\definecolor{verylight}{gray}{0.97}
\definecolor{light}{gray}{0.93}
\definecolor{medium}{gray}{0.82}
 \def\opn#1#2{\def#1{\operatorname{#2}}} 
 \opn\chara{char} \opn\length{\ell} \opn\pd{pd} \opn\rk{rk}
 \opn\projdim{proj\,dim} \opn\injdim{inj\,dim} \opn\rank{rank}
 \opn\depth{depth} \opn\grade{grade} \opn\height{height}
 \opn\embdim{emb\,dim} \opn\codim{codim}
 \opn\hreg{hreg}
 \opn\Tr{Tr} \opn\bigrank{big\,rank}
 \opn\superheight{superheight}\opn\lcm{lcm}
 \opn\trdeg{tr\,deg}
 \opn\reg{reg} \opn\lreg{lreg} \opn\ini{in} \opn\lpd{lpd}
 \opn\size{size} \opn\sdepth{sdepth}
 \opn\link{link}\opn\fdepth{fdepth}\opn\lex{lex}
 \opn\div{div} \opn\Div{Div} \opn\cl{cl} \opn\Cl{Cl}
 \opn\Spec{Spec} \opn\Supp{Supp} \opn\supp{supp} \opn\Sing{Sing}
 \opn\Ass{Ass} \opn\Min{Min}\opn\Mon{Mon}
 \opn\Ann{Ann} \opn\Rad{Rad} \opn\Soc{Soc}
 \opn\Im{Im} \opn\Ker{Ker} \opn\Coker{Coker} \opn\Am{Am}
 \opn\Hom{Hom} \opn\Tor{Tor} \opn\Ext{Ext} \opn\End{End}
 \opn\Aut{Aut} \opn\id{id}
 \opn\nat{nat}
 \opn\pff{pf}
 \opn\d{d} \opn\GL{GL} \opn\SL{SL} \opn\mod{mod} \opn\ord{ord}
 \opn\Gin{Gin} \opn\Hilb{Hilb}\opn\sort{sort}
 \opn\aff{aff} \opn\a{a}
 \opn\con{conv} \opn\relint{relint} \opn\st{st}
 \opn\lk{lk} \opn\cn{cn} \opn\core{core} \opn\vol{vol}
 \opn\link{link} \opn\star{star}\opn\lex{lex}\opn\set{set}
 \opn\diam{diam}
 \opn\gr{gr}
 \def\pot#1#2{#1[\kern-0.28ex[#2]\kern-0.28ex]}
 \opn\dirlim{\underrightarrow{\lim}}
 \opn\inivlim{\underleftarrow{\lim}}
 \def\Implies{\ifmmode\Longrightarrow \else
         \unskip${}\Longrightarrow{}$\ignorespaces\fi}
 \def\implies{\ifmmode\Rightarrow \else
         \unskip${}\Rightarrow{}$\ignorespaces\fi}
 \def\iff{\ifmmode\Longleftrightarrow \else
         \unskip${}\Longleftrightarrow{}$\ignorespaces\fi}
 \newtheorem{Theorem}{Theorem}[section]
 \newtheorem{Lemma}[Theorem]{Lemma}
 \newtheorem{Corollary}[Theorem]{Corollary}
 \newtheorem{Proposition}[Theorem]{Proposition}
 \newtheorem{Remark}[Theorem]{Remark}
 \newtheorem{Example}[Theorem]{Example}
 \newtheorem{Definition}[Theorem]{Definition}
 \let\epsilon\varepsilon
 \let\kappa=\varkappa
 \def\qed{\ifhmode\textqed\fi
       \ifmmode\ifinner\quad\qedsymbol\else\dispqed\fi\fi}
 \def\textqed{\unskip\nobreak\penalty50
        \hskip2em\hbox{}\nobreak\hfil\qedsymbol
        \parfillskip=0pt \finalhyphendemerits=0}
 \def\dispqed{\rlap{\qquad\qedsymbol}}
 \opn\dis{dis}
 \def\pnt{{\raise0.5mm\hbox{\large\bf.}}}
 \opn\Lex{Lex}
\begin{document}

 \title{ Linear syzygy graph and linear resolution}

\author{ Erfan manouchehri}
\address{ Erfan manouchehri,
Department of Mathematics, University of Kurdistan,
P.O.Box:66177-15175, Sanadaj, Iran}
\email{erfanm6790@yahoo.com}

\author{Ali Soleyman Jahan}
\address{Ali Soleyman Jahan, Ali Soleyman Jahan,
Department of Mathematics, University of Kurdistan,
P.O.Box:66177-15175, Sanadaj, Iran}
\email{A.solaimanjahan@uok.ac.ir}

\date{}
\begin{abstract}
For each squarefree monomial ideal $I\subset S = k[x_{1},\ldots, x_{n}] $, we associate a simple graph $G_I$ by using the first
linear syzygies of $I$. In cases, where $G_I$ is a cycle
or a tree, we show the following are equivalent:\\$(a)$ \;$I$ has a linear resolution;\\
$(b)$\; $I$ has linear quotients;\\$(c)$\; $I$ is variable-decomposable.\\  In addition,
with the same assumption on $G_I$, we characterize all monomial ideals
with a linear resolution. Using our results, we characterize all Cohen-Macaulay
codimension $2$ monomial ideals with a linear resolution. As an other application of our results, we also characterize  all Cohen-Macaulay simplicail complexes in cases that $G_{\Delta}\cong G_{I_{\Delta^{\vee}}}$ is  a cycle or a tree.

\keywords{Monomial ideal \and Linear resolution \and Linear quotients \and Variable-decomposability\and Cohen-Macaulay simplicial complexes}
\end{abstract}

 \maketitle

\section*{Introduction}
\label{intro}

Let $S = k[x_{1},\ldots,x_{n}] $ be the polynomial ring in $n$
variables over a field $k$ and $I$ be a monomial ideal in $S$. We
say that $I$ has a $d$-linear resolution if the graded minimal free
resolution of $I$ is of the form: $$ 0 \longrightarrow
S(-d-p)^{\beta_p} \cdots \longrightarrow S(-d-1)^{\beta_1}
\longrightarrow S(-d)^{\beta_0} \longrightarrow I \longrightarrow 0.
$$ In general it is not  easy to find ideals with linear
resolution. Note that the free resolution of a monomial ideal and,
hence, its linearity depends in general on the characteristic of the
base field.

Let $I \subseteq S$ be a monomial ideal. We denote by $G(I)$ the
unique minimal monomial set of generators of $I$. We say that $I$
has linear quotients if there exists an order $\sigma = u_1,
\ldots, u_m$ of $G(I)$ such that the colon ideal $<u_1, \ldots,
u_{i-1}> : u_i$ is generated by a subset of the variables, for $i =
2, \ldots, m$. Any order of the generators for which, $I$ has linear
quotients, will be called an admissible order. Ideals with linear
quotients were introduced by Herzog and Takayama \cite{HT}.   Note
that linear quotients is  purely
combinatorial property of an ideal $I$ and, hence, does not depend on the
characteristic of the base field. Suppose that $I$ is a graded ideal
generated in degree $d$. It is known that if $I$ has linear
quotients, then $I$ has a $d$-linear resolution \cite[Proposition
8.2.1]{HH}.

The concept of variable-decomposable monomial ideal was first
introduced by Rahmati and Yassemi \cite{RY} as a dual concept of
 vertex-decomposable simplicial complexes. In case that $I=I_{\Delta^{\vee}}$, they proved that $I$
 is variable-decomposable if and only if $\Delta$ is vertex-decomposable. Also they proved   if a
 monomial ideal $I$ is variable-decomposable, then it has linear
 quotients. Hence for monomial ideal generated in one degree, we
 have the following implications:\\
 $I$ is variable-decomposable $\Longrightarrow \; I$ has linear
 quotients $\Longrightarrow \; I$ has a linear resolution.\\
 However, there are ideals with linear resolution but
without linear quotients, see \cite{CH}, and  ideals with
linear quotients which are  not variable-decomposable, see \cite[Example 2.24]{RY}.

The problem of existing $2$-linear resolution is completely solved
by Fr\"{o}berg \cite{Fro} (See also \cite{MO}). Any ideal of $S$
which is generated by squarefree monomials of degree $2$ can be
assumed as edge ideal of a simple graph. Fr\"{o}berg proved that the
edge ideal of a finite simple graph $G$ has a linear resolution if and
only if the complementary graph $\bar{G}$ of $G$ is chordal. Trying to generalize the result of Fr\"{o}berg for monomial
ideals generated in degree $d$, $d\geq 3$, is an interesting
problem on which several mathematicians including E. Emtander \cite{E} and
R.Woodroofe \cite{W}
have worked.

It is known that monomial ideals with 2-linear resolution have
linear quotients \cite{HeHiZ}.  Let $I=I_{\Delta^{\vee}}$ be a
squarefree monomial ideal generated in degree $d$ which  has a
linear resolution. By a result of Eagon-Reiner \cite{ER}, we know
 $\Delta$ is a Cohen-Macaulay of dimension $n-d$ . In \cite{AS}
Soleyman Jahan and Ajdani proved  if $\Delta$ is a
Cohen-Macaulay simplicial complex of codimension $2$, then $\Delta$
is vertex-decomposable. Hence, by  \cite[Theorem 2.10]{RY},
$I_{\Delta^{\vee}}$  is a variable-decomposable monomial ideal
generated in degree $2$.  Therefore  if $I=I(G)$ is the edge ideal of a simple graph $G$, then  the following are equivalent:
\begin{itemize}
\item[(a)] $I$ has a linear resolution;
\item[(b)] $I$ has linear quotients;
\item[(c)] $I$ is variable-decomposable ideal.
\end{itemize}

 So it is natural to look for some other classes of
monomial ideals with the same property.

The paper proceeds as follows.  In Section \ref{sec:1}, we associated a simple graph $G_I$ to a squarefree monomial ideal $I$ generated in
degree $d\geq 2$. In Theorem
\ref{02}, we show that if $G_I\cong C_m$, $m\geq 4$, then $I$ has a
linear resolution if and only if it has linear quotients and it is equivalent
to $I$ is a variable-decomposable. With the same assumption on $G_I$, we characterize all  monomial ideals with a linear resolution.

In Section \ref{sec:2}, we
consider monomial ideal $I$ where $G_I$ is a tree. We prove that if $I$
has linear relations, then $G_I$ is a tree if and only if
$\projdim(I)=1$ (see Theorem \ref{01}).  In Theorem \ref{tree} we show that if $G_I$ is a tree, then the following are equivalent:
\begin{itemize}
\item[(a)]
$I$ has a linear resolution;
\item[(b)]
$I$ has  linear relations;
\item[(c)]
$G_I^{(u,v)}$ is a connected graph for all $u$, and $v$ in $G(I)$;
\item[(d)]
If $u=u_1,u_2,\ldots,u_s=v$ is the unique path  between $u$ and $v$ in $G_I$, then $F(u_j)\subset F(u_i)\cup F(u_k)$  for all $1\leq i\leq j\leq k \leq s$;
\item[(e)]
 $L$ has a linear  resolution for all $L \subseteq I$, where $G(L)\subset G(I)$ and $G_L$ is a line
\end{itemize}
In addition, it is shown
that $I$ has a linear resolution if and only if it has linear
quotients and if and only if it is variable-decomposable, provided that
$G_I$ is a tree (see Theorem \ref{019}).

Let $\Delta_I$ be the Scarf
complex of $I$. In Theorem \ref {011} we prove that in the case that $G_I$ is a tree, $I$
has a linear resolution if and only if $G_I\cong \Delta_I$.

In Section \ref{sec:3}, as applications of our results in Corollary \ref{cm2}, we characterize all Cohen-Macaulay monomial ideals of codimension $2$ with a linear resolution.  Let  $t\geq 2$ and $I_t(C_n)$ ($I_t(L_n)$) be the path ideal of length $t$ for $n$-cycle $C_n$ ( $n$-line $L_n$). We show that $I_t(C_n)$ ($I_t(L_n)$ has a linear resolution if and only if $t=n-2$ or $t=n-1$ ($t\geq n/2$), see Corollary \ref{cn} and  Corollary \ref{Ln}.

Finally, we consider simplicial complex $\Delta=\langle F_1,\ldots, F_m\rangle$. It is shown that  $\Delta$ is connected in codimension one if and only if $G_{I_{\Delta^{\vee}}}$ is a connected graph, see Lemma \ref{cd1-c}. In Corollary \ref{delta fg}, we show that $I_{\Delta^{\vee}}$ has linear relations if and only if $\Delta^{(F,G)}$ is connected in codimension one for all facets $F$ and $G$ of $\Delta$. Also, we introduce a simple graph $G_{\Delta}$ on vertex set $\{F_1,\ldots,F_m\}$ which is isomorphic to $G_{I_{\Delta^{\vee}}}$. As Corollaries of our results, we show that  if $G_{\Delta}$ is  a cycle or a tree, then the following are equivalent:
 \begin{enumerate}
 \item[(a)] \; $\Delta$ is Cohen-Macaulay;
 \item[(b)]\; $\Delta$ is pure shellable;
 \item[(c)]\; $ \Delta$ is pure vertex-decomposable.
 \end{enumerate}
 In addition, with the same assumption on $G_{\Delta} $ all Cohen-Macaulay simplicial complexes are characterized.

 Note that for monomial ideal $I=<u_1, \ldots,
u_m>$ and monomial $u$ in $S$, $I$ has a linear resolution (has linear quotients,  is variable-decomposable) if and only if
$uI$ has a linear resolution (has linear quotients,  is variable-decomposable). Hence, without the loss of generality, we
assume that $\gcd(u_i:\; u_i\in G(I))=1$. Also, one can see that a
monomial ideal $I$ has a linear resolution (has linear quotients, is variable-decomposable) if and
only if its polarization has a linear resolution (has linear quotients, is variable-decomposable).
Therefore in this paper we only consider squarefree monomial
ideals.

\section{ monomial ideals whose  $G_I$ is a cycle}
\label{sec:1}
Let $I$ be a monomial ideal which is generated in one degree. First, we  recalling  some definitions and known facts which  will be useful later.

\begin{Proposition}\label{qouresu}\cite[Proposition
8.2.1]{HH} {\em
Suppose $I \subseteq S $ is a monomial ideal generated in degree $d$.
If $I$ has linear quotients, then $I$ has a $d$-linear
resolution.}
\end{Proposition}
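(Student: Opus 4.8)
The plan is to induct on the number $m$ of elements in an admissible order $u_1,\dots,u_m$ of $G(I)$, building the resolution one generator at a time and controlling the Castelnuovo--Mumford regularity at each step. I would first record the standard reformulation: since $I$ is generated in a single degree $d$, it has a $d$-linear resolution precisely when $\reg(I)=d$, equivalently $\reg(S/I)=d-1$; this is the invariant to track throughout the induction.

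For $1\le j\le m$ set $I_j=(u_1,\dots,u_j)$ and let $L_j=I_{j-1}:u_j$, which by the linear-quotients hypothesis is generated by a subset of the variables. The engine of the argument is the short exact sequence of graded modules
\[
0 \longrightarrow (S/L_j)(-d) \xrightarrow{\ \cdot u_j\ } S/I_{j-1} \longrightarrow S/I_j \longrightarrow 0,
\]
obtained by identifying $I_j/I_{j-1}$ with $(S/L_j)(-d)$ via multiplication by $u_j$: the map $S(-d)\to S/I_{j-1}$, $f\mapsto fu_j$, has image $I_j/I_{j-1}$ and kernel exactly $I_{j-1}:u_j=L_j$, and the shift by $-d$ records $\deg u_j=d$.

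Into this sequence I would feed two regularity inputs. First, because $L_j$ is generated by variables, $S/L_j$ is, up to renaming variables, a polynomial ring resolved by a Koszul complex; hence $\reg(S/L_j)=0$ and $\reg\big((S/L_j)(-d)\big)=d$. Second, by the inductive hypothesis $I_{j-1}$ has a $d$-linear resolution, so $\reg(S/I_{j-1})=d-1$ (the base case $j=1$ is the principal ideal $(u_1)$, whose resolution $0\to S(-d)\to S\to S/I_1\to 0$ is visibly linear). Applying the standard estimate $\reg(C)\le\max\{\reg(B),\,\reg(A)-1\}$ for a short exact sequence $0\to A\to B\to C\to 0$ gives $\reg(S/I_j)\le\max\{d-1,\,d-1\}=d-1$. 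Since $I_j$ is generated in degree $d$ one always has $\reg(S/I_j)\ge d-1$, so equality holds and $I_j$ has a $d$-linear resolution. Taking $j=m$ completes the induction.

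The step carrying the real content is the regularity bookkeeping: one must invoke the correct inequality relating the three terms of the short exact sequence, keep the degree shift straight, and confirm that $L_j$ being generated by a subset of variables forces $\reg(S/L_j)=0$. An alternative route that sidesteps regularity is the iterated mapping cone: the Koszul complex resolving $S/L_j$, shifted by multiplication by $u_j$, maps to the resolution of $I_{j-1}$, and the mapping cone resolves $I_j$; there the main obstacle instead becomes verifying minimality and that every comparison map is represented by linear forms, which is precisely what makes the resulting resolution $d$-linear.
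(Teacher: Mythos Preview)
The paper does not supply its own proof of this proposition: it is stated with a citation to \cite[Proposition~8.2.1]{HH} and used as a known black box. So there is nothing in the paper to compare against.

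That said, your argument is correct and is essentially the standard proof one finds in the cited reference. The short exact sequence
\[
0 \longrightarrow (S/L_j)(-d) \xrightarrow{\ \cdot u_j\ } S/I_{j-1} \longrightarrow S/I_j \longrightarrow 0
\]
is set up correctly, the Koszul input $\reg(S/L_j)=0$ is valid because $L_j$ is generated by a regular sequence of variables, and the regularity inequality $\reg(C)\le\max\{\reg(B),\reg(A)-1\}$ is applied with the right shifts. The induction closes as you describe. Your closing remark about the mapping-cone alternative is also accurate; that route yields an explicit (in general non-minimal) resolution rather than just the regularity bound, but for the purpose of this proposition either approach suffices.
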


Let $u = x_1^{a_1} \ldots x_n^{a_n}$ be a monomial in $S$. Set $F(u):=\{i\; :\; a_i>0\}=\{i\; :\; x_i\mid u\}$. For
another monomial $v$, we set $[u, v] = 1$ if $x_i^{a_i} \nmid v $
for all $i \in F(u)$. Otherwise, we set $[u, v] \neq 1$. For a
 a monomial ideal $I \subseteq S$, set $I_u=<
u_i \in G(I) :\ [u, u_i] = 1>$ and $I^u = < u_j \in G(I) :\ [u, u_j]
\neq 1>$.

\begin{Definition}{\em
Let $I$ be a monomial ideal with
$G(I)=\{u_1, \ldots, u_m \}$. A monomial $u =x_1^{a_1} \ldots x_n^{a_n}$
 is called shedding if $I_u \neq 0$ and for each
$u_i \in G(I_u)$ and $l \in F(u)$, there exists $u_j \in G(I^u)$
such that $u_j:u_i=x_l$.  Monomial ideal  $I$ is  $r$-decomposable  if $m = 1$ or
else has a shedding monomial $u$ with $\mid F(u)\mid \leq r + 1$
such that the ideals $I_u$ and $I^u$ are $r$-decomposable.}
\end{Definition}

A monomial ideal is decomposable if it is $r$-decomposable for some
$r \geq 0$ . A $0$-decomposable ideal is called
variable-decomposable. In \cite{RY} the authors proved the following result:

\begin{Theorem}\label{decqou}{\em
 Let $I$ be a monomial ideal with $ G(I) =
\{u_1, \ldots, u_m \}$. Then $I$ is decomposable if and only if it
has linear quotients.}
\end{Theorem}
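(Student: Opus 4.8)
The plan is to prove both implications by induction on the number of generators $m=|G(I)|$, after reducing to the squarefree case as permitted by the remarks in the Introduction. The base case $m=1$ is immediate, since a principal ideal is $0$-decomposable and trivially has linear quotients. Throughout I use that for any monomial $u$ the generating set splits as $G(I)=G(I_u)\sqcup G(I^u)$, because each $u_i$ satisfies exactly one of $[u,u_i]=1$ or $[u,u_i]\neq 1$.

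For the direction that decomposability forces linear quotients, let $u$ be a shedding monomial with $I_u$ and $I^u$ both decomposable; since $u$ is shedding, $I_u\neq 0$, and the shedding condition forces $I^u\neq 0$ as well, so each of $I_u,I^u$ has strictly fewer than $m$ generators and admits an admissible order by induction. I would then order $G(I)$ by listing $G(I^u)$ first in its admissible order and $G(I_u)$ afterwards in its admissible order. For a generator $w\in G(I_u)$, the colon of $w$ against all earlier generators decomposes as $(I^u:w)+(\langle\text{earlier elements of }G(I_u)\rangle:w)$; the second summand is generated by variables by the inductive order on $I_u$. For the first summand, the shedding condition gives $x_l\in I^u:w$ for every $l\in F(u)$, while each generator $u_j:w$ of $I^u:w$ is divisible by some $x_i$ with $i\in F(u)$, because $u_j\in G(I^u)$ contains such a variable $x_i$ which does not divide $w$ (as $[u,w]=1$). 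Hence $I^u:w=\langle x_l:l\in F(u)\rangle$, so the whole colon is generated by variables, giving linear quotients.

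For the converse, suppose $I$ has linear quotients with admissible order $u_1,\ldots,u_m$. Writing $\langle u_1,\ldots,u_{m-1}\rangle:u_m=\langle x_l:l\in B\rangle$ with $B\neq\emptyset$, I would take the shedding monomial to be $u=\prod_{l\in B}x_l$, and the plan is to show it isolates the last generator: $G(I_u)=\{u_m\}$ and $G(I^u)=\{u_1,\ldots,u_{m-1}\}$. Using that the minimal generators of a monomial colon ideal occur among the $u_j:u_m$, each $x_l$ ($l\in B$) equals some $u_j:u_m$, which in the squarefree setting forces $x_l\nmid u_m$; thus $[u,u_m]=1$ and $u_m\in G(I_u)$. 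Conversely, for $i<m$ the element $u_i:u_m$ lies in the colon and is nontrivial (as $u_m$ is a minimal generator), hence is divisible by some $x_l$ with $l\in B$, so $x_l\mid u_i$ and $[u,u_i]\neq 1$; thus every earlier $u_i$ lies in $G(I^u)$. The shedding condition is then precisely that for the unique $u_m\in G(I_u)$ and each $l\in F(u)=B$ there is $u_j\in G(I^u)$ with $u_j:u_m=x_l$, which holds since $x_l$ is a minimal generator of the colon. As $I^u=\langle u_1,\ldots,u_{m-1}\rangle$ inherits linear quotients as a prefix of the admissible order, it is decomposable by induction, and $I_u=\langle u_m\rangle$ is principal; hence $I$ is decomposable.

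The main obstacle I anticipate lies in this converse: choosing the correct shedding monomial and proving it splits off exactly the last generator. The key realization is that the support $B$ of the final colon ideal is the right support for $u$; the delicate points are verifying $x_l\nmid u_m$ for $l\in B$ (so that $u_m$ truly falls into $I_u$) and that every earlier generator meets $B$ (so that nothing else does). Note that when $|B|=1$ the shedding monomial is a single variable, recovering variable-decomposability, whereas $|B|\geq 2$ is exactly what permits decomposable ideals that are not variable-decomposable. The non-squarefree case is absorbed by the reduction to squarefree ideals from the Introduction, at the cost of adjusting the exponents appearing in $u$.
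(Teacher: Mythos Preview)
The paper does not actually prove Theorem~\ref{decqou}; it is quoted from Rahmati--Yassemi \cite{RY} and only cited, so there is no in-paper proof to compare against. Your argument is essentially the standard one from \cite{RY}: in one direction, concatenate admissible orders on $I^u$ and $I_u$, using the shedding condition to show that for $w\in G(I_u)$ the colon $I^u:w$ equals $(x_l:l\in F(u))$; in the other, peel off the last element of an admissible order by taking $u$ with support equal to the variable generators of the final colon, so that $I_u=\langle u_m\rangle$ and $I^u$ is the prefix ideal. Both halves are correct as written in the squarefree case.

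One small caveat: your reduction to the squarefree case leans on the claim that decomposability is preserved under polarization, but the Introduction of the paper only asserts this for \emph{variable}-decomposability (the $r=0$ case). The general statement is still true and is proved in \cite{RY}, so this is not a genuine gap, but you should either cite it or note that the argument can be run directly in the non-squarefree setting by taking $u=\prod_{l\in B}x_l^{a_l}$ with $a_l$ one more than the $x_l$-exponent of $u_m$, which is exactly what your last sentence hints at.
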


Let $I$ be a squarefree monomial  ideal and
\[
F: \; 0 \longrightarrow F_p \cdots \longrightarrow F_1
\longrightarrow F_0 \longrightarrow I \longrightarrow 0
\]
be the minimal graded free $S -$resolution of $I$, where $
F_i=\bigoplus_{j}S(-j)^{\beta_{ij}} $ for all $i$. Set $\varphi :\; F_0 \longrightarrow I$    and  $ \psi :
F_{1}\longrightarrow F_{0}$, where  $\varphi$ maps a basis element $e_i$ of $F_0$ to $u_i\in G(I)$ and $\psi$  maps a basis element $g_i$  of  $F_1$  to an element of a  minimal  generating set of
$\ker(\varphi)$. Monomial ideal  $I$ has {\em linear relations } if $\ker(\varphi)$ is generated minimally by a set of linear forms.

We associate to $I$ a simple
graph $G_{I}$  whose vertices are
labeled by the elements of $G(I)$. Two vertices $u_i$ and $u_j$ are
adjacent if there exist variables $x, y$ such that $xu_i=yu_j$. This
graph was first introduced by Bigdeli, Herzog and Zaare-Nahandi
\cite{BHZ}.

\begin{Remark} {\em  If $I$ is a squarefree monomial ideal, then two type of  $3$-cycle $u_{i_1}, u_{i_2}, u_{i_3}$  may
appear in $G_I$.

 $(i)$:  If $ F(u_{i_1})=A \cup \{j, k\}, F(u_{i_2})=A \cup \{i, k\} $
  and $ F(u_{i_3})=A \cup \{i, j\} $.
Then
 $ x_ie_{i_1}-x_ke_{i_3}=(x_ie_{i_1}-x_je_{i_2})+(x_je_{i_2}-x_ke_{i_3})$.
 In this case one of the linear forms can be written as a linear combination of two other linear forms.

 $(ii)$: If $ F(u_{i_1})=A \cup \{i\}, F(u_{i_2})=A \cup \{j\} $ and
$ F(u_{i_3})=A \cup \{k\} $. In this case  the three
linear forms   are independent.}
\end{Remark}

The number of the minimal generating set of $ \ker(\varphi) $ in degree $d+1$ is $\beta_{1(d+1)}$ and   $\beta_{1(d+1)} \leq \mid E(G_I) \mid$. It is clear that  equality holds if $G_{I}  $ has no  $ C_{3} $ of type $(i)$. If $G_I$ has  a $C_3$ of type $(i)$, then we remove one  edge  of this cycle.
In this way, we obtain a graph  $G_I$ with no $C_3$ of type $(i)$ and called  it the first syzygies graph of $I$.

Our aim is to study minimal free resolution of $I$ via
some combinatorial properties of $G_I$.  Set $
x_{F}:=\prod_{i \in F} x_{i} $ for each $  F\subset [n]=\{1,\ldots,n\} $.
\begin{Remark}\label{001}{\em
 Let $I$ be a squarefree monomial ideal. If  $u_i=x_{F_i}$ and $u_j=x_{F_j}$ are two elements in $G(I)$ such that  $w_i u_i
 =w_ju_j$, then there exists a
 monomial $w\in S$ such that
 $w_i=wx_{F_j \setminus  F_i}$ and $w_j=wx_{F_i\setminus  F_j}$.}
\end{Remark}

\begin{Lemma} \label{002}{\em
Let $I$ be squarefree monomial ideal.   If there is a path of length $t$ between $u$ and $v$ in $G_I$, then
one can obtain  monomials $w_{i}$ and $w_{j}$ from the given path
such that $w_{i}u =w_{j}v$ and $\deg w_{i}$=$\deg w_{j} \leq t.$}
\end{Lemma}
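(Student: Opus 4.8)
The plan is to induct on the length $t$ of the path. Write the given path as $u=u_{(0)},u_{(1)},\ldots,u_{(t)}=v$, so that consecutive vertices $u_{(k-1)}$ and $u_{(k)}$ are adjacent in $G_I$. The first thing I would pin down is the shape of a single edge: by the definition of $G_I$, adjacency of $u_{(k-1)}$ and $u_{(k)}$ means $x\,u_{(k-1)}=y\,u_{(k)}$ for some variables $x,y$, and by Remark~\ref{001} applied to this relation (using that both generators are squarefree of the same degree $d$) the two generators differ by swapping a single variable. This is where the standing hypotheses enter, and it guarantees that each edge contributes exactly one variable on each side.

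For the base case $t=1$, the vertices $u$ and $v$ are adjacent, so there are variables $x,y$ with $xu=yv$; taking $w_i=x$ and $w_j=y$ gives $\deg w_i=\deg w_j=1\le 1$. For the inductive step I would apply the induction hypothesis to the subpath $u_{(0)},\ldots,u_{(t-1)}$ of length $t-1$, obtaining monomials $a,b$ with $au=b\,u_{(t-1)}$ and $\deg a=\deg b\le t-1$. Since $u_{(t-1)}$ and $v=u_{(t)}$ are adjacent, choose variables $x,y$ with $x\,u_{(t-1)}=yv$. Multiplying the first relation by $x$ and telescoping gives $(ax)u=x(au)=x(b\,u_{(t-1)})=b(x\,u_{(t-1)})=b(yv)=(by)v$, so $w_i=ax$ and $w_j=by$ satisfy $w_iu=w_jv$ with $\deg w_i=\deg a+1\le t$ and likewise $\deg w_j\le t$. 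The equality $\deg w_i=\deg w_j$ need not be tracked separately: since every element of $G(I)$ has the same degree $d$, the relation $w_iu=w_jv$ forces $\deg w_i+d=\deg w_j+d$, hence $\deg w_i=\deg w_j$.

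An equivalent and perhaps cleaner packaging uses the least common multiple: set $m_k=\lcm(u_{(0)},\ldots,u_{(k)})$. Because each $u_{(k)}$ differs from $u_{(k-1)}$ in a single variable, passing from $m_{k-1}$ to $m_k$ raises the degree by at most one, so $\deg m_t\le d+t$; one then takes $w_i=m_t/u$ and $w_j=m_t/v$, for which $w_iu=m_t=w_jv$ and $\deg w_i=\deg m_t-d\le t$. I do not expect a genuine obstacle here: the argument is a telescoping along the path, and the only point demanding care is the degree bookkeeping, namely confirming via Remark~\ref{001} that an edge of $G_I$ between two squarefree degree-$d$ generators is exactly a single-variable swap, so that each of the $t$ edges inflates the multiplier degree by precisely one.
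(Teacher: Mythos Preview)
Your proof is correct and follows essentially the same route as the paper: induction on $t$, with the inductive step appending the final edge to the multipliers obtained from the length-$(t-1)$ subpath. The paper's proof is slightly less polished (it writes out the case $t=2$ separately, which your argument absorbs into the general step), and your remark that $\deg w_i=\deg w_j$ is automatic from $w_iu=w_jv$ once $\deg u=\deg v$ is a nice touch the paper omits; the alternative $\lcm$ packaging you offer is not in the paper but is equivalent.
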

\begin{proof}
We proceed by induction on $t$. The case $t=1$ is obvious. Let
$t=2$  and  $u,w,v$ be a path of length $2$ in $G_I$.
Since $u$ and $w$ and $w$ and $v$ are adjacent, we have $x_{i_1}u=x_{i_2}w$ and
 ${x_i}_{3}w={x_i}_{4}v$. Hence $x_{i_1}x_{i_3}u=x_{i_2}x_{i_4}v$.

Now assume that $t>2$ and  $u=u_{i_0},
u_{i_1}, \ldots, u_{i_{t-1}}, u_{i_t}=v$ is a path of length $t$.
Hence $u=u_{i_0}, u_{i_1}, \ldots, u_{i_{t-1}}$ is a path of length
$t-1$. Using induction hypothesis, we conclude that there are monomials $w^{'}_{i}$ and $w^{'}_{j}$ such that
  $w^{'}_{i}u =w^{'}_{j}u_{i_{t-1}}$, where $\deg w^{'}_{i}=\deg w^{'}_{j} \leq t-1$.
  Since $ v $ and $u_{i_{t-1}}  $ are adjacent, there exist variable $x, y$ such that
  $xu_{i_{t-1}}=yv$.
 Therefore  $ xw^{'}_{i}u =yw^{'}_{j}v$ and
$\deg xw^{'}_{i}=\deg yw^{'}_{j} \leq t$.
 \end{proof}

The following example shows that the inequality $\deg w_{i}$=$\deg
w_{j} \leq k$ can  be pretty strict.

 \begin{Example}\label{1}{\em
  Consider  monomial ideal
 $I=<u, v, w, z>\subset k[x_1,\ldots,x_5]$, where
 $u=x_{1}x_{2}x_{3}$, $w=x_{1}x_{2}x_{4}$, $z=x_{1}x_{4}x_{5}$ and $v=x_{3}x_{4}x_{5}$.
 We have a path of length $3$ between $u$ and $v$, but  $x_{4}x_{5}u=x_{1}x_{2}v.$

 \psset{unit=0.15cm} \begin{pspicture}(5,0)(-35,15)
\put(2,10){$_{\bullet}$} \put(0.2,10){$u$} \put(8,10){$_{\bullet}$}
\put(7.7,11){$w$} \put(14,10){$_{\bullet}$} \put(14,11){$z$}
\put(20,10){$_{\bullet}$} \put(21,10){$v$}
\psline(2.2,10)(8,10)
\psline(14,10)(8,10)
\psline(14,10)(20.2,10)
\end{pspicture}}
\end{Example}

\begin{Lemma} \label{003}{\em
Let $I$ be  squarefree monomial ideal which has  linear relations.
Then $G_{I}$ is a connected graph.}
\end{Lemma}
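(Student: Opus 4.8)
The plan is to argue by contradiction: I would assume that $G_I$ is disconnected and show this is incompatible with $I$ having linear relations, because a disconnection would force $I$ to split as a nontrivial direct sum of two ideals, which cannot happen inside the domain $S$.

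First I would record what the hypothesis buys us concretely. Since $I$ is a monomial ideal, $\ker(\varphi)$ is generated by the two-term syzygies $\sigma_{ij}=(\lcm(u_i,u_j)/u_i)e_i-(\lcm(u_i,u_j)/u_j)e_j$, and a degree-$(d{+}1)$ element $\sum_i \ell_i e_i$ with the $\ell_i$ linear satisfies $\sum_{i,x}c_{ix}(xu_i)=0$; grouping the monomials $xu_i$ that coincide (each coincidence $xu_i=yu_j$ being an edge), one sees that the $k$-space of linear syzygies is spanned by the edge syzygies $xe_i-ye_j$ with $xu_i=yu_j$. This is exactly the content of the remark on linear syzygies and the count $\beta_{1(d+1)}\le|E(G_I)|$ preceding Lemma \ref{002}. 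Hence ``$\ker(\varphi)$ minimally generated by linear forms'' means $\ker(\varphi)$ is generated by syzygies, each supported on a single edge of $G_I$.

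Next, suppose $G_I$ is disconnected and split $G(I)=A\sqcup B$ into two nonempty unions of connected components, with the corresponding splitting $F_0=F_0^A\oplus F_0^B$ where $F_0^A=\bigoplus_{u_i\in A}Se_i$. Since no edge joins $A$ to $B$, every edge syzygy lies in $F_0^A$ or in $F_0^B$, so by the previous step $\ker(\varphi)=K_A\oplus K_B$ with $K_A=\ker(\varphi)\cap F_0^A$ and $K_B=\ker(\varphi)\cap F_0^B$. Writing $I_A=\langle u_i:u_i\in A\rangle$ and $I_B=\langle u_i:u_i\in B\rangle$, restriction of $\varphi$ gives $F_0^A/K_A\cong I_A$ and $F_0^B/K_B\cong I_B$, whence
\[
I\cong F_0/\ker(\varphi)\cong (F_0^A/K_A)\oplus(F_0^B/K_B)\cong I_A\oplus I_B;
\]
tracing this isomorphism shows the induced map $I_A\oplus I_B\to I$ is simply $(a,b)\mapsto a+b$, and its injectivity forces $I_A\cap I_B=0$.

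Finally I would derive the contradiction: $I_A$ and $I_B$ are nonzero ideals of the integral domain $S$, so for $u_i\in A$ and $u_j\in B$ the monomial $u_iu_j$ is a nonzero element of $I_A\cap I_B$, contradicting $I_A\cap I_B=0$. Therefore $G_I$ is connected. The step I expect to be the main obstacle is the first one: making precise that ``minimally generated by linear forms'' really delivers generators supported on single edges, so that the component partition genuinely splits $\ker(\varphi)$, rather than producing syzygies whose linear-form coefficients secretly mix the two blocks. The remark on $3$-cycles of type $(i)$ and the bound $\beta_{1(d+1)}\le|E(G_I)|$ are exactly the tools needed to control this.
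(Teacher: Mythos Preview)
Your proof is correct but takes a genuinely different route from the paper's. The paper argues directly: for any two generators $u_i,u_j$ one has $w_ie_i-w_je_j\in\ker(\varphi)$ for suitable monomials $w_i,w_j$; since $\ker(\varphi)$ is generated by linear (edge) syzygies, this element is an $S$-linear combination of such, and tracking which basis vectors $e_{i_r}$ occur one reads off a walk $u_i,u_{i_2},\ldots,u_{i_t},u_j$ in $G_I$. Your argument is instead by contradiction and module-theoretic: a component partition $A\sqcup B$ forces $\ker(\varphi)$ to split along $F_0^A\oplus F_0^B$, hence $I\cong I_A\oplus I_B$, impossible since $S$ is a domain.

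Both hinge on the same preliminary fact, that the degree-$(d{+}1)$ syzygies are spanned by single-edge syzygies; you justify this carefully, while the paper treats it as implicit in the construction of $G_I$ and the bound $\beta_{1(d+1)}\le |E(G_I)|$. The paper's version has the advantage of being constructive: it actually produces a path, and this path-extraction is reused verbatim in the proofs of Proposition~\ref{009}, Lemma~\ref{005} and elsewhere. Your version has the advantage of cleanly sidestepping the somewhat elided step in the paper, namely why the edge syzygies appearing in the linear combination must connect $i$ to $j$; you replace that by the transparent observation $I_A\cap I_B\neq 0$.
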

\begin{proof}
For  any   $u_{i}, u_{j}\in G(I)$, there exist
monomials $w_{i}$ and $w_{j}$ such that $w_{i}u_{i} =w_{j}u_{j}$ and,
hence, $w_{i}e_{i} -w_{j}e_{j} \in \ker(\varphi)$ . Since  $
\ker(\varphi) $ is generated by linear forms one has :
$$w_i e_i -
w_j e_j =f_{i_1}(x_{k_1}e_i-{x_{k_2}}^{'}e_{i_2})+
f_{i_2}(x_{k_2}e_{i_2}-{x_{k_3}}^{'}e_{i_3})+ \ldots+
f_{i_t}(x_{k_t}e_{i_t}-{x_{k_{t+1}}}^{'}e_{j}),$$ where $f_{ij}\in
S$ for $j=0,\ldots, t$. Therefore $u_{i},u_{i_2},\ldots,u_{i_t},
u_{j}$ is a path in $G_I$.
\end{proof}

The following example shows that the converse of Lemma \ref{003} is
not true in general.
\begin{Example} {\em
Consider  monomial ideal\label{2}
 $I=<u, v, w, z, q>\subset k[x_1,\ldots,x_5]$, where
 $u=x_{1}x_{2}x_{3},$ $v=x_{1}x_{2}x_{4},$ $w=x_{1}x_{4}x_{5},$  $z=x_{4}x_{5}x_{6}$ and
  $q=x_{3}x_{5}x_{6}$. It is easy to see that $G_I$ is the following connected
  graph.

 \psset{unit=0.15cm} \begin{pspicture}(20,0)(-35,15)
\put(2,10){$_{\bullet}$} \put(0.2,10){$u$}
\put(7.8,10){$_{\bullet}$} \put(7.7,11){$v$}
\put(13.9,10){$_{\bullet}$} \put(13.6,11){$w$}
\put(19.7,10){$_{\bullet}$} \put(19.7,11){$z$}
\put(25.7,10){$_{\bullet}$} \put(27,10){$q$}
\psline(2.2,10)(7.8,10)
\psline(13.9,10)(7.8,10)
\psline(13.9,10)(19.7,10)\psline(25.9,10)(19.7,10)
\end{pspicture}

 However $I$ has not linear relations. It's minimal free S-resolutions is:
$$0 \longrightarrow S(-6) \longrightarrow S(-4)^4+S(-5) \longrightarrow S(-3)^5 \longrightarrow  I\longrightarrow 0.$$}
\end{Example}

\begin{Remark} \label{4}{\em
Let $I$ be a squarefree monomial  ideal and  $u=u_{i_1}, u_{i_2},
\ldots, u_{i_{t-1}}, u_{i_t}=v$ be  a path in $G_I$. If $ r \in
F(v)$ and $r \notin F(u) $, then $ x_r $ is the coefficient of
some $ e_{i_j}  $ in the linear relations which comes from the given
path.}
\end{Remark}

\begin{Remark}\label{13}{\em
Let $u=u_{i_1}, u_{i_2}, \ldots, u_{i_{t-1}}, u_{i_t}=v$ be a path
in $G_I$. We know  there exist minimal( with respect to divisibility)
monomials $w$ and $w^{'}$  such that  $we_{i_1}
-w^{'}e_{i_t} \in \ker(\varphi)$ and, hence,\\
$w e_{i_1} -
w^{'} e_{i_t} =f_{i_1}(x_{k_1}e_{i_1} - {x_{k_2}}^{'}e_{i_2})+
f_{i_2}(x_{k_2}e_{i_2} - {x_{k_3}}^{'}e_{i_3})+ \ldots+\\
f_{i_{t-1}}(x_{k_{t-1}}e_{i_{t-1}} - {x_{k_{t}}}^{'}e_{i_t}).$

 If  for each $j$, $1\leq j \leq t$, \; $F(u_{i_j})
\subseteq F(u) \cup F(v)$ and $ x_{l}\mid w $,
then $ x_l \nmid w^{'}$. By Remark \ref{4}  $ x_{l} $
is the coefficient of some $ e_{i_r} $ which appear in the above
equation. Hence, there exist $ u_{i_j} $ such that $  l \in
F(u_{i_j}) $. Since $F(u_{i_j}) \subseteq F(u) \cup F(v)$ and $ l
\notin F(u)$, one has $ l \in F(v)$. So $ x_{l}\nmid w^{'} $.
Similarly for arbitrary $ x_{r} $ where $ x_{r}\mid w^{'} $, one
has $ x_{r}\nmid w $. Hence we conclude that $w=x_{F(v) \setminus
F(u)}$ and $w^{'}=x_{F(u) \setminus F(v)}$.}
\end{Remark}

\begin{Remark}\label{16}{\em
Let $w_{i_1}$ and $w_{i_t} $ be two minimal monomials (with respect to divisibility) in $S$ such that
$w_{i_1} e_{i_1} -w_{i_t}e_{i_t}\in\ker(\varphi)$. Assume that
\begin{center}
$w_{i_1} e_{i_1} -w_{i_t}e_{i_t} =f_{i_1}(x_{k_1}e_{i_1} -
{x_{k_2}}^{'}e_{i_2})+ f_{i_2}(x_{k_2}e_{i_2} -
{x_{k_3}}^{'}e_{i_3})+ \ldots+ f_{i_{t-1}}(x_{k_{t-1}}e_{i_{t-1}} -
{x_{k_t}}^{'}e_{i_t})$.
\end{center}
If $x_i \nmid u_{i_1}$ and there exist $u_{i_r}$, $2\leq r\leq t$, such that $x_i \mid
u_{i_r}$, then $x_i \mid w_{i_1}$.
We may assume that $r$ is the smallest number with the property that $x_i\mid u_{i_r}$.
We know that   $ f_{i_{r-2}}(x_{k_{r-2}}e_{i_{r-2}} -
{x_{k_{r-1}}}^{'}e_{i_{r-1}})+ f_{i_{r-1}}(x_ie_{i_{r-1}} -
{x_{k_r}}^{'}e_{i_r})$ is a part of  above equation. Since  in the above equation $e_{i_{r-1}}$ must be eliminated, we have  $f_{i_{r-1}}x_i=f_{i_{r-2}}{x_{k_{r-1}}}^{'}$. Hence, $x_i \mid
f_{i_{r-2}}$. Also, $e_{i_{r-2}}$ must be eliminated and, hence, one has
$f_{i_{r-2}}x_{k_{r-2}}=f_{i_{r-3}}{x_{k_{r-2}}}^{'}$. Therefore  $x_i \mid
f_{i_{r-3}}$. Continuing these procedures yields $x_i \mid
f_{i_{1}}$, i.e $x_i \mid w_{i_1}$.

Similarly if $x_i \nmid u_{i_t}$
and there exist $u_{i_r}$, $1\leq r\leq t-1$, such that $x_i \mid u_{i_r}$, then $x_i
\mid w_{i_t}$.}
\end{Remark}

For all $u, v \in G(I)$, let $G^{(u,v)}_I $ be the induced subgraph
of $G_{I}$ on vertex set
$$V (G^{(u,v)}_I) = \{w \in G(I): F(w)
\subseteq F(u) \cup F(v)\}.$$

The following fact was proved by Bigdeli, Herzog and Zaare-Nahandi
\cite{BHZ}. Here we present  a different proof of it.

\begin{Proposition} \label{009}{\em
Let $I$ be a squarefree monomial ideal which is generated in degree $d$. Then $I$ has
linear relations if and only if $G^{(u,v)}_I $ is connected for all
$u, v \in G(I)$.}
\end{Proposition}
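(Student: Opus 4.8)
The plan is to work entirely with the binomial syzygies
$$\sigma_{ab}:=x_{F(b)\setminus F(a)}\,e_a-x_{F(a)\setminus F(b)}\,e_b,\qquad a,b\in G(I),$$
which generate $\ker(\varphi)$, together with the observation that $\sigma_{ab}$ is one of the prescribed linear forms exactly when $a$ and $b$ are adjacent in $G_I$ (i.e. $|F(a)\setminus F(b)|=|F(b)\setminus F(a)|=1$). Let $L\subseteq\ker(\varphi)$ be the submodule generated by the linear syzygies, equivalently by the $\sigma_{ab}$ with $\{a,b\}\in E(G_I)$. Then ``$I$ has linear relations'' is precisely the statement ``$\sigma_{ab}\in L$ for all $a,b\in G(I)$'', and I would prove the two implications separately.

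For the forward implication I fix $u,v$ and take arbitrary $w,w'\in V(G^{(u,v)}_I)$. Since $I$ has linear relations, the minimal syzygy $\sigma_{ww'}$ is an $S$-combination of the linear generators, and the telescoping/cancellation bookkeeping already carried out in the proof of Lemma \ref{003} extracts a walk, hence a path $w=z_0,z_1,\dots,z_r=w'$ in $G_I$. The key is to confine this path to $G^{(u,v)}_I$: by Remark \ref{16}, if $l\in F(z_a)$ with $l\notin F(w)$ then $x_l$ divides the multiplier $x_{F(w')\setminus F(w)}$ of $e_w$, so $l\in F(w')$; symmetrically $l\notin F(w')$ forces $l\in F(w)$. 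Hence $F(z_a)\subseteq F(w)\cup F(w')\subseteq F(u)\cup F(v)$ for every $a$, so the whole path lies in $G^{(u,v)}_I$ and connects $w$ to $w'$. As $w,w'$ were arbitrary, $G^{(u,v)}_I$ is connected.

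The converse is the substantial direction. I would prove $\sigma_{uv}\in L$ by induction on the pair $\bigl(|F(u)\cup F(v)|,\ \delta(u,v)\bigr)$ ordered lexicographically, where $\delta(u,v)$ is the length of a shortest $u$--$v$ path inside $G^{(u,v)}_I$ (finite by hypothesis). If $\delta(u,v)=1$ then $\sigma_{uv}$ is itself a linear generator. Otherwise take a shortest path $u=w_0,w_1,\dots,w_t=v$ in $G^{(u,v)}_I$, write $\{p\}=F(w_1)\setminus F(u)$ and $\{q\}=F(u)\setminus F(w_1)$, and note $p\in F(v)\setminus F(u)$ because $F(w_1)\subseteq F(u)\cup F(v)$. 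A direct computation then yields
$$\sigma_{uv}=x_{(F(v)\setminus F(u))\setminus\{p\}}\,\sigma_{uw_1}+c\,\sigma_{w_1v},$$
where $\sigma_{uw_1}$ is a linear generator and $c=\lcm(u,v)/\lcm(w_1,v)$ is a monomial; the second term is forced, since the residue $\sigma_{uv}-x_{(F(v)\setminus F(u))\setminus\{p\}}\sigma_{uw_1}$ is a syzygy on $w_1,v$ whose multiplier produces $\lcm(u,v)$. Thus $\sigma_{uv}\in L$ as soon as $\sigma_{w_1v}\in L$, and it remains to verify that $(w_1,v)$ is strictly smaller in the lexicographic order.

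This last bookkeeping is the crux, and it splits on where $q$ lies. If $q\in F(u)\setminus F(v)$, then $q$ leaves the span, $F(w_1)\cup F(v)=(F(u)\cup F(v))\setminus\{q\}$, so the first coordinate strictly drops and the induction hypothesis applies (here $G^{(w_1,v)}_I$ is connected by assumption, and $c=x_q$). If instead $q\in F(u)\cap F(v)$, then $F(w_1)\cup F(v)=F(u)\cup F(v)$, so $c=1$ and the span is unchanged; but the tail $w_1,\dots,w_t$ still lies in $G^{(w_1,v)}_I$, giving $\delta(w_1,v)\le t-1<\delta(u,v)$, and again the hypothesis applies. Either way $\sigma_{w_1v}\in L$, hence $\sigma_{uv}\in L$. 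The main obstacle is exactly securing this strict decrease of one of the two measures, and it is the containment $F(w_a)\subseteq F(u)\cup F(v)$—that the path stays inside $G^{(u,v)}_I$—that keeps $c$ a monomial and guarantees termination.
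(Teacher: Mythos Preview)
Your argument is correct. The forward direction is essentially the paper's argument, only stated for arbitrary vertices $w,w'$ of $G_I^{(u,v)}$ rather than just for $u,v$ themselves; since $G_I^{(w,w')}\subseteq G_I^{(u,v)}$ whenever $w,w'\in V(G_I^{(u,v)})$, the two formulations are equivalent, and the appeal to Remark~\ref{16} is the same in both.

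The converse, however, follows a genuinely different route. The paper proceeds in one stroke: given $u,v$, pick any path $u=u_{i_1},\dots,u_{i_t}=v$ inside $G_I^{(u,v)}$ and invoke Remark~\ref{13}, which says that when every $F(u_{i_j})\subseteq F(u)\cup F(v)$ the telescoped combination of the edge-syzygies along the path collapses exactly to $x_{F(v)\setminus F(u)}e_u-x_{F(u)\setminus F(v)}e_v$, with no extraneous monomial factor. Your proof instead peels off one edge at a time via the identity $\sigma_{uv}=x_{(F(v)\setminus F(u))\setminus\{p\}}\,\sigma_{uw_1}+c\,\sigma_{w_1v}$ and closes with a two-parameter lexicographic induction on $\bigl(|F(u)\cup F(v)|,\delta(u,v)\bigr)$. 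What your approach buys is an explicit verification that the monomial $c$ really is a monomial (indeed $x_q$ or $1$) and that the induction terminates, whereas the paper delegates that bookkeeping to Remark~\ref{13}. Conversely, the paper's telescoping avoids the case split on whether $q\in F(v)$ and needs no induction at all. Both arguments ultimately rest on the same fact---that staying inside $F(u)\cup F(v)$ prevents spurious variables from appearing in the coefficients---but the paper packages it as a single remark while you unwind it step by step.
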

\begin{proof}
Assume that  $I$ has  linear relations and $u, v \in G(I)$. We know
that $x_{F(v) \setminus  F(u)}e_{u} -x_{F(u) \setminus  F(v)}e_{v}
\in \ker(\varphi)$ . Since  $ \ker(\varphi) $ is generated by linear
forms
\begin{center}
$x_{F(v) \setminus  F(u)}e_{u} -x_{F(u) \setminus  F(v)}e_{v}
=f_{i_1}(x_{k_1}e_{i_1} - {x_{k_2}}^{'}e_{i_2})+
f_{i_2}(x_{k_2}e_{i_2} - {x_{k_3}}^{'}e_{i_3})+ \ldots+
f_{i_{t-1}}(x_{k_{t-1}}e_{i_{t-1}} - {x_{k_t}}^{'}e_{t})$.
\end{center}
Hence $u=u_{i_1}, u_{i_2}, \ldots, u_{i_{t-1}}, u_{i_t}=v$ is a path
in $G_I$. Now  it is enough to
show that $F(u_{i_j}) \subseteq F(u_{i_1}) \bigcup F(u_{i_t})$ for all
$i_j$, $1  < j  < t$. Assume to the contrary that there
exist $k$, $1  < k  < t$, such that $F(u_{i_k}) \nsubseteq F(u_{i_1}) \bigcup F(u_{i_t})$. Let $ l \in F(u_{i_k}) $ and $l \notin
F(u_{i_1}) \bigcup F(u_{i_t})$. By Remark $\ref{16}$\;  $x_l \mid
x_{F(v) \setminus  F(u)}$ and $x_l \mid x_{F(u) \setminus  F(v)}$.
This is a contradiction.

For converse, we know that $\ker(\varphi)$ is generated by $x_{F_v \setminus  F_u}e_{u} -x_{F_u
\setminus  F_v}e_{v} $, where $u, v \in G(I)$. By our assumption,  $ G^{(u,v)}_I $ is a
connected graph for all $u, v\in G(I)$. Therefore  there exist  a path $u=u_{i_1}, u_{i_2}, \ldots, u_{i_{t-1}}, u_{i_t}=v$
between $u$ and $v$ in $G^{(u,v)}$. By Remark \ref{13}, one has
$$x_{F(v) \setminus  F(u)}e_{i_1} -
x_{F(u) \setminus F(v)}e_{i_t}=f_{i_1}(x_{k_1}e_{i_1} - {x_{k_2}}^{'}e_{i_2})+ \ldots+
f_{i_{t-1}}(x_{k_{t-1}}e_{i_{t-1}} - {x_{k_t}}^{'}e_{t}).$$

Hence, $x_{F(v) \setminus  F(u)}e_{i_1}- x_{F(v) \setminus  F(u)}e_{i_t}$ is a linear combination of linear
forms.
\end{proof}

\begin{Lemma}\label{004}{\em
Let I be a squarefree monomial ideal. Then one can assign to each
cycle of $G_{I}$ an element in $\ker( \psi)$.}
\end{Lemma}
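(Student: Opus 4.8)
The plan is to realize each cycle as an explicit monomial combination of the linear syzygies sitting along its edges. Fix a cycle $C\colon u_{i_1},u_{i_2},\dots,u_{i_s},u_{i_{s+1}}=u_{i_1}$ in $G_I$. For each edge $\{u_{i_k},u_{i_{k+1}}\}$ the adjacency relation furnishes variables $a_k,b_k$ with $a_k u_{i_k}=b_k u_{i_{k+1}}$, so the corresponding basis element $g_k$ of $F_1$ can be taken with $\psi(g_k)=a_k e_{i_k}-b_k e_{i_{k+1}}$ (after the reduction of type $(i)$ triangles, every edge present in $G_I$ is a genuine minimal generator of $\ker(\varphi)$, since a syzygy of degree $d+1$ cannot lie in $\mathfrak m\cdot\ker(\varphi)$; hence each $g_k$ is an honest basis element of $F_1$). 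First I would record the multidegree $\mathbf m_k:=a_k u_{i_k}=b_k u_{i_{k+1}}$ attached to the $k$-th edge.

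Next I would set $\mathbf M:=\lcm(\mathbf m_1,\dots,\mathbf m_s)$ and define the monomials $c_k:=\mathbf M/\mathbf m_k$, which are honest monomials because $\mathbf m_k\mid\mathbf M$ for every $k$. The element assigned to the cycle is then $\sigma_C:=\sum_{k=1}^{s}c_k g_k\in F_1$. To check that $\sigma_C\in\ker(\psi)$ I would compute the coefficient of each basis vector $e_{i_m}$ in $\psi(\sigma_C)=\sum_k c_k(a_k e_{i_k}-b_k e_{i_{k+1}})$. The vector $e_{i_m}$ receives the contribution $c_m a_m$ from the $k=m$ term and $-c_{m-1}b_{m-1}$ from the $k=m-1$ term, so its total coefficient is $c_m a_m-c_{m-1}b_{m-1}$. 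Using $\mathbf m_m=a_m u_{i_m}$ one gets $c_m a_m=\mathbf M/u_{i_m}$, while using $\mathbf m_{m-1}=b_{m-1}u_{i_m}$ one gets $c_{m-1}b_{m-1}=\mathbf M/u_{i_m}$ as well; hence every coefficient cancels and $\psi(\sigma_C)=0$. Since the $c_k$ are nonzero and the $g_k$ are distinct basis elements of $F_1$, the element $\sigma_C$ is a nonzero element of $\ker(\psi)$, which gives the desired assignment.

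The point that makes the cancellation close up around the whole cycle is a global consistency condition, and this is where I would focus the argument. Cancellation at each $e_{i_m}$ is purely local, but for the family $\{c_k\}$ to exist as monomials simultaneously one needs the edge data to be compatible all the way around; concretely this is the identity $\prod_{k=1}^s a_k=\prod_{k=1}^s b_k$, which follows by multiplying the $s$ edge relations $a_k u_{i_k}=b_k u_{i_{k+1}}$ and using that $\prod_k u_{i_{k+1}}=\prod_k u_{i_k}$ because $C$ is closed. Choosing $\mathbf M$ to be the least common multiple is exactly what converts this multiplicative compatibility into the uniform divisibilities $\mathbf m_k\mid\mathbf M$, so that all $c_k$ are monomials at once. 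This is the main (and essentially the only) obstacle, the remaining verifications being the routine degree bookkeeping sketched above.
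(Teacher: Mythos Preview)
Your argument is correct and in fact a bit cleaner than the paper's. The paper does not build the cycle relation directly from the lcm of the edge multidegrees; instead it singles out one edge $\{u_{i_1},u_{i_2}\}$ of the cycle, giving a basis element $g$ of $F_1$ with $\psi(g)=xe_{i_1}-ye_{i_2}$, and then uses its earlier path lemma (Lemma~\ref{002}) together with Remark~\ref{001} to produce monomials $w_1,w_2$ from the complementary path $u_{i_2},\dots,u_{i_t},u_{i_1}$ with $w_1e_{i_1}-w_2e_{i_2}=\psi(\sum_{j\ge 2} f_{i_j}g_{i_j})$; since $w_1=hx$, $w_2=hy$, the element $hg-\sum_{j\ge 2} f_{i_j}g_{i_j}$ lies in $\ker(\psi)$ and is nonzero because $g$ does not appear on the right. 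Your lcm construction is the Taylor--complex style shortcut: it gives the element in one line and makes the multidegree $\mathbf M$ explicit, whereas the paper's route has the advantage of reusing the path machinery already developed. One remark: your final paragraph about the ``global consistency'' $\prod_k a_k=\prod_k b_k$ is superfluous. The divisibilities $\mathbf m_k\mid\mathbf M$ hold by definition of the lcm, and the cancellation at each $e_{i_m}$ works because $c_ka_k=\mathbf M/u_{i_k}$ depends only on the vertex, so the ``closing up'' is automatic once $u_{i_{s+1}}=u_{i_1}$; no extra compatibility is needed.
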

\begin{proof}
Let $u_{i_1}, u_{i_2}, \ldots, u_{i_{t-1}}, u_{i_t}, u_{i_1}$ be a
cycle in $G_{I}$. Then we have two paths $u_{i_1}, u_{i_2} $ and
$u_{i_2},\ldots, u_{i_t}, u_{i_1} $. Since $\{u_{i_1}, u_{i_2}\} \in
E(G_I)$, there exist variables $x$ and $y$ such that  $ xe_{i_1} -ye_{i_2} \in \ker(\varphi)=\Im( \psi) $. This  is an element in the minimal set of generators
of $\ker(\varphi)$. Hence,  there exist  a basis element $g$  of
$F_1$ such that $\psi(g)=xe_{i_1} -ye_{i_2} $.

By Lemma \ref{002}, there exist monomials $w_{1}$ and $w_{2}$ in $S$  such that $
w_{1}e_{i_1} -w_{2}e_{i_2}=  f_{i_2}(x_{k_2}e_{i_2} -
{x_{k_3}}^{'}e_{i_3})+ \ldots+ f_{i_{t}}(x_{k_{t}}e_{i_{t}} -
{x_{k_{t+1}}}^{'}e_{i_1})= \psi (\sum_{j=2} ^{t} f_{i_j}g_{i_j})$.
Remark \ref{001} implies that $w_1=hx_{F(u_{i_2}) \setminus
F(u_{i_1})}=hx$ and $w_{2}=hx_{F(u_{i_1}) \setminus F(u_{i_2})}=hy$.
Therefore, we have
\begin{center}
$ h(xe_{i_1} -ye_{i_2})=w_{1}e_{i_1} -w_{2}e_{i_2}$.
\end{center}
This implies that
 $ h \psi (g) =\psi (\sum_{j=2} ^{t} f_{i_j}g_{i_j})$ and, hence,
 $(hg- \sum_{j=2} ^{t} f_{i_j}g_{i_j})\in\ker \psi$. Since $g \neq g_{i_j}$
 for all $1 \leq j \leq r$ one has $(hg- \sum_{j=2} ^{t} f_{i_j}g_{i_j})\neq 0$
\end{proof}

\begin{Remark}\label{3} {\em
Let  $  w $ be an element of a
minimal set of generators of $\ker(\psi)$. If $ w=\sum h_i g_i $,
where $g_i$ is a basis element of $F_1$ and $0 \neq h_i \in S$, then $h_i$ is a monomial.
Without loss of generality, we may assume that  $\psi( g_1)={t_1}^{'}
e_1-t_2 e_2$. Let $u \in \supp (h_1)$ be a monomial. Since $ ut_2e_2$ must be eliminated,
there exist a basis element  $g_j$ of $F_1$ such that $\psi( g_j)=({t_2}^{'} e_2 -t_3 e_l)$.
Without loss of generality,  we may assume $j=2$ and $l=3$. Hence,
$t_2 \frac{u}{{t_2}^{'}}=u^{'}\in \supp(h_2)$.  Again since $u't_3 e_3$
must be eliminated, without loss of generality,  we may assume there
exist a basis element  $g_3$ of $F_1$ such that $\psi( g_3)=({t_3}^{'} e_3 -t_4 e_4)$.
Therefore  $t_3\frac{u^{'}}{{t_3}^{'}}=u^{''} \in \supp(h_3)$.
Continuing these procedures yields $\psi( g_l)=({t_l}^{'} e_l -t_1
e_1)$ and $t_l\frac{u^{l-2}}{{t_l}^{'}}=u^{l-1} \in \supp(h_l)$. Hence
we obtain a cycle in $G_I$ in this way. Now if there exist another
monomial $v\in \supp(h_1)$ with $u \neq v$, then by the similar argument one can
find a new cycle in $G_I$. Hence, Lemma
\ref{004} implies that  $w$ is a combination of some other elements of
$\ker(\psi)$, a contradiction. So $h_i$ is a monomial.}
\end{Remark}

\begin{Lemma} \label{005}{\em
Let $I$, $\varphi$ and $\psi$ be as mention in  above. If  $\ker
\varphi$ is generated by linear forms, then corresponding to every
element in a minimal set of generators of $\ker( \psi)$ there is a
cycle in $G_{I}$.}
\end{Lemma}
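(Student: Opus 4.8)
The plan is to read the cycle directly off the way a minimal generator of $\ker(\psi)$ expands in the basis $\{g_i\}$ of $F_1$, and the hypothesis that $\ker(\varphi)$ is generated by linear forms is exactly what makes this possible. Under that hypothesis every minimal generator of $\ker(\varphi)=\Im(\psi)$ is a linear syzygy $x_a e_a - x_b e_b$ with $x_a,x_b$ variables, and by the very definition of $G_I$ such a form records that $x_a u_a = x_b u_b$, i.e.\ that $\{u_a,u_b\}$ is an edge of $G_I$. So the first thing I would do is observe that each basis element $g_i$ of $F_1$ is identified with an edge of $G_I$, and that $\psi(g_i)$ is a two-term (binomial) linear form.

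Next I would take $w$ in a minimal set of generators of $\ker(\psi)$ and, invoking Remark \ref{3}, write $w=\sum_i h_i g_i$ with each $h_i$ a nonzero monomial and only finitely many terms present. Applying $\psi$ gives $\sum_i h_i\psi(g_i)=0$, and I would then run exactly the cancellation/tracing argument already carried out in the proof of Remark \ref{3}. After normalizing, write $\psi(g_1)=x_{k_1}'e_1-x_{k_2}e_2$; the monomial $h_1 x_{k_2}$ standing in front of $e_2$ cannot survive in a vanishing sum, so some other term $h_2\psi(g_2)$ must involve $e_2$, forcing $\psi(g_2)=x_{k_2}'e_2-x_{k_3}e_3$, an edge of $G_I$ adjacent to $\{u_1,u_2\}$ at the vertex $u_2$. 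Iterating, the requirement that every $e_j$ be eliminated produces a walk $u_1,u_2,u_3,\dots$ whose consecutive edges are precisely the $\psi(g_i)$; since the sum is finite and equal to zero, the process must return to a previously visited vertex and close up as $\psi(g_l)=x_{k_l}'e_l-x_{k_{l+1}}e_1$, giving a closed walk in $G_I$ of length at least three with distinct consecutive edges, hence containing a genuine cycle.

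The step I expect to be the main obstacle is the bookkeeping that guarantees the tracing advances along a new edge at each stage rather than stalling, and that the resulting closed walk is nondegenerate. This is exactly where the binomial shape of each $\psi(g_i)$, a consequence of the linearity of the relations, is essential: the elimination of $e_j$ at one endpoint of an edge must be effected by a term attached to a different edge sharing the vertex $u_j$, so the walk genuinely moves forward and does not merely reverse along the edge it just used. I would also remark that this construction is the reverse of Lemma \ref{004}: there a cycle of $G_I$ produced an element of $\ker(\psi)$, and here a minimal generator of $\ker(\psi)$ produces a cycle of $G_I$, so the two statements together establish the correspondence between cycles of $G_I$ and the syzygies among the linear relations on which the later results rely.
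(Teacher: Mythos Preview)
Your proposal is correct and follows essentially the same cancellation/tracing argument as the paper: both expand a minimal generator of $\ker(\psi)$ as $\sum h_i g_i$ with monomial coefficients, use that each $\psi(g_i)$ is a linear binomial recording an edge of $G_I$, and trace the forced cancellations among the $e_j$'s to produce a closed walk (hence a cycle) in $G_I$. The only cosmetic difference is that the paper isolates the single term $-h_1\psi(g_1)$ on one side and invokes the proof of Lemma~\ref{003} to read a path from $u_{i_2}$ back to $u_{i_1}$ out of the remaining terms, whereas you run the tracing directly on the whole vanishing sum in the style of Remark~\ref{3}; the content is the same.
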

\begin{proof}
Let  $\sum_{i=1} ^{n}  h_{i}g_{i}$ an element of a minimal
generating set of $\ker (\psi)$. Then $\psi (\sum_{i=1} ^{n}
h_{i}g_{i} ) \\ =\sum_{i=1}^{n}  h_{i}\psi (g_{i})=0 $, where $g_i$
is a basis element of $F_1$ and   $h_i$ is monomial
for $i=1,\ldots.n$. Then $-h_{1}\psi (g_{1})=\sum_{i=2}^{n}
h_{i}\psi (g_{i}) $. Assume that $ \psi (g_{1})=x_{i_1}{e_{i_1}}-
x_{i_2}e_{i_2}$. So  $ u_{i_1}, u_{i_2} $ is a path in $G_I$.

The left-hand side of above  equation is of the form $
w_{i_1}e_{i_1} -w_{i_2}e_{i_2}$. By proof of Lemma \ref{003},  the
right-hand side of the above equation is of the form
\begin{center}
$ f_{i_2}(x_{k_2}e_{i_2} - {x_{k_3}}^{'}e_{i_3})+
f_{i_3}(x_{k_3}e_{i_3} - {x_{k_4}}^{'}e_{i_4})+ \ldots+
f_{i_t}(x_{k_t}e_{i_t}-{x_{k_{t+1}}}^{'}e_{i_1})$,
\end{center}
where $ e_{i_t}\neq e_{i_2}$ . If $ e_{i_t}= e_{i_2}$, then $
{x_{k_{t+1}}}^{'}=x_{i_1}$ and $ x_{k_t}=x_{i_2}$. Hence,
$g_{1}$ appears  in the right-hand side of equation , a contradiction.
Thus  $  u_{i_2}, u_{i_3}, \ldots, u_{i_t}, u_{i_1}$ is a path
which is different from path $ u_{i_1}, u_{i_2}$.
\end{proof}

\begin{Theorem} \label{02} {\em
Let $I\subset S$ be a squarefree monomial  ideal
such that $G_I \cong C_m $, $m\geq
4$. Then the following conditions are equivalent:

\begin{itemize}
\item[(a)]
$I$ has a linear  resolution;

\item[(b)]
$m=n$ and with a suitable relabeling of variables for all $j$ one has $x_i\mid u_j$ for all $i$, $i+1\neq j$ and $i\neq j$, where
$n+1=1$;

\item[(c)]
$ I $ is variable-decomposable ideal;

\item[(d)]
$I$ has  linear quotients.
\end{itemize}}
\end{Theorem}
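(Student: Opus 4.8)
The plan is to establish the cycle of implications $(a)\Rightarrow(b)\Rightarrow(c)\Rightarrow(d)\Rightarrow(a)$. Two of these are immediate from what is already available: $(c)\Rightarrow(d)$ holds because a variable-decomposable ideal is $0$-decomposable, hence decomposable, hence has linear quotients by Theorem \ref{decqou}; and $(d)\Rightarrow(a)$ is Proposition \ref{qouresu}. So the real content lies in $(a)\Rightarrow(b)$ and $(b)\Rightarrow(c)$, and I would spend essentially all the effort on the first of these.

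For $(a)\Rightarrow(b)$ I would set up coordinates along the cycle: label $G_I=C_m$ as $u_1,\dots,u_m$ and record each step by $F(u_{j+1})=(F(u_j)\setminus\{a_j\})\cup\{b_j\}$, indices mod $m$. Since $m\ge 4$ the cycle has no triangles, so the linear part of $\ker(\varphi)$ is spanned by the edge syzygies $s_j=x_{b_j}e_j-x_{a_j}e_{j+1}$ of degree $d+1$; because $(a)$ gives linear relations, these minimally generate $\ker(\varphi)=\Im(\psi)$, so $F_1=\bigoplus_j Sg_j$ with $\psi(g_j)=s_j$. The decisive point is that $C_m$ has a \emph{unique} cycle: by Lemmas \ref{004} and \ref{005} together with Remark \ref{3}, this cycle produces a generator $z=\sum_j m_jg_j$ of $\ker(\psi)$ with monomial coefficients, and $\ker(\psi)$ is generated by it. Linearity of the whole resolution then forces $z$ into degree $d+2$, i.e. each $m_j=x_{c_j}$ is a single variable. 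Reading off the coefficient of each $e_k$ in $\psi(z)=\sum_j m_js_j=0$ gives $x_{b_k}x_{c_k}=x_{a_{k-1}}x_{c_{k-1}}$, hence $\{b_k,c_k\}=\{a_{k-1},c_{k-1}\}$ as multisets.

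From this rigid relation I would finish the numerics. The alternative $b_k=a_{k-1}$ is impossible: if the variable dropped at step $k-1$ were restored at step $k$, a direct computation of $F(u_{k-1})$ and $F(u_{k+1})$ shows they differ in a single variable, so $u_{k-1}$ and $u_{k+1}$ would be adjacent in $G_I$, contradicting that they are non-consecutive when $m\ge 4$. Hence $c_k=a_{k-1}$ and $b_k=c_{k-1}=a_{k-2}$, so the variable removed at step $k-2$ re-enters exactly at step $k$; equivalently every variable is absent from precisely two consecutive generators. Counting the $m$ removal events (one per step, and each variable removed exactly once since $\gcd(u_i)=1$ and each absence interval has length $2$) yields $m=n$, and comparing the two counts $m(n-d)$ and $2n$ of ``missing variable'' incidences yields $d=n-2$. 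Relabelling the variable absent from $u_j,u_{j+1}$ as $x_j$ gives exactly the description in $(b)$.

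For $(b)\Rightarrow(c)$ I would use any $x_l$ as shedding variable: here $I_{x_l}=\langle u_l,u_{l+1}\rangle$ and $I^{x_l}=\langle u_i : i\ne l,l+1\rangle$, whose syzygy graph is the path obtained from $C_m$ by deleting the two adjacent vertices $u_l,u_{l+1}$. The shedding condition checks directly, since $u_{l-1}:u_l=x_l$ and $u_{l+2}:u_{l+1}=x_l$; then $I_{x_l}$ is $2$-generated and trivially variable-decomposable, while $I^{x_l}$ has the same ``consecutive'' shape on a line and is variable-decomposable by the analogous induction (the line/tree case), so $I$ is $0$-decomposable. Finally, the step I expect to require the most care is $(a)\Rightarrow(b)$, specifically upgrading ``linear relations'' to the exact numerology $m=n$, $d=n-2$: the leverage comes entirely from uniqueness of the cycle forcing $\ker(\psi)$ to be principal and linearity pinning its single generator in degree $d+2$, and the two subtle points are (i) justifying rigorously, via Lemmas \ref{004}, \ref{005} and Remark \ref{3}, that the cyclic syzygy really is a minimal generator of $\ker(\psi)$ so that its degree equals the second Betti shift, and (ii) the bookkeeping turning ``every absence interval has length $2$'' into the precise indexing demanded by $(b)$.
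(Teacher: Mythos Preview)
Your proposal is correct and follows essentially the same route as the paper: both prove $(a)\Rightarrow(b)$ by using the uniqueness of the cycle in $G_I\cong C_m$ to force $\ker(\psi)$ to be principal, then let linearity pin the generator in degree $d+2$ so its monomial coefficients are variables, and read off the structure from the cancellation equations; both handle $(b)\Rightarrow(c)$ by peeling with a shedding variable $x_l$ giving $I_{x_l}=\langle u_l,u_{l+1}\rangle$ and a path-shaped $I^{x_l}$, and both cite Theorem~\ref{decqou} and Proposition~\ref{qouresu} for the remaining implications. Your explicit exclusion of the case $b_k=a_{k-1}$ via the chord argument is in fact a point the paper passes over (it jumps directly from $h_1x_{t_1}=h_mx_{t_1}'$ to $h_1=x_{t_1}'$, $h_m=x_{t_1}$), so your write-up is slightly more careful there.
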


\begin{proof}
$(a) \implies (b):$
Assume that $ I $ has a linear resolution. Since $G_I$ is a cycle, by Lemma \ref{004} and Lemma \ref{005}, $\ker( \psi)=<w>$.  Let $w= \sum_{i=1}^m h_ig_i$ where $g_i$
is a basis element of $F_1$ and $h_i$ is a monomial in $S$ for $i=1,
\ldots, m$. Without loss of generality, we may assume that $G_I=u_1, u_2,\ldots, u_m, u_1$. Then
\begin{center}
$\psi(w)=\sum_{i=1}^m  h_i \psi(g_i)=h_1(x_{t_1} e_1-{x_{t_2}}^{'}
e_2)+ h_2(x_{t_2} e_2-{x_{t_3}}^{'}e_3)+ \ldots+ h_m(x_{t_m} e_m -
{x_{t_1}}^{'} e_1)=0$.
\end{center}
Therefore,  $h_1x_{t_1} e_1=h_m {x_{t_1}}^{'}e_1$. Since,
$I$ has d-linear resolution and $\deg( e_i)=d$, we conclude that $\deg(h_i)=1$ for
all $i$. Consequently,   $h_1={x_{t_1}}^{'}  $ and
$h_m=x_{t_1} $. By  similar argument $ h_j
={x_{t_j}}^{'}$ and  $ h_j =x_{t_{j+1}}$. Hence, $ x_{t_{j+1}}={x_{t_j
}}^{'}$ for all $ 1 \leq j \leq m-1 $. So $\ker(\varphi)$ is minimally generated by the following linear forms.

$ (x_{t_m} e_1-x_{t_2} e_2), (x_{t_1} e_2-x_{t_3} e_3),
(x_{t_2} e_3-x_{t_4} e_4), \ldots, \\(x_{t_{m-2}} e_{m-1}-x_{t_m}
e_m), (x_{t_{m-1}} e_m-x_{t_1} e_1).
$

For an arbitrary variable $x_i$  in $S$ there exits  $u_i $ and
$u_j$  in $ G(I)$ such that $x_i \mid u_i$ and $x_i \nmid u_j$.  Hence, by
Remark \ref{4} $x_i \in \{x_{t_1}, x_{t_2}, \ldots, x_{t_m} \}$.
It is clear  that the variables $x_{t_1}, x_{t_2}, \ldots, x_{t_m}$ are distinct and, hence, $n=m$.

Set $ x_{t_{-1}}=x_{t_{m-1}},
x_{t_{m+1}}=x_{t_1}$, $e_{0}=e_{m}$ and $e_{m+1}=e_{1}$. For $ 1 \leq i \leq m-1 $, we have
$\varphi(x_{t_{i-2}} e_{i-1}-x_{t_{i}} e_{i})=0$ and, hence, $x_{t_i} \mid u_{i-1}$ and  $x_{t_i} \nmid u_i$.
 Also, from $\varphi(x_{t_{i}} e_{i+1}-x_{t_{i+2}} e_{i+2})=0$,
we have $x_{t_i} \nmid u_{i+1} $ and $x_{t_i}
\mid u_{i+2} $.  By Remark \ref{4}  $x_{t_i} \mid u_j $ for $ j\neq i, i+1$.

$(b) \implies (c):$ It is easy to see that $I_{x_1}=<u_1, u_2>$   is  variable-decomposable,  $I^{x_1}=<u_3, \ldots, u_n>$
and  $u=x_1$ is a shedding variable.
Also, it  is clear that $x_2$ is a
shedding variable  for $I^{x_1}$ and    $(I^{x_1})^{x_2}=<u_4, \ldots,
u_n>$,  $(I^{x_1})_{x_2}=<u_3>$. Continuing these procedures yields
that  $I^{x_1}$ is variable-decomposable. Hence, $ I $
is variable-decomposable ideal.

$(c) \implies (d)$ follows by Theorem \ref{decqou}.

$(d) \implies (a)$ follows by Proposition \ref{qouresu}.
\end{proof}

\begin{Corollary}{\em
Let $I\subset S$ be a squarefree monomial ideal
generated in degree 2 and assume that $G_I \cong C_m$, $m \geq 4$.
Then $I$ has a linear resolution if and only if  $m=4$.}
\end{Corollary}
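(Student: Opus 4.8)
The plan is to reduce everything to Theorem \ref{02}, whose condition (b) gives an explicit combinatorial description of the degree-$d$ squarefree ideals with $G_I \cong C_m$ that have a linear resolution, and then to specialize to $d = 2$. Both implications of the corollary will be extracted from that description together with the single extra constraint that each generator has degree exactly $2$.

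For the forward implication, suppose $I$ has a linear resolution. Since $G_I \cong C_m$ with $m \ge 4$, Theorem \ref{02} applies and condition (b) holds: after a relabeling of the variables, $m = n$ and $x_i \mid u_j$ for every index $i$ with $i \neq j$ and $i+1 \neq j$ (indices read mod $n$, so that $n+1 = 1$). Fixing a generator $u_j$, the indices $i$ with $i \neq j$ and $i \neq j-1$ account for exactly $n-2$ distinct variables, all of which divide $u_j$. But $I$ is squarefree and generated in degree $2$, so precisely two variables divide $u_j$; hence $n - 2 \le 2$. Combined with $m = n \ge 4$ this forces $m = n = 4$, which is the desired conclusion.

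For the converse, suppose $m = 4$. The key preliminary observation is that for two squarefree degree-$2$ monomials $u_i, u_j$, an equation $x u_i = y u_j$ with variables $x,y$ is possible exactly when $u_i$ and $u_j$ share precisely one variable; thus adjacency in $G_I$ amounts to sharing exactly one variable. Tracing the cycle $u_1, u_2, u_3, u_4, u_1$ and invoking the two forced non-adjacencies $u_1 \not\sim u_3$ and $u_2 \not\sim u_4$, I would show that, up to relabeling, $u_1 = x_1 x_2,\ u_2 = x_2 x_3,\ u_3 = x_3 x_4,\ u_4 = x_4 x_1$ on four distinct variables; since $\gcd(u_1,\dots,u_4) = 1$ this yields $n = 4$. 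One then checks directly that this ideal satisfies condition (b) of Theorem \ref{02} (equivalently, that it is the edge ideal of $C_4$, whose complement $2K_2$ is chordal), and concludes via Theorem \ref{02} that $I$ has a linear resolution.

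I expect the converse to be the only place requiring real work: the forward direction is essentially a one-line degree count off Theorem \ref{02}(b), whereas the converse needs the short case analysis that pins down the cyclic structure of the generators. The mild subtlety there is to confirm that the two prescribed non-adjacencies, together with $\gcd = 1$, rule out every configuration on more than four variables and determine $u_1,\dots,u_4$ uniquely up to relabeling.
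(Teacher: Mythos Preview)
Your proposal is correct and follows essentially the same route the paper intends: the corollary is stated without proof immediately after Theorem \ref{02}, and the intended argument is precisely the degree count you give---condition (b) forces at least $n-2$ distinct variables to divide each $u_j$, so $n-2\le 2$, whence $m=n=4$. Your converse, pinning down $I=\langle x_1x_2,\,x_2x_3,\,x_3x_4,\,x_4x_1\rangle$ by the short adjacency analysis and then invoking Theorem \ref{02}(b) (or equivalently Fr\"{o}berg, as the paper remarks just after), is exactly the verification the paper leaves implicit.
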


\begin{Remark}{\em
Let $I$ be a squarefree monomial ideal. If $G_I \cong C_3$, then
$I$ has linear quotients. Hence $I$ has a linear
resolution.}
\end{Remark}

Let $I$ be a squarefree monomial ideal generated in degree $2$. We
may assume that $I=I(G)$ is the edge ideal of a graph $G$. Hence, by Fr\"{o}berg's result,
$I(G)$ has a linear resolution if and only if $\bar G$ is a chordal
graph. If $G \cong C_m$, then $\bar G$ is chordal  if and only if
$m=3$ or $m=4$.  In this situation $G \cong C_m$ if and only if $G_I \cong C_m$. Hence, in this case our result  is coincide to Fr\"{o}berg's result.

\begin{Corollary}{\em
Let $I$ be a squarefree monomial  ideal generated in degree $d$ where  $G_I \cong C_m $. If $ d+2<n $ or $m\neq n$,  then $I$ can
not has a d-linear  resolution.}
\end{Corollary}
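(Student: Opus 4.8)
The plan is to prove the contrapositive: assuming $I$ has a $d$-linear resolution with $G_I\cong C_m$, I will show that necessarily $m=n$ and $n\le d+2$, so that neither of the conditions $d+2<n$ and $m\neq n$ can hold. Since this is a corollary, the whole argument should be a read-off of Theorem \ref{02} together with a degree count, plus a separate (degenerate) treatment of small cycles.

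First I would dispose of the case $m=3$. Because $G_I$ is the first syzygy graph, which by construction contains no $3$-cycle of type $(i)$, a cycle $G_I\cong C_3$ must consist of three generators of type $(ii)$, say $F(u_{i_1})=A\cup\{i\}$, $F(u_{i_2})=A\cup\{j\}$, $F(u_{i_3})=A\cup\{k\}$. The standing assumption $\gcd(u_1,\dots,u_m)=1$ then forces $A=\emptyset$, whence $d=1$ and $n=m=3$. In this situation $d+2=3=n$ and $m=n$, so both hypotheses of the corollary fail and there is nothing to prove.

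The main case is $m\ge 4$, where Theorem \ref{02} applies directly. If $I$ has a $d$-linear resolution, then condition $(b)$ of that theorem holds after a suitable relabeling of the variables: $m=n$, and for every $j$ one has $x_i\mid u_j$ whenever $i\neq j$ and $i+1\neq j$ (indices modulo $m$, with $n+1=1$). In particular $m=n$, so the alternative $m\neq n$ already fails. To rule out $d+2<n$, I count the variables dividing a fixed generator: for each $j$, the indices $i$ with $x_i\mid u_j$ comprise all of $\{1,\dots,m\}$ except possibly $i=j$ and $i=j-1$, that is, at least $m-2$ distinct variables. Since the $u_j$ are squarefree of degree $d$, this yields $d=\deg u_j\ge m-2=n-2$, i.e.\ $n\le d+2$. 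Hence $d+2<n$ also fails, and the contrapositive is complete.

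I do not expect a serious obstacle, as the statement is essentially extracted from Theorem \ref{02}; the only points needing care are the index bookkeeping modulo $m$ in the degree count (to see that exactly the two indices $j$ and $j-1$ are excluded, so $\deg u_j\ge m-2$) and the observation that, under the blanket convention $\gcd=1$, the remaining case $m=3$ survives only in the degenerate configuration $d=1$, $n=m=3$, where the claim is vacuously true. One could even sharpen the conclusion to the exact equalities $m=n=d+2$, but the inequality $d\ge m-2$ suffices for the corollary as stated.
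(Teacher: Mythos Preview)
Your proposal is correct and is precisely the intended argument: the paper states this corollary without proof, as an immediate consequence of Theorem~\ref{02}(b), and your contrapositive plus the degree count $d\ge m-2=n-2$ is exactly the right read-off. Your separate handling of $m=3$ (forcing type~(ii), hence $A=\emptyset$ under the standing $\gcd=1$ convention, so $d=1$ and $m=n=3$) is a careful addition that the paper does not spell out, since Theorem~\ref{02} is stated only for $m\ge4$ and the $C_3$ case is relegated to the preceding Remark.
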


\begin{Example}{\em
Consider  monomial ideal
 $I=<xy, zy, zq, qx>\subset k[x,y,z,q]$. The graph $ G_I $ is $4$-cycle. Since $d=2$, $n=4$ and $ d+2= n $,
 $I$ has a 2-linear  resolution.

 $$0 \longrightarrow S(-4) \longrightarrow S(-3)^4 \longrightarrow S(-2)^4 \longrightarrow  I\longrightarrow 0 .$$}
\end{Example}

\begin{Example}{\em
For monomial ideal
 $I=<xyz, yzq, zqw, qwe, wex, xye> \subset  k[x,y,z,q,e,w]$, we have  $ G_I\cong C_6$. Therefore  $I$  has not a 3-linear  resolution,
 since $d=3$, $n=6$ and $ d+2< n $. The resolution of $I$ is:
 $$0 \longrightarrow S(-6) \longrightarrow S(-4)^6 \longrightarrow S(-3)^6 \longrightarrow  I\longrightarrow 0 .$$}
\end{Example}

\section{Linear resolution of monomial ideals whose $G_I$ is a tree}
\label{sec:2}
Let $I$ be a squarefree monomial ideal such that  $G_I$ is a tree.
In this section we study linear resolution of such monomial ideals.  We know that each line is a tree, therefore first we consider the following:

\begin{Proposition}\label{line}{\em
 Let $I=<u_1,\ldots, u_m>$ be a squarefree monomial  ideal generated in  degree $d$. If  $G_I=u_1, u_2, \ldots, u_m$
 is a line, then the following conditions are equivalent:

\begin{itemize}
\item[(a)]
$I$ has a linear  resolution;

\item[(b)]
For any $1\leq j\leq k \leq i \leq m$

\begin{center}
$F(u_k) \subseteq F(u_i) \cup F(u_j)$;
\end{center}

\item[(c)]
$ I $ is variable-decomposable ideal;

\item[(d)]
$I$ has  linear quotients.
\end{itemize}}
\end{Proposition}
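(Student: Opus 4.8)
The two implications $(c)\Rightarrow(d)$ and $(d)\Rightarrow(a)$ are already available for free: the first is Theorem~\ref{decqou} (a variable-decomposable, i.e.\ $0$-decomposable, ideal is decomposable and hence has linear quotients), and the second is Proposition~\ref{qouresu}. So the plan is to close the cycle by proving the two remaining arrows $(a)\Rightarrow(b)$ and $(b)\Rightarrow(c)$.

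For $(a)\Rightarrow(b)$ I would first observe that a $d$-linear resolution forces $\ker(\varphi)$ to be generated in degree $d+1$, i.e.\ $I$ has linear relations, so Proposition~\ref{009} applies and $G^{(u,v)}_I$ is connected for all $u,v\in G(I)$. Now fix $1\le j\le k\le i\le m$ and apply this with $u=u_j$, $v=u_i$. The vertex set of $G^{(u_j,u_i)}_I$ contains $u_j$ and $u_i$, and since $G_I$ is the line $u_1,\dots,u_m$, every connected induced subgraph is supported on a set of \emph{consecutive} vertices; a connected subgraph containing the two endpoints $u_j,u_i$ must therefore contain every vertex between them, in particular $u_k$. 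Thus $u_k\in V(G^{(u_j,u_i)}_I)$, which says precisely $F(u_k)\subseteq F(u_j)\cup F(u_i)$, i.e.\ $(b)$.

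The heart of the argument is $(b)\Rightarrow(c)$, which I would prove by induction on $m$, the case $m=1$ being trivial. The crucial point is to choose the right shedding variable. Since $u_1$ and $u_2$ are adjacent and have the same degree, $F(u_2)\setminus F(u_1)=\{b_1\}$ is a single variable (Remark~\ref{001}); I claim $x_{b_1}$ is shedding. Applying $(b)$ with $j=1$, $k=2$ and varying top index $i$ gives $F(u_2)\subseteq F(u_1)\cup F(u_i)$ for every $i\ge 2$, and since $b_1\in F(u_2)\setminus F(u_1)$ this forces $b_1\in F(u_i)$; hence $x_{b_1}$ divides $u_2,\dots,u_m$ but not $u_1$. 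Consequently $I_{x_{b_1}}=\langle u_1\rangle$ and $I^{x_{b_1}}=\langle u_2,\dots,u_m\rangle$. The latter is again a line (the induced subgraph of $G_I$ obtained by deleting the endpoint $u_1$, which carries no triangles, so no edges are removed) and it inherits $(b)$, hence is variable-decomposable by the induction hypothesis; meanwhile $\langle u_1\rangle$ is trivially variable-decomposable. Finally $x_{b_1}$ is shedding because for the unique generator $u_1\in G(I_{x_{b_1}})$ and $l=b_1\in F(x_{b_1})$ we have $u_2:u_1=x_{b_1}$ with $u_2\in G(I^{x_{b_1}})$. Since $\lvert F(x_{b_1})\rvert=1$, this exhibits $I$ as $0$-decomposable, i.e.\ variable-decomposable, completing the cycle.

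I expect the main obstacle to be exactly this choice of shedding variable and the verification that it works. The key structural fact behind it is that a variable absent from two or more \emph{consecutive} generators can never be a shedding variable: the generator sitting in the interior of such an absent-block has no neighbour in the line $G_I$ that contains that variable, so the colon condition $u_j:u_i=x_l$ cannot be met. What makes the proof succeed is that $(b)$ guarantees the newly added variable $b_1$ is absent from $u_1$ alone, so peeling off $u_1$ leaves a strictly shorter line to which the induction applies; the symmetric choice of the variable in $F(u_{m-1})\setminus F(u_m)$, peeling off $u_m$, would work equally well.
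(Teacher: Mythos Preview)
Your proof is correct and follows essentially the same route as the paper: the cycle $(a)\Rightarrow(b)\Rightarrow(c)\Rightarrow(d)\Rightarrow(a)$, with the shedding variable $x_l$ for $\{l\}=F(u_2)\setminus F(u_1)$ and induction on $m$ for $(b)\Rightarrow(c)$. The only cosmetic difference is in $(a)\Rightarrow(b)$: the paper argues directly via Remark~\ref{16} (if some $l\in F(u_k)$ lay outside $F(u_i)\cup F(u_j)$, then $x_l$ would divide both $x_{F(u_i)\setminus F(u_j)}$ and $x_{F(u_j)\setminus F(u_i)}$), whereas you invoke the packaged form Proposition~\ref{009} and then use the line structure of $G_I$ to force $u_k\in V(G^{(u_j,u_i)}_I)$; both arguments are equivalent.
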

\begin{proof}

$(a) \implies (b):$ Suppose, on the contrary, there exist $1\leq j < k < i\leq m $ and $ l
\in F(u_k) $ such that $l \notin F(u_i) \cup F(u_j)$. Since  $I$
  has a linear  resolution, we have  $ x_{F(u_i) \setminus  F(u_j)} e_j-x_{F(u_j) \setminus  F(u_i)}e_i
= f_{i}(x_{k_1}e_{i} - {x_{k_2}}^{'}e_{i+1})+
f_{i+1}(x_{k_2}e_{i+1} - {x_{k_3}}^{'}e_{i+2})+ \ldots+
f_{j-1}(x_{k_{j-1}}e_{j-1} - {x_{k_t}}^{'}e_{j})$.
 By Remark \ref{16}, $x_l \mid x_{F(u_j) \setminus  F(u_i)}$ and $x_l \mid x_{F(u_i) \setminus  F(u_j)}$ which is a contradiction.

$(b) \implies (c):$ Let $F(u_2)\setminus F(u_1)=\{l\}$. From the facts that  $F(u_2) \subseteq
F(u_1) \cup F(u_i)$,  $ l \in F(u_2)$ and $u_2 :u_1=x_l$, we conclude that $ l \in F(u_i)$ for all $2\leq i\leq m$, $I_{x_l}=<u_1>$ and $x_1$ is a shedding.  By induction on $m$,
$I^{x_l}$ is variable-decomposable, since  $I^{x_l}$ in a line of length $m-1$.

 $(c) \implies (d)$  follows by Theorem \ref{decqou}.

$(d) \implies (a)$ follows by Proposition \ref{qouresu}.

\end{proof}

\begin{Theorem} \label{01}{\em
If $I$ is a squarefree monomial  ideal which has linear
relations, then  $G_{I}$ is a tree  if and only if $\projdim(I)=1.$}
\end{Theorem}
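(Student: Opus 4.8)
The plan is to translate both sides of the equivalence into statements about the maps of the minimal free resolution and then invoke the cycle--syzygy dictionary recorded in Lemmas \ref{004} and \ref{005}. In the minimal graded resolution $0 \to F_p \to \cdots \to F_1 \xrightarrow{\psi} F_0 \xrightarrow{\varphi} I \to 0$, exactness gives $\ker(\psi)=\Im(F_2\to F_1)$, and by minimality $F_2=0$ if and only if $\ker(\psi)=0$. So, setting aside the degenerate principal case (where $G(I)$ is a single monomial, $G_I$ is one vertex, and $\projdim(I)=0$), we have $F_1\neq 0$, and hence the right-hand condition $\projdim(I)=1$ is equivalent to $\ker(\psi)=0$.

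Next I would use the hypothesis that $I$ has linear relations: by Lemma \ref{003} this already forces $G_I$ to be connected. Since a connected graph is a tree precisely when it is acyclic, the theorem reduces to proving that $G_I$ contains no cycle if and only if $\ker(\psi)=0$.

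I would prove this last equivalence in two directions, each supplied by one of the earlier lemmas. If $G_I$ has a cycle, Lemma \ref{004} attaches to it a nonzero element of $\ker(\psi)$, so $\ker(\psi)\neq 0$; contrapositively, $\projdim(I)=1$ (i.e.\ $\ker(\psi)=0$) forces $G_I$ to be acyclic, and with connectedness this makes $G_I$ a tree. Conversely, if $G_I$ is a tree it has no cycles, and since $\ker(\varphi)$ is generated by linear forms, Lemma \ref{005} says that each element of a minimal generating set of $\ker(\psi)$ arises from a cycle of $G_I$; with no cycles available this generating set is empty, so $\ker(\psi)=0$ and $\projdim(I)=1$.

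I do not anticipate a real obstacle, since the decisive work is already carried out in Lemmas \ref{003}, \ref{004}, and \ref{005}; what remains is the two bookkeeping identifications ``$\projdim(I)=1 \Leftrightarrow \ker(\psi)=0$'' and ``tree $\Leftrightarrow$ connected and acyclic.'' The only point needing care is the boundary case of a principal ideal: there $G_I$ is a single vertex (vacuously a tree) while $\projdim(I)=0$, so I would run the argument under the standing assumption $|G(I)|\geq 2$, equivalently $\ker(\varphi)\neq 0$, which guarantees $F_1\neq 0$ and keeps the equivalence honest.
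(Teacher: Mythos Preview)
Your proposal is correct and follows essentially the same route as the paper: both directions hinge on Lemmas \ref{003}, \ref{004}, and \ref{005}, translating ``$\projdim(I)=1$'' into ``$\ker(\psi)=0$'' and ``tree'' into ``connected and acyclic.'' Your treatment is in fact slightly more careful than the paper's, since you explicitly flag the degenerate principal case $|G(I)|=1$, which the paper silently ignores.
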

\begin{proof}
If $G_{I}$ is a tree, then $G_{I}$ has no cycle. Therefore by Lemma
\ref{005}, $\ker(\psi)=0$. Hence the linear  resolution of $I$
is of the form
\begin{center}
$0 \longrightarrow F_{1} \longrightarrow F_{0}\longrightarrow
I\longrightarrow 0 $
\end{center}
and $\projdim(I)=1$.

Conversely, assume that  $\projdim(I)=1$. Then $\ker(\psi)=0$ and by Lemma \ref{004}
$G_{I}$ has no cycle. Since $I$ has  linear relations, by Lemma \ref{003}, $G_{I}$ is a connected
graph.  Therefore $G_{I}$ is a tree.
\end{proof}

\begin{Proposition}{\label{pd=1}}{\em
Let $I$ be a squarefree monomial  ideal with  $\projdim(I)=1$. Then $I$ has a linear  resolution if and only if $G_{I}$ is a connected graph.}
\end{Proposition}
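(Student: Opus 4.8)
The plan is to prove the two implications separately; the forward one is immediate from an earlier lemma, while the converse is a short Betti-number count that uses the hypothesis $\projdim(I)=1$ in an essential way.

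For the implication that a linear resolution forces $G_I$ connected, I would simply observe that a $d$-linear resolution has $F_1=S(-d-1)^{\beta_1}$, so $\ker(\varphi)$ is generated in degree $d+1$; that is, $I$ has linear relations, and Lemma \ref{003} then yields that $G_I$ is connected. This half does not use $\projdim(I)=1$ at all.

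For the converse, assume $\projdim(I)=1$ and that $G_I$ is connected, and write the minimal graded free resolution as $0\to F_1\xrightarrow{\psi}F_0\xrightarrow{\varphi}I\to 0$ with $F_1$ free. First I would pin down the total first Betti number by a rank count: since $\rank(I)=1$ and $\rank(F_0)=\lvert G(I)\rvert=m$, exactness gives $\rank(F_1)=m-1$, hence $\beta_1=m-1$. Next I would bring in the graph: a connected graph on the $m$ vertices $G(I)$ has at least $m-1$ edges, and because $G_I$ carries no $C_3$ of type $(i)$ the linear strand of the resolution satisfies $\beta_{1(d+1)}=\lvert E(G_I)\rvert\ge m-1$. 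Since $\beta_{1(d+1)}$ is a single graded piece of $\beta_1$ we also have $\beta_{1(d+1)}\le\beta_1=m-1$, and the two inequalities collapse to $\beta_{1(d+1)}=\beta_1=m-1=\lvert E(G_I)\rvert$. Thus $F_1=S(-d-1)^{m-1}$ is concentrated in degree $d+1$, and the resolution is $d$-linear.

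The hard part is the single external input $\beta_{1(d+1)}=\lvert E(G_I)\rvert$: this equality is exactly why one prunes every $C_3$ of type $(i)$ when forming $G_I$, so that distinct edges contribute independent minimal linear syzygies; once it is granted, the rest is pure counting. Everything else is elementary, namely the bound $\lvert E\rvert\ge m-1$ for a connected graph and the identification of $\beta_1$ with $\rank(F_1)$ when $\projdim(I)=1$. It is worth noting that the forced equality $\lvert E(G_I)\rvert=m-1$ says $G_I$ is in fact a tree, which is consistent with Lemma \ref{004}: since $\projdim(I)=1$ gives $\ker(\psi)=0$, the graph can contain no cycle.
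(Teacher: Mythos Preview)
Your argument is correct and takes a genuinely different route from the paper.  The paper first observes (via Lemma \ref{004}) that $G_I$ must be a tree because $\ker(\psi)=0$, and then proves linear relations by hand: for each pair $u_i,u_j$ it walks the unique path in $G_I$, produces a syzygy $we_i-w'e_j$ from that path, and argues by contradiction that if this is not already the minimal binomial syzygy $x_{F(u_j)\setminus F(u_i)}e_i-x_{F(u_i)\setminus F(u_j)}e_j$, then the latter would furnish a basis element $g\in F_1$ with $hg-\sum f_{i_r}g_{i_r}$ a nonzero element of $\ker(\psi)$, contradicting $\projdim(I)=1$.  Your proof replaces this explicit syzygy chase by a Betti count: the rank identity forces $\beta_1=m-1$, the graph identity $\beta_{1(d+1)}=\lvert E(G_I)\rvert$ together with connectedness forces $\beta_{1(d+1)}\ge m-1$, and the squeeze gives $F_1=S(-d-1)^{m-1}$.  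The paper's approach is more constructive---it actually exhibits each minimal syzygy as coming from a path---while yours is shorter and more conceptual, leaning only on the preliminary equality $\beta_{1(d+1)}=\lvert E(G_I)\rvert$ (valid because the pruned $G_I$ has no $C_3$ of type~$(i)$) and the elementary bound $\lvert E\rvert\ge m-1$ for connected graphs.  Your closing remark that the squeeze also forces $\lvert E(G_I)\rvert=m-1$, hence $G_I$ is a tree, recovers the paper's starting observation as a by-product rather than an input.
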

\begin{proof}
Assume that $G_{I}$ is a connected graph. Since $\projdim(I)=1$, Lemma \ref{004} implies that $G_{I}$ has no cycle and, hence, it is a tree.  So it is enough to show $I$ has linear relations.  For $u_i, u_j \in G(I)$ there exist a unique  path between $u_i$ and $u_j$ in $G(I)$. Assume that $$w e_i -w^{'} e_j =f_{i_1}(x_{k_1}e_{i_1} - {x_{k_2}}^{'}e_{i_2})+
f_{i_2}(x_{k_2}e_{i_2} - {x_{k_3}}^{'}e_{i_3})+ \ldots+
f_{i_{t-1}}(x_{k_{t-1}}e_{i_{t-1}} - {x_{k_{t}}}^{'}e_{i_t})$$
be an element of $\ker(\varphi)$ which is
obtained from this path. If $w e_i -w^{'} e_j =x_{F(u_j) \setminus  F(u_i)}e_i-x_{F(u_i) \setminus  F(u_j)}e_j$, we are done.  So assume that the equality does not holds. Then $x_{F(u_j) \setminus  F(u_i)}e_i-x_{F(u_i) \setminus  F(u_j)}e_j $ is a minimal element in $\ker(\varphi)$. Hence, there exists $g \in F_1$ such that $\psi(g)=x_{F(u_j) \setminus  F(u_i)}e_i-x_{F(u_i) \setminus  F(u_j)}e_j $. Remark \ref{001} implies that there exists a monomial  $h \in S$ such that $h\psi(g)=w e_i -w^{'} e_j=f_{i_1}\psi(g_{i_2})+ \ldots +f_{i_{t-1}}\psi(g_{i_{t-1}})$. Therefore  $\psi(hg-f_{i_1}g_{i_2}-\cdots -f_{i_{t-1}}g_{i_{t-1}})=0$ and  $hg-f_{i_1}g_{i_2}-\cdots -f_{i_{t-1}}g_{i_{t-1}}\neq 0$, a contradiction.

The converse follows from Lemma \ref{003}.
\end{proof}

\begin{Proposition}\label{08}{\em
Let $I=<u_1,\ldots, u_m>$ be a squarefree monomial  ideal generated in degree $d$ which has  linear
quotients. Assume that
$G_I$ is a tree and $ v $ is a monomial in degree $d$ which is  a
leaf in $G_{<I,v>}$. Then the following
conditions are equivalent:

\begin{itemize}
\item[(a)]
$<I,v>$ has a linear  resolution;

\item[(b)] Let $u_i$ be the branch of $v$ and
 $ F(u_i) \setminus F(v)=\{l\}$, then $l \in \bigcap_{t=1}^m F(u_t)$;

\item[(c)]
$<I,v>$ has linear quotients.
\end{itemize}}
\end{Proposition}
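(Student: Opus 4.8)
The plan is to establish the cycle $(a)\Rightarrow(b)\Rightarrow(c)\Rightarrow(a)$. I would begin by fixing the set-up. Since $I$ has linear quotients it has a linear resolution by Proposition \ref{qouresu}, hence linear relations, and as $G_I$ is a tree Theorem \ref{01} yields $\projdim(I)=1$. Because $v$ is a leaf of $G_{<I,v>}$ adjacent only to its branch $u_i$, the graph $G_{<I,v>}$ arises from the tree $G_I$ by attaching a single new leaf and is therefore again a tree; in particular $v$ is a genuine new minimal generator, so $G(<I,v>)=\{u_1,\dots,u_m,v\}$. The edge $\{v,u_i\}$ comes from an equation $x_a v=x_b u_i$ with $a\neq b$, whence $F(u_i)$ and $F(v)$ differ in exactly one index on each side; this is what makes $F(u_i)\setminus F(v)=\{l\}$ a single index, with $l\in F(u_i)$ and $l\notin F(v)$.

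For $(a)\Rightarrow(b)$ I would argue as follows. Assuming $<I,v>$ has a linear resolution, it has linear relations, so by Proposition \ref{009} the induced subgraph $G_{<I,v>}^{(v,u_s)}$ is connected for every $s$. Fix $s$ (the case $s=i$ being trivial). Since $G_{<I,v>}$ is a tree, there is a unique path from $v$ to $u_s$, and a connected induced subgraph of a tree containing both $v$ and $u_s$ must contain every vertex of this path; as $v$ is a leaf on $u_i$ the path starts $v,u_i,\dots,u_s$, so $u_i$ is one of its vertices. By the definition of $G_{<I,v>}^{(v,u_s)}$ each such vertex $w$ satisfies $F(w)\subseteq F(v)\cup F(u_s)$, and taking $w=u_i$ gives $F(u_i)\subseteq F(v)\cup F(u_s)$. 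Since $l\in F(u_i)\setminus F(v)$, this forces $l\in F(u_s)$; as $s$ was arbitrary, $l\in\bigcap_{t=1}^m F(u_t)$.

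For $(b)\Rightarrow(c)$ I would relabel so that $u_1,\dots,u_m$ is an admissible order for $I$ and then test the order $u_1,\dots,u_m,v$ on $G(<I,v>)$. The colon ideals $<u_1,\dots,u_{j-1}>:u_j$ for $2\le j\le m$ are already generated by variables, so it remains to treat $<u_1,\dots,u_m>:v$. For monomial ideals this colon equals $<\,x_{F(u_t)\setminus F(v)}:1\le t\le m\,>$; the branch contributes $x_{F(u_i)\setminus F(v)}=x_l$, and condition $(b)$ together with $l\notin F(v)$ gives $l\in F(u_t)\setminus F(v)$ for every $t$, so $x_l$ divides each generator. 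Hence $<u_1,\dots,u_m>:v=<x_l>$ is generated by a single variable and the order $u_1,\dots,u_m,v$ is admissible, giving linear quotients for $<I,v>$. Finally $(c)\Rightarrow(a)$ is immediate from Proposition \ref{qouresu}.

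The essentially only content is $(a)\Rightarrow(b)$, and the step I expect to need the most care is the graph-theoretic one: converting ``linear relations'' into connectivity of $G_{<I,v>}^{(v,u_s)}$ via Proposition \ref{009}, and then justifying that a connected induced subgraph of a tree must contain the entire unique path between two of its vertices, which is exactly what drags the branch $u_i$ into it and pins down $l\in F(u_s)$. The remaining implications are short once the colon-ideal computation in $(b)\Rightarrow(c)$ is written out.
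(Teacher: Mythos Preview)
Your proposal is correct and follows the same cycle $(a)\Rightarrow(b)\Rightarrow(c)\Rightarrow(a)$ as the paper. The only noticeable difference is in $(a)\Rightarrow(b)$: the paper argues by contradiction, picking $j$ with $l\notin F(u_j)$, writing the relation $x_{F(u_j)\setminus F(v)}e_v-x_{F(v)\setminus F(u_j)}e_j$ along the unique path $v,u_i,\dots,u_j$, and applying Remark~\ref{16} to force $x_l$ to divide both $x_{F(u_j)\setminus F(v)}$ and $x_{F(v)\setminus F(u_j)}$. You instead invoke Proposition~\ref{009} to get connectivity of $G_{<I,v>}^{(v,u_s)}$ and then use the tree property (a connected induced subgraph of a tree contains the full path between any two of its vertices) to place $u_i$ inside it, giving $F(u_i)\subseteq F(v)\cup F(u_s)$ directly. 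Since Proposition~\ref{009} is itself proved via Remark~\ref{16}, the two arguments are equivalent at bottom; yours is slightly more packaged, the paper's slightly more hands-on. Your $(b)\Rightarrow(c)$ is the same as the paper's but with the colon computation $<u_1,\dots,u_m>:v=(x_l)$ written out explicitly, which is a welcome clarification.
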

\begin{proof}

$(a) \implies (b):$
Suppose, on the contrary, that there exist a $1\leq j\leq m$ such that $l \notin
F(u_j)$. Let $v, u_i=u_{i_1}, u_{i_2}, \ldots, u_{i_{t-1}}, u_{i_t}=u_j$ be the unique  path between $v$ and
$u_j$. Without loss of generality, we  may assume that $ l \in F(u_{i_r}) $
for all $r$, $1 \leq r \leq t-1$. Since $<I,v>$ has a linear resolution, we have
 $x_{F(u_j) \setminus  F(v)} e_v-x_{F(v) \setminus  F(u_j)}e_j =f_{0}(x_{i_0}e_{v} -
x_{{i_1}^{'}} e_{i_1}) + f_{1}(x_{i_1} e_{i_1} -
x_{{i_2}^{'}}e_{i_2}) + \ldots+ f_{t-1}(x_{i_{t-1}} e_{i_{t-1}} -
x_{{i_t}^{'}}e_{t}).$
 By Remark \ref{16}, we know that $x_l \mid x_{F(u_j) \setminus  F(v)}$ and $x_l \mid x_{F(v) \setminus  F(u_j)}$,
 this is a contradiction .

$(b) \implies (c):$  We now that there is an
admissible order  $ v_1, v_2, \ldots, v_m $ of $G(I)$. Since by our assumption  $\{l\}=F(u_i) \setminus
F(v)$ and  $l \in F(u_j) $ for any
$1\leq j \leq m$, we conclude that the order $ v_1, v_2, \ldots, v_m , v$ is an admissible order
for $<I,v>$.

$(c) \implies (a)$  follows from Proposition \ref{qouresu}.
\end{proof}

\begin{Proposition}\label{IL}{\em
 Let $I=<u_1,\ldots, u_m>$ be a squarefree monomial  ideal generated in
 degree $d$.  If  $G_I$ is a tree, then $I$ has a linear  resolution if and only if $L$ has a linear  resolution for all $L \subseteq I$, where $G(L)\subset G(I)$ and $G_L$ is a line.}
\end{Proposition}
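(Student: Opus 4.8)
The statement says $I$ (with $G_I$ a tree) has a linear resolution if and only if every sub-ideal $L$ with $G(L)\subset G(I)$ and $G_L$ a line has a linear resolution. Since $G_I$ is a tree, I can lean heavily on Theorem \ref{tree} and Proposition \ref{line}, together with the characterization in Proposition \ref{009} (linear relations $\iff$ $G^{(u,v)}_I$ connected for all $u,v$), and the fact (Theorem \ref{01}) that when $G_I$ is a tree and $I$ has linear relations, then $\projdim(I)=1$, so \emph{linear relations already force a full linear resolution}.

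\emph{The forward direction} ($I$ has a linear resolution $\Rightarrow$ each line-subideal $L$ has one) is the easier half. Suppose $G(L)=\{v_{1},\ldots,v_{s}\}\subset G(I)$ with $G_L$ a line $v_1,v_2,\ldots,v_s$. By Theorem \ref{tree}, condition (d), for any $1\le i\le j\le k\le s$ the unique path in $G_I$ between $v_i$ and $v_k$ satisfies $F(u_{\ell})\subseteq F(v_i)\cup F(v_k)$ along it; in particular, since $G_L$ is an induced line sitting inside the tree $G_I$, the path in $G_I$ between $v_i$ and $v_k$ is exactly $v_i,v_{i+1},\ldots,v_k$, so $F(v_j)\subseteq F(v_i)\cup F(v_k)$. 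This is precisely condition (b) of Proposition \ref{line}, which gives that $L$ has a linear resolution. So the forward direction reduces to transporting the ``betweenness'' condition (d) of Theorem \ref{tree} onto each line.

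\emph{The converse} ($L$ linear for all such lines $\Rightarrow$ $I$ linear) is where the real work lies, and I expect this to be the main obstacle. Here I would verify condition (d) of Theorem \ref{tree} for $I$: given arbitrary $u,v\in G(I)$ with unique path $u=u_1,u_2,\ldots,u_s=v$ in the tree $G_I$, I must show $F(u_j)\subseteq F(u_i)\cup F(u_k)$ for all $1\le i\le j\le k\le s$. The key observation is that the vertices $u_i,u_{i+1},\ldots,u_k$ of this path form an induced line in $G_I$ (a subpath of a tree is induced, since a tree has no cycles and thus no chords), so setting $L=\langle u_i,u_{i+1},\ldots,u_k\rangle$ gives a line-subideal $G_L$. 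By hypothesis $L$ has a linear resolution, so Proposition \ref{line}(b) applies to $L$ and yields exactly $F(u_j)\subseteq F(u_i)\cup F(u_k)$ for the relevant indices. Since $u,v$ were arbitrary, condition (d) of Theorem \ref{tree} holds for all pairs, and that theorem then gives that $I$ has a linear resolution.

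The crux of the argument is the observation that \textbf{every subpath of the tree $G_I$ is an induced line} whose associated ideal is one of the admissible test ideals $L$; this is what lets me feed Proposition \ref{line} back and forth between $I$ and its line-subideals. One technical point to check carefully is that $G_L$, as I have defined it via $G(L)\subset G(I)$, really is the induced subgraph on $\{u_i,\ldots,u_k\}$ and coincides with the first-syzygy graph of $L$ itself (i.e., that passing to the subideal does not create or destroy edges among these generators, which holds because adjacency $x u_a = y u_b$ is an intrinsic condition on the pair $(u_a,u_b)$ and the removal of $C_3$'s of type $(i)$ does not occur for an induced line). Once this identification is secure, both directions follow by matching condition (d) of Theorem \ref{tree} with condition (b) of Proposition \ref{line}.
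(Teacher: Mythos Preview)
Your argument has a circularity problem. In the paper's logical order, Proposition \ref{IL} comes \emph{before} Theorem \ref{tree}, and the proof of Theorem \ref{tree} uses Proposition \ref{IL} for the step $(e)\Rightarrow(a)$. So when in your converse you write ``condition (d) of Theorem \ref{tree} holds for all pairs, and that theorem then gives that $I$ has a linear resolution,'' you are invoking $(d)\Rightarrow(a)$, which in the paper passes through $(d)\Rightarrow(e)\Rightarrow(a)$, the last arrow being precisely the proposition you are proving. The forward direction's appeal to $(a)\Rightarrow(d)$ is less dangerous (that chain $(a)\Rightarrow(b)\Rightarrow(c)\Rightarrow(d)$ happens not to touch Proposition \ref{IL}), but you still should not cite a later theorem here.

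The repair is immediate and you already name the tools. Once you have the betweenness condition $F(u_j)\subseteq F(u_i)\cup F(u_k)$ along every path, take $i=1$, $k=s$ to see that the entire unique path from $u$ to $v$ lies in $G_I^{(u,v)}$; hence $G_I^{(u,v)}$ is connected, Proposition \ref{009} gives linear relations, and Theorem \ref{01} then forces $\projdim(I)=1$, so the resolution is linear. Run the same chain in the other direction for the forward implication. With this rewording your proof is complete and never touches Theorem \ref{tree}.

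Once de-circularized, your converse is genuinely different from the paper's. The paper does not verify a global criterion; it builds $I$ up one generator at a time. It starts from some line $J_0\subset I$ (linear quotients by Proposition \ref{line}), picks a leaf $v$ adjacent to some $u_{i_j}\in G(J_0)$, and for each $u_{i_r}\in G(J_0)$ applies the line hypothesis to the path $u_{i_r},\ldots,u_{i_j},v$ to force (via Proposition \ref{line}) the unique index $l\in F(u_{i_j})\setminus F(v)$ into $F(u_{i_r})$; Proposition \ref{08} then says $\langle J_0,v\rangle$ again has linear quotients, and one iterates. So the paper's route yields linear quotients constructively and needs Proposition \ref{08}, whereas yours is a shorter direct verification of the connectivity criterion via Propositions \ref{line}, \ref{009} and Theorem \ref{01}.
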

\begin{proof} Assume that $I$ has a linear resolution. Since $G_I$ is a tree, we have $\projdim (I)=1$. So if $L\subset I$ with $G(L)\subset G(I)$ and $G_L$ is a line, then $L$ has linear relations and $\projdim(L)=1$. Therefore $L$ has a linear resolution.

For the converse, by  our assumption there exists  a monomial ideal $J_0\subset I $ such that $G(J_0)=\{u_{i_1}, \ldots, u_{i_t}\}\subset G(I)$, $G_{J_0}$ is a line and has linear resolution. Therefore  $J_0$ has linear quotients.  Take $v \in
V(G_I) \setminus V(G_{J_0})$ such that $v$ and $u_{i_j}$ are adjacent in
$G_I$ for some $1 \leq j \leq t$. Set $F(u_{i_j})
\setminus F(v)=\{l\}$. Since $J_0$ has linear quotients there exist a path
between $u_{i_r}$ and $u_{i_j}$ for all $1 \leq r \leq t$. Therefore we have line $u_{i_r}, \ldots,
u_{i_j}, v $ in $G_I$. By our hypothesis $L=<u_{i_r}, \ldots, u_{i_j}, v>$ has a
linear resolution and  Proposition \ref{line} implies that $ F(u_{i_j})
\subseteq F(v) \cup F(u_{i_r})$.  Therefor $\{l\} \in F(u_{i_r})$
and Proposition \ref{08} implies that $J_1=<J_0, v>$ has linear
quotients. Now replace $J_0$ by $J_1$ and do the same procedure until we obtain $I$.
\end{proof}

\begin{Theorem}\label{tree}{\em
Let $I$ be a squarefree monomial ideal which is generated in one degree. If $G_I$ is a tree, then the following conditions are equivalent:
\begin{itemize}
\item[(a)]
$I$ has a linear resolution;
\item[(b)]
$I$ has  linear relations;
\item[(c)]
$G_I^{(u,v)}$ is a connected graph for all $u$, and $v$ in $G(I)$;
\item[(d)]
If $u=u_1,u_2,\ldots,u_s=v$ is the unique path  between $u$ and $v$ in $G_I$, then $F(u_j)\subset F(u_i)\cup F(u_k)$  for all $1\leq i\leq j\leq k \leq s$;
\item[(e)]
 $L$ has a linear  resolution for all $L \subseteq I$, where $G(L)\subset G(I)$ and $G_L$ is a line
\end{itemize}}
\end{Theorem}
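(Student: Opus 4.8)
The plan is to prove the cyclic chain of implications
$(a)\Rightarrow(b)\Rightarrow(c)\Rightarrow(d)\Rightarrow(e)\Rightarrow(a)$, leaning heavily on the results already assembled in this section for the tree case. Several of these arrows are essentially immediate from earlier statements, so I would dispatch those first and concentrate the real work on the one genuinely new link.

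First, $(a)\Rightarrow(b)$ is trivial: a linear resolution begins with a linear first syzygy module, so in particular $I$ has linear relations. The equivalence $(b)\Leftrightarrow(c)$ is exactly Proposition~\ref{009} applied verbatim, with no extra hypothesis on $G_I$ needed, so I would simply cite it. For $(c)\Rightarrow(d)$ I would use the tree hypothesis in an essential way: since $G_I$ is a tree, between any $u$ and $v$ there is a \emph{unique} path $u=u_1,\dots,u_s=v$, and every vertex of $G_I^{(u,v)}$ must connect to this path inside $G_I^{(u,v)}$. Connectedness of $G_I^{(u,v)}$ then forces the path itself to lie in $G_I^{(u,v)}$, which by definition of the induced subgraph means $F(u_j)\subseteq F(u)\cup F(v)$ for each intermediate $u_j$; applying this to the subpaths between $u_i$ and $u_k$ gives the full statement of $(d)$. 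The implication $(e)\Rightarrow(a)$ is precisely the converse direction of Proposition~\ref{IL}, so I would invoke that result directly.

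The main work — and the step I expect to be the principal obstacle — is $(d)\Rightarrow(e)$, where I must descend from the containment condition on the whole tree to linearity of every line subideal. Given $L\subseteq I$ with $G(L)\subset G(I)$ and $G_L$ a line, say $G_L=u_{i_1},\dots,u_{i_r}$, I would verify condition $(b)$ of Proposition~\ref{line} for $L$: for indices $p\leq q\leq s$ along this line I need $F(u_{i_q})\subseteq F(u_{i_p})\cup F(u_{i_s})$. The subtlety is that $G_L$ is the first syzygy graph of $L$ and need not coincide with the induced subgraph of $G_I$ on $\{u_{i_1},\dots,u_{i_r}\}$; however, because $G_I$ is a tree, the line $G_L$ must in fact realize the unique path in $G_I$ between its endpoints, and then condition $(d)$ applied to $u=u_{i_p}$, $v=u_{i_s}$ delivers exactly the required containment. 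Once $(b)$ of Proposition~\ref{line} holds, that proposition gives $L$ a linear resolution, completing $(d)\Rightarrow(e)$.

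To close the cycle I still owe either $(b)\Rightarrow$ something or a direct route back to $(a)$; the cleanest way is to route through Proposition~\ref{IL} one more time. Having established $(e)$, Proposition~\ref{IL} gives $(a)$, so the implications $(a)\Rightarrow(b)\Leftrightarrow(c)\Rightarrow(d)\Rightarrow(e)\Rightarrow(a)$ together close the loop and prove all five conditions equivalent. The only place where care is genuinely required is confirming that a line subgraph $G_L$ of the tree $G_I$ sits as a geodesic path of $G_I$, so that the global condition $(d)$ transfers to the local line condition of Proposition~\ref{line}; I expect the tree structure to make this routine, but it is the hinge of the argument.
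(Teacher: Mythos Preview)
Your proposal is correct and follows exactly the cycle $(a)\Rightarrow(b)\Rightarrow(c)\Rightarrow(d)\Rightarrow(e)\Rightarrow(a)$ that the paper uses, invoking Proposition~\ref{009}, Proposition~\ref{line}, and Proposition~\ref{IL} at the same places. One minor point: your worry in $(d)\Rightarrow(e)$ that $G_L$ ``need not coincide with the induced subgraph of $G_I$'' is unfounded here, since adjacency of two monomials depends only on the pair itself, so $G_L$ is automatically the induced subgraph of $G_I$ on $G(L)$ (the $C_3$-of-type-(i) edge removal is irrelevant when $G_I$ is a tree); thus the line $G_L$ is literally a path in $G_I$ and condition~$(d)$ transfers immediately.
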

\begin{proof}
$(a)\implies (b) $  is trivial.
$(b)\implies (c)$ follows from Proposition \ref{009}.
$(c)\implies (d)$ :  for all $1\leq i\leq j \leq k\leq s$, $G_I^{(u_i,u_k)}$ is connected and $u_j$ is a vertex of this graph. Therefore $F(u_j)\subset F(u_i)\cup F(u_k)$.
$(d)\implies (e)$ follows from Proposition \ref{line}.
$(e)\implies (a)$ follows from Proposition \ref{IL}.
\end{proof}
\begin{Theorem}\label{019}{\em
 Let $I$ be a squarefree monomial  ideal generated in degree $d$. If $G_I$ is a tree,
 then the following are equivalent:

\begin{itemize}
\item[(a)]
$I$ has a linear  resolution;

\item[(b)]
$ I $ is variable-decomposable ideal;

\item[(c)]
$I$ has linear quotients.
\end{itemize}}

\end{Theorem}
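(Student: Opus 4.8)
The plan is to prove the cycle of implications $(b)\implies(c)\implies(a)\implies(b)$, disposing of the first two steps by invoking the general machinery and concentrating all the work on the last. For $(b)\implies(c)$, a variable-decomposable ideal is by definition $0$-decomposable, hence decomposable, so Theorem \ref{decqou} immediately gives that $I$ has linear quotients. For $(c)\implies(a)$, since $I$ is generated in a single degree $d$, Proposition \ref{qouresu} converts linear quotients into a $d$-linear resolution. Thus the entire content of the theorem is the implication $(a)\implies(b)$: a linear resolution must force variable-decomposability once $G_I$ is a tree.

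For $(a)\implies(b)$ I would induct on $m=\mid G(I)\mid$, the case $m=1$ being trivial. For $m\geq 2$, since $G_I$ is a tree it has a leaf $v$; let $u$ be its unique neighbour. As $u$ and $v$ are adjacent squarefree generators of the same degree $d$, their supports differ in exactly one element on each side, so I may write $F(u)\setminus F(v)=\{l\}$ with $u:v=x_l$. The central claim is that $x_l$ is a shedding \emph{variable} for $I$, and the tool that makes this work is condition $(d)$ of Theorem \ref{tree}, which is available precisely because $I$ has a linear resolution.

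The key sub-step is to show $l\in F(w)$ for every generator $w\neq v$. For $w=u$ this is the definition of $l$; for any other $w$, the unique path in $G_I$ from $v$ to $w$ begins $v,u,\dots,w$, so condition $(d)$ applied with $u$ an intermediate vertex gives $F(u)\subseteq F(v)\cup F(w)$, and since $l\in F(u)\setminus F(v)$ this forces $l\in F(w)$. Hence $v$ is the only generator not divisible by $x_l$, so $I_{x_l}=<v>$, which is variable-decomposable, while $I^{x_l}=<\,w\in G(I):w\neq v\,>$ has graph obtained from $G_I$ by deleting the leaf $v$, hence is again a tree. Because paths among the surviving vertices are unchanged, $I^{x_l}$ inherits condition $(d)$ along every path, so by Theorem \ref{tree} it too has a linear resolution, and the induction hypothesis makes it variable-decomposable. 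The shedding condition for $x_l$ holds as well: for the single generator $v\in G(I_{x_l})$ and the index $l\in F(x_l)$, the neighbour $u$ lies in $G(I^{x_l})$ and satisfies $u:v=x_l$. Therefore $x_l$ witnesses that $I$ is $0$-decomposable, i.e.\ variable-decomposable, closing the induction.

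The step I expect to be the main obstacle is establishing $l\in F(w)$ for \emph{all} $w\neq v$, i.e.\ that a single variable $x_l$ cleanly separates the leaf from the rest of the ideal. This is exactly the point where both hypotheses are indispensable: the tree structure provides a leaf to peel off together with a unique path to every other generator, and the linear resolution supplies the nesting condition $(d)$ that pushes $l$ into each remaining support. Without this, there is no reason one variable should divide all other generators — indeed, in the cyclic case of Theorem \ref{02} one is forced to shed $n=m$ distinct variables successively rather than isolate a leaf. Once the claim is secured, checking that $I^{x_l}$ remains within the hypotheses of Theorem \ref{tree} and verifying the shedding condition are routine bookkeeping.
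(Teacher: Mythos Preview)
Your proof is correct and follows essentially the same strategy as the paper: pick a leaf $v$ with neighbour $u$, show that the variable $x_l$ with $\{l\}=F(u)\setminus F(v)$ divides every other generator, and use this to peel off $v$ as $I_{x_l}$ while inducting on $I^{x_l}$. The only cosmetic difference is that you invoke the path-nesting condition $(d)$ of Theorem~\ref{tree} to force $l\in F(w)$, whereas the paper argues directly via Proposition~\ref{009} and connectedness of $G_I^{(v,w)}$; these are equivalent tools already proved in the paper.
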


\begin{proof}
$(a) \implies (b):$  We know that $\projdim(I)=1$, since $G_I$ is a tree and $I$ has a linear resolution. With out loss of generality we may assume  that
$u_1$ is a vertex of degree one in $G_I$ and  $u_2$ be the unique neighborhood of $u_1$  in $G_I$.  Set $F(u_2)
\setminus F(u_1)=\{ l \}$.   Proposition
\ref{009} implies that $G_I^{(u_1,u_i)}$ is a connected graph for all $u_i$.
If $l\not\in F(u_i)$ for some $i>2$, then $F(u_2)\nsubseteq F(u_1) \cup F(u_i)$ and $u_2 \notin
V(G^{(u_1,u_i)}_I )$.  Therefore
$G^{(u_1,u_i)}_I $ is not connected, a
contradictions. Hence $I_{x_l}=\{u_1\}$  and $G(I^{x_l})=G(I)\setminus \{u_1\}$. It is easy to see that $x_l$ is a shedding variable. Since  $
G_{I^{x_l}}$ is a tree and has linear relations,  by
induction on $|G(I)|$, we conclude that $I^{x_l}$ is
variable-decomposable.  Therefore $ I $ is variable-decomposable
ideal.
$(b) \implies (c)$ follows by Theorem \ref{decqou}.
$(c) \implies (a)$ follows by Proposition \ref{qouresu}.
\end{proof}
\begin{Remark}{\em
In Theorem \ref{019}, we show that if $G_I$ is a tree and $I$ has a linear resolution, then $I$ has  linear quotients. In the following we present  an admissible order for $I$ in this case. we choose  order $u_{r_1}, \ldots, u_{r_m}$ for the elements of $G(I)$
such that the subgraph on vertices $\{u_{r_1}, \ldots, u_{r_t}\}$ is a connected graph for  $ 1 \leq t \leq m$.
We show tat this order is an admissible order. If this order is not an admissible order, then there exists a $j < i$ such that
for all $k <i$ with $ F(u_{r_k})\setminus F(u_{r_i})=\{l\}$, we have $l \notin F(u_{r_j})$.
Since there is a path  $u_{r_j}, \ldots, u_{r_k}, u_{r_i}$
Remark \ref{16} implies  that $x_l \mid x_{F(u_{r_i}) \setminus  F(u_{r_j})}$ and $x_l \mid x_{F(u_{r_j}) \setminus  F(u_{r_i})}$, a contradiction.}
\end{Remark}

A simplicial complex $ \Delta $ over a set of vertices
$[n] = \{1, \ldots, n\}$ is a collection of subsets of $[n]$  with the property
that $\{i\}\in \Delta $ for all $i$ and if $F \in \Delta$, then all subsets
of $F$ are also in $\Delta$. An element of $\Delta$ is called a face and the dimension of a face $F$ is defined as $|F|-1$, where $|F|$ is the number of vertices of $F$.
The maximal faces of $\Delta$ under inclusion are called facets  and the set of all facets  denoted by $\mathcal{F}(\Delta)$. The dimension of the simplicial complex $\Delta$ is the
maximal dimension of its facets. A subcollection of $\Delta$ is a simplicial complex whose
facets are also facets of $\Delta$. In other words a simplicial complex generated by a
subset of the set of facets of $\Delta$. Let $\Delta$ be a simplicial complex on $[n]$ of dimension $d-1$.
For each $0 \leq i \leq d-1$  the $i$th skeleton of $\Delta$ is the
simplicial complex  $\Delta^{(i)}$ on $[n]$ whose faces are those
faces $F$ of $\Delta$ with $\mid F\mid \leq i+1$.
 We say that a simplicial complex $\Delta$ is
connected if for facets $F$ and $G$ of $\Delta$ there exists a sequence of facets $F = F_0, F_1,
\ldots, F_{q-1}, F_q = G$ such that $F_i \cap F_{i+1} \neq
\emptyset$ for $i=0, \ldots, q-1$. Observe that $\Delta$ is
connected if and only if $\Delta^{(1)} $ is
connected.

Let $\Delta$ be a simplicial complex on $[n]$.
The Stanley-Reisner ideal of $\Delta$ is a squarefree monomial
ideal $I_\Delta=<x_{i_1}\ldots x_{i_p} \mid \{x_{i_1}, \ldots, x_{i_p}\}
\notin \Delta >$. Conversely, let $I \subseteq k[x_1, \ldots, x_n]$ be a squarefree monomial ideal.
The Stanley-Reisner complex of $I$ is the simplicial
complex $\Delta $ on $[n]$ such that $I_\Delta=I$.
The Alexander dual of $\Delta$ is the simplicial complex
$\Delta^{\vee}=<\{x_1, \ldots, x_n\} \setminus F \mid F \notin
\Delta >.$

\begin{Definition}{\em \cite{SF}
Let $ \Delta$ be a simplicial complex. A facet $F \in \mathcal{F}( \Delta)$ is
said to be a leaf of $ \Delta$  if either $F$ is the only facet of $ \Delta$ or
there exists a facet $G \in  \mathcal{F}( \Delta)$ with $G \neq F$, called a branch $F$,
such that $H \cap F  \subseteq G \cap F $ for all
$H \in \mathcal{F}( \Delta)$ with $H \neq F$. A connected simplicial complex $ \Delta$
is a tree if every nonempty subcollection of $ \Delta$ has a leaf. If $ \Delta$ is
not necessarily  connected, but every subcollection has a leaf, then $ \Delta$ is  called a
forest.}
\end{Definition}
If $\Delta$  is a simplicial tree, then we can always order the
facets $F_1, \ldots, F_q$ of $\Delta$ such that $F_i$ is a leaf of
the induced subcomplex $<F_1, \ldots, F_{i-1}>$. Such an ordering on
the facets of $\Delta$  is called a leaf order.
A simplicial complex $\Delta$ is a
quasi-forest if $\Delta$ has a leaf order. A connected quasi-forest
is called a quasi-tree.

Consider an arbitrary monomial ideal $I = <u_1, \ldots, u_m >$.
For any subset $\sigma$ of $\{1, \ldots, m \}$, we write
$u_{\sigma}$ for the least common multiple of $\{u_i \mid i \in
\sigma \}$ and set $\a_{\sigma} = \deg u_{\sigma}$.  Let $G(I)=\{u_1, \ldots, u_m\}$. The Scarf complex $ \Delta_I $ is the
collection of all subsets of $\sigma\subset\{1, \ldots, m\}$ such that $u_{\sigma}$ is unique.
As first noted by Diane Taylor \cite{T}, given a monomial  ideal $I$
in a polynomial ring $S$ minimally generated by monomials $u_1,
\dots, u_m$, a free resolution of $I$ can be given by the simplicial
chain complex of a simplex with $m$ vertices. Most often Taylor's
resolution is not minimal. The Taylor complex $F_{\Delta_I}$
supported on the Scarf complex $\Delta_I$ is called the algebraic
Scarf complex of the monomial ideal $I$.
 For  more information about Taylor
complex we refer to \cite{MS}.

The following results  will be used later.
\begin{Lemma} \label{0017}\cite{MS}{\em
If $I$ is a monomial ideal in $S$, then every free resolution of
$S/I$ contains the algebraic Scarf complex $F_{\Delta_I}$ as a
subcomplex.}
\end{Lemma}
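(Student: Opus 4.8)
The plan is to establish the statement in two moves: first reduce from an arbitrary free resolution to the minimal one, and then show that each Scarf basis element survives the passage from the Taylor resolution to the minimal resolution. For the reduction, recall that any free resolution $F_\bullet$ of $S/I$ splits as $F_\bullet \cong M_\bullet \oplus T_\bullet$, where $M_\bullet$ is the minimal free resolution of $S/I$ and $T_\bullet$ is a direct sum of trivial complexes $0 \to S \xrightarrow{1} S \to 0$. In particular $M_\bullet$ is a direct summand, hence a subcomplex, of $F_\bullet$; so it suffices to embed $F_{\Delta_I}$ as a subcomplex of $M_\bullet$.

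For the main step I would work inside the Taylor complex, whose basis elements $e_\sigma$ are indexed by all subsets $\sigma \subseteq \{1,\ldots,m\}$ and carry the multidegree of $u_\sigma$, with differential sending $e_\sigma$ to a signed combination of the $e_{\sigma\setminus i}$ with coefficients $u_\sigma/u_{\sigma\setminus i}$. Since $\Delta_I$ is a simplicial complex and is closed under this differential, the subspace $F_{\Delta_I}=\bigoplus_{\sigma\in\Delta_I} S\,e_\sigma$ is a genuine subcomplex of the Taylor complex. The minimal resolution $M_\bullet$ is obtained from the Taylor complex by repeatedly cancelling pairs of basis elements in adjacent homological degrees that are joined by a unit (scalar) entry of the differential (consecutive cancellation / multigraded Gaussian elimination). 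The key observation is that a unit entry can connect only two basis elements of the \emph{same} multidegree, while every basis element occurring anywhere in this process has a multidegree of the form $u_\tau$ for some subset $\tau$. For $\sigma\in\Delta_I$ the monomial $u_\sigma$ is attained by no subset other than $\sigma$ itself, so $e_\sigma$ is the unique basis element of its multidegree and can never be paired for cancellation. Hence every Scarf basis element persists into $M_\bullet$.

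Finally I would verify that the differential induced on the surviving Scarf elements coincides with the Scarf differential. This is forced by multidegrees: in $M_\bullet$ the coefficient of $e_{\sigma\setminus i}$ in $\partial e_\sigma$ must be a monomial of multidegree $u_\sigma/u_{\sigma\setminus i}$, which is exactly the Taylor entry, and no additional terms can appear because the remaining basis elements sit in different multidegrees. Thus the subcomplex of $M_\bullet$ spanned by $\{e_\sigma : \sigma\in\Delta_I\}$ is isomorphic to $F_{\Delta_I}$, and combined with the splitting above this shows $F_{\Delta_I}$ is a subcomplex of every free resolution of $S/I$. I expect the main obstacle to be the bookkeeping in the consecutive-cancellation step, namely confirming that the Gaussian elimination producing $M_\bullet$ never creates a \emph{new} unit entry incident to a Scarf element; this again reduces to the uniqueness of $u_\sigma$ together with the fact that all intermediate multidegrees lie among the $u_\tau$, so no element of multidegree $u_\sigma$ other than $e_\sigma$ is ever available for a cancelling pair.
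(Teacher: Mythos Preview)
The paper does not prove this lemma at all; it is quoted verbatim as a known result from Miller--Sturmfels \cite{MS}, so there is no ``paper's proof'' to compare against. Your outline is essentially the standard argument one finds in that reference, and the core idea---that each $\sigma\in\Delta_I$ has a multidegree $u_\sigma$ shared with no other Taylor generator, hence $\beta_{|\sigma|,u_\sigma}=1$ and a corresponding basis element must appear in the minimal resolution---is correct.

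One point deserves tightening. In your final paragraph you assert that ``no additional terms can appear because the remaining basis elements sit in different multidegrees.'' Sitting in a different multidegree is not by itself enough: a basis element $g$ in homological degree $|\sigma|-1$ could have multidegree $u_\tau$ strictly dividing $u_\sigma$ and still receive a nonzero coefficient in $\partial f_\sigma$. What actually closes this off is the uniqueness of $u_\sigma$ once more: if $u_\tau \mid u_\sigma$ then $u_{\sigma\cup\tau}=u_\sigma$, forcing $\tau\subseteq\sigma$; and since every face of $\sigma$ again lies in $\Delta_I$, one has $\beta_{j,u_\tau}=0$ for $j\neq|\tau|$, so the only candidates in degree $|\sigma|-1$ are the $f_{\sigma\setminus i}$. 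With this in hand your multidegree argument goes through.

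A stylistic suggestion: the consecutive-cancellation picture you use is correct but forces you to worry about ``new unit entries'' appearing during elimination. It is cleaner to bypass this entirely and compute $\beta_{i,u_\sigma}=\dim_k\operatorname{Tor}_i(S/I,k)_{u_\sigma}$ directly from the Taylor complex tensored with $k$; in multidegree $u_\sigma$ that complex is a single copy of $k$ concentrated in homological degree $|\sigma|$, which immediately gives $\beta_{|\sigma|,u_\sigma}=1$ and $\beta_{j,u_\sigma}=0$ otherwise, with no bookkeeping required.
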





\begin{Proposition}\cite[Corollary 4.7]{SF2} \label{15}{\em
 Every simplicial tree is the Scarf complex of a monomial ideal $I$ and
  supports a minimal resolution of $I$.}
\end{Proposition}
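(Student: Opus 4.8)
The plan is to prove both statements together by induction on the number $q$ of facets of the simplicial tree $\Delta$, using a leaf order $F_1,\dots,F_q$, so that $F_q$ is a leaf of $\Delta$ with some branch $G$. The leaf condition $H\cap F_q\subseteq G\cap F_q$ for every facet $H\neq F_q$ shows at once that each vertex $v\in F_q\setminus G$ lies in no other facet: if $v\in H$ then $v\in H\cap F_q\subseteq G$, a contradiction. Deleting these \emph{free} vertices turns $F_q$ into a subface of $G$ and leaves the subcollection $\Delta'=\langle F_1,\dots,F_{q-1}\rangle$, which is again a simplicial tree. The base case $q=1$ is a simplex: here I would take one variable $z_\sigma$ for each nonempty subset $\sigma$ of the vertex set and set $u_i=\prod_{\sigma\ni i}z_\sigma$, so that all least common multiples are pairwise distinct; then every subset has a unique $\lcm$, the Scarf complex $\Delta_I$ is the full simplex $\Delta$, and the Taylor complex---having pairwise distinct multidegrees $\a_\sigma$---is already the minimal free resolution of $S/I$.

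For the inductive step I would, by the induction hypothesis, realise $\Delta'$ as the Scarf complex of an ideal $I'$ whose algebraic Scarf complex $F_{\Delta_{I'}}$ is the minimal resolution of $S/I'$, and then adjoin, one at a time, a new generator for each free vertex $v$ of $F_q$, labelled by a monomial that records the branch $G$ together with a private slack variable. Writing $I=\langle I',u\rangle$ for one such step, I would build the resolution of $S/I$ as the mapping cone of $F_{\Delta_{I'}}$ along $0\to S/(I':u)\,(-\deg u)\xrightarrow{\,\cdot u\,} S/I'\to S/I\to 0$. The decisive point, forced by the leaf property, is that $(I':u)$ is generated by a subset of the variables, because the free vertex $v$ meets the earlier generators only through the branch $G$; consequently $S/(I':u)$ is resolved by a Koszul (Taylor) complex, and the cone acquires exactly one new generator together with the syzygies indexed by the faces of $\Delta$ that contain $v$. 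Hence the resulting complex is supported on $\Delta$, giving $\Delta_I\cong\Delta$, and since the multidegrees $\a_\sigma$ of the faces of $\Delta_I$ are pairwise distinct the mapping cone carries no unit entries and is therefore minimal.

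Minimality also follows intrinsically: by Lemma \ref{0017} the algebraic Scarf complex $F_{\Delta_I}$ is a subcomplex of the minimal free resolution of $S/I$, so as soon as $F_{\Delta_I}$ is itself acyclic it must coincide with that minimal resolution, and acyclicity is precisely what the mapping cone supplies. The hard part will be the bookkeeping hidden in the inductive step, namely proving that $(I':u)$ is variable-generated; this is equivalent to checking both that no subset outside $\Delta$ acquires a unique $\lcm$ (so $\Delta_I$ is not too large) and that every face of $\Delta$ through $v$ does (so $\Delta_I$ is not too small). This is exactly where the simplicial-tree hypothesis is indispensable: for an arbitrary complex the colon ideal need not be variable-generated, the lower intervals of the lcm-lattice need not be acyclic, and the Scarf complex may fail to resolve. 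The leaf and branch structure is what guarantees, at each stage of the leaf order, the clean behaviour that makes the cone minimal and forces the Scarf complex to be exactly $\Delta$.
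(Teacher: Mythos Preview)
The paper does not prove this proposition at all: it is quoted as \cite[Corollary 4.7]{SF2} and used as a black box in the converse direction of Theorem~\ref{011}. There is therefore no ``paper's own proof'' to compare against.

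As for the viability of your sketch, the overall shape---induction along a leaf order, mapping cones, Koszul resolution of $S/(I':u)$---is the right idea and is close in spirit to Faridi's argument. Two points, however, are not yet pinned down. First, you say you will ``adjoin, one at a time, a new generator for each free vertex $v$ of $F_q$'', but the intermediate complexes obtained this way (with only some of the free vertices of $F_q$ restored) must themselves be simplicial trees for your induction to proceed; you have not checked this, and it requires an argument. Second, and more importantly, you never actually define the monomial $u$. The entire argument hinges on $(I':u)$ being generated by a subset of the variables---indeed by \emph{exactly} those variables whose Koszul faces recover the faces of $\Delta$ through $v$---and the phrase ``records the branch $G$ together with a private slack variable'' is not a construction. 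Faridi's proof works because she writes down an explicit monomial labeling (built from variables indexed by the facets) and then verifies directly that the lcms are distinct precisely on the faces of $\Delta$; your proposal needs a comparably explicit $u$ before the colon--ideal claim can be checked.
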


In \cite{FH} Faridi and Hersey studied minimal free resolution of
squarefree monomial ideals with projective dimension $1$.  They prove the following.

\begin{Theorem} \label{010}{\em
 Let I be a squarefree monomial ideal in a polynomial ring S and $\Delta $ be a simplicial complex such that $I=I_\Delta$.
 Then the following statements are equivalent:

\begin{itemize}
\item[(a)]
$\projdim I \leq 1$;

\item[(b)]
 $\Delta^{\vee}$ is a quasi-forest;

\item[(c)]
$ S/I$ has a minimal free resolution supported on a graph-tree.
\end{itemize}}
\end{Theorem}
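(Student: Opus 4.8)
The plan is to prove the two substantive equivalences $(a)\Leftrightarrow(b)$ and $(a)\Leftrightarrow(c)$ separately, handling the first through Alexander duality and the second through the Scarf complex. Throughout I write $I=I_\Delta$, and I use that for a proper nonzero ideal $\projdim I=\projdim(S/I)-1$, so that condition $(a)$ is exactly $\projdim(S/I)\le 2$.

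For $(a)\Leftrightarrow(b)$ I would argue via regularity. Terai's duality formula $\projdim(S/I_\Delta)=\reg(I_{\Delta^\vee})$ (see \cite{HH}) reduces $(a)$ to the assertion $\reg(I_{\Delta^\vee})\le 2$. Since $I_{\Delta^\vee}$ is squarefree, it is minimally generated by the monomials attached to the minimal nonfaces of $\Delta^\vee$, whose degrees are the sizes of those nonfaces; as $\reg$ bounds the maximal generator degree from above, $\reg(I_{\Delta^\vee})\le 2$ forces every minimal generator to have degree at most $2$. On its genuine vertex set this says $\Delta^\vee$ is a flag complex, i.e.\ the clique complex $\Delta(G)$ of its $1$-skeleton $G$, and then $I_{\Delta^\vee}$ is the edge ideal $I(\overline G)$ of the complementary graph. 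By Fröberg's theorem \cite{Fro}, $I(\overline G)$ has a $2$-linear resolution---equivalently $\reg(I_{\Delta^\vee})=2$---precisely when $G$ is chordal. Finally the characterization of quasi-forests as exactly the clique complexes of chordal graphs (see \cite{HH}) identifies ``$G$ chordal'' with ``$\Delta^\vee$ is a quasi-forest'', giving $(a)\Leftrightarrow(b)$.

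For $(a)\Leftrightarrow(c)$ the direction $(c)\Rightarrow(a)$ is immediate: a minimal resolution supported on a $1$-dimensional complex (a graph) has $F_i=0$ for $i\ge 3$, so $\projdim(S/I)\le 2$ and $\projdim I\le 1$. For $(a)\Rightarrow(c)$ I would begin with the minimal resolution $0\to F_2\to F_1\to S\to S/I\to 0$ and invoke Lemma \ref{0017}, which puts the algebraic Scarf complex $F_{\Delta_I}$ inside it as a subcomplex. Because the ambient resolution has length $2$, the Scarf complex $\Delta_I$ can carry no face of dimension $\ge 2$, so $\Delta_I$ is a graph on the vertex set $G(I)$. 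It then remains to show that the minimal resolution is genuinely supported on $\Delta_I$ and that $\Delta_I$ is a tree: connectedness follows from the path/first-syzygy reasoning of the Lemma \ref{003} type (any two generators are linked through relations), while acyclicity follows because a cycle in the support would produce a nonzero class in $F_3$, contradicting $\projdim(S/I)\le 2$; Proposition \ref{15} confirms that such a tree does support a minimal resolution.

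The main obstacle I anticipate is $(a)\Rightarrow(c)$: upgrading ``$\Delta_I$ embeds in the minimal resolution'' to ``the minimal resolution is supported on $\Delta_I$'', together with the proof that $\Delta_I$ is acyclic. The delicate point is controlling the minimal first syzygies so that no hidden $1$-cycle survives into a second syzygy, and it is exactly here that the homological hypothesis---projective dimension $1$ forbidding both $2$-faces and $1$-cycles in the Scarf complex---must be turned into a rigid combinatorial statement. The secondary, milder dependency is the external machinery (Terai's formula, Fröberg's theorem, and the quasi-forest/chordality dictionary) that bridges the homological condition $(a)$ and the purely combinatorial condition $(b)$.
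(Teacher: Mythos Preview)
The paper does not prove Theorem \ref{010} at all: it is quoted verbatim as a result of Faridi and Hersey \cite{FH} and then used as a black box in the proof of Theorem \ref{011}. So there is no ``paper's own proof'' to compare your proposal against.

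That said, a brief assessment of your sketch on its merits. Your argument for $(a)\Leftrightarrow(b)$ via Terai's formula, Fr\"oberg's theorem, and the identification of quasi-forests with clique complexes of chordal graphs is correct and is in fact the standard route; all three ingredients are in \cite{HH}. For $(a)\Leftrightarrow(c)$, the direction $(c)\Rightarrow(a)$ is fine, and your use of Lemma \ref{0017} to force $\dim\Delta_I\le 1$ is also correct. The gap you flag in $(a)\Rightarrow(c)$ is genuine, and two of the steps you gesture at do not work as written: Lemma \ref{003} assumes linear relations, which you do not have here, so it does not give connectedness of $\Delta_I$; and the claim that ``a cycle in the support would produce a nonzero class in $F_3$'' conflates the simplicial $1$-cycles of $\Delta_I$ with second syzygies of $I$---a cycle in the Scarf graph need not, a priori, survive to the minimal resolution. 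Closing these gaps (and showing that the Scarf complex genuinely supports the minimal resolution, not merely embeds in it) is exactly the content of the Faridi--Hersey paper, so at that point you would essentially be reproving \cite{FH}.
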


If $\Delta$ is a simplicial complex and $ \dim \Delta=1$,
then the geometric realization of $\Delta$ is a graph. In this situation we say
$\Delta$ is a graph.

\begin{Lemma} \label{0010}{\em
Let $I$ be a monomial ideal. Set    $G=
\Delta_I^{(1)}$. If $G_I$ is a $c_3$-free graph, then $G_I$ is a subgraph
of $G$.}
\end{Lemma}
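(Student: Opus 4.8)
I need to show that if $G_I$ is $C_3$-free (has no $3$-cycle), then $G_I$ is a subgraph of $G=\Delta_I^{(1)}$, the $1$-skeleton of the Scarf complex. The two graphs share the same vertex set $G(I)=\{u_1,\dots,u_m\}$, so the content of the statement is purely about edges: every edge of $G_I$ must already be an edge of $\Delta_I^{(1)}$. An edge $\{u_i,u_j\}$ of $G_I$ arises when there are variables $x,y$ with $xu_i=yu_j$; an edge $\{i,j\}$ of $\Delta_I^{(1)}$ is present precisely when the pair $\sigma=\{i,j\}$ belongs to the Scarf complex, i.e.\ when $u_\sigma=\lcm(u_i,u_j)$ is the label of no other subset of $\{1,\dots,m\}$. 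So the whole problem reduces to the following implication: \emph{if $\{u_i,u_j\}\in E(G_I)$ and $G_I$ has no $C_3$, then $\lcm(u_i,u_j)$ is a unique multidegree in the Scarf complex.}

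**The plan.**
The plan is to fix an edge $\{u_i,u_j\}$ of $G_I$ and assume, for contradiction, that $\lcm(u_i,u_j)$ is \emph{not} a unique label, so that some other generator $u_k$ (with $k\neq i,j$) satisfies $u_k\mid \lcm(u_i,u_j)$, forcing $\lcm(u_i,u_k)$ or $\lcm(u_j,u_k)$ to equal $\lcm(u_i,u_j)$ as well and thereby collapsing the uniqueness of this multidegree. First I would translate the edge condition using Remark \ref{001}: since $xu_i=yu_j$ with $x,y$ single variables (the edge has ``length one''), writing $u_i=x_{F_i}$ and $u_j=x_{F_j}$ gives $x=x_{F_j\setminus F_i}$ and $y=x_{F_i\setminus F_j}$, each a single variable; so $F_i$ and $F_j$ differ in exactly one element on each side, and $\lcm(u_i,u_j)=x_{F_i\cup F_j}$ has support $F_i\cup F_j$ of size $d+1$. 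The next step is to show that any third generator $u_k$ with $u_k\mid \lcm(u_i,u_j)$ would give $F_k\subseteq F_i\cup F_j$ with $|F_k|=d$; a short combinatorial analysis of which single element of $F_i\cup F_j$ is omitted by $F_k$ shows $u_k$ must be adjacent to both $u_i$ and $u_j$ in $G_I$, producing a triangle $u_i,u_j,u_k$.

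**Where the obstacle lies.**
The heart of the argument is the combinatorial step: given $F_i,F_j$ that differ by one variable on each side and a $d$-subset $F_k\subseteq F_i\cup F_j$ distinct from both, I must check that $\{u_i,u_k\}$ and $\{u_j,u_k\}$ are edges of $G_I$. Writing $F_i\cup F_j=A\cup\{a,b\}$ where $a\in F_i\setminus F_j$, $b\in F_j\setminus F_i$ and $A=F_i\cap F_j$, the only $d$-subsets of this $(d+1)$-set are obtained by deleting one element; $F_i$ deletes $b$, $F_j$ deletes $a$, and any other $F_k$ deletes some element of $A$. In that case $F_i\setminus F_k$ and $F_k\setminus F_i$ are each singletons, so $u_i,u_k$ are adjacent, and likewise $u_j,u_k$ are adjacent — giving the forbidden $C_3$. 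This contradicts the $C_3$-freeness hypothesis, so no such $u_k$ exists, $\lcm(u_i,u_j)$ is a unique Scarf label, and $\{i,j\}\in\Delta_I$. I expect the main subtlety to be bookkeeping the squarefree supports carefully and confirming that the triangle produced is a genuine $3$-cycle of $G_I$ (not collapsed by a repeated vertex), but once the support arithmetic is laid out this is routine, and the conclusion that every edge of $G_I$ survives in $\Delta_I^{(1)}$ follows immediately.
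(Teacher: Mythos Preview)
Your proposal is correct and follows essentially the same line as the paper's proof: assume an edge $\{u_i,u_j\}$ of $G_I$ fails to lie in $\Delta_I^{(1)}$, extract a third generator $u_k$ with $F(u_k)\subseteq F(u_i)\cup F(u_j)$, and observe that since $F(u_i)\cup F(u_j)$ has size $d+1$ the set $F(u_k)$ is obtained by deleting one element of $A=F(u_i)\cap F(u_j)$, making $u_k$ adjacent to both $u_i$ and $u_j$ and producing a forbidden $C_3$. Your write-up is somewhat more explicit about the support arithmetic than the paper's, but the argument is the same.
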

\begin{proof}
One has $V( G)=V(G_I)=G(I)$. Let $\{ u_{i}, u_{j}\} \in E( G_I)$. Assume that $\{ u_{i}, u_{j}\}$ is not an edge in $G$.
Then there exits $\sigma\subset\{1, \ldots, m\}$ such that $\{i,j\} \neq \sigma$ and
 $u_{\{i,j\}}=u_{\sigma}$. Let $r \in \sigma \setminus
\{i,j\}$,  then  $F(u_r) \subseteq F(u_i) \cup F(u_j) $. We may assume
that $F(u_i)=A \cup \{i\}$ and $F(u_j)=A \cup \{j\}$. Hence
$F(u_r) \subseteq A \cup \{i, j\}$. Therefore  $\{
u_{i}, u_{r}\}$ and $\{ u_{j}, u_{r}\}$ are in $E( G_I)$, which is a
contradiction.
\end{proof}

\begin{Remark} \label{14}{\em
Let $I$ be a monomial ideal which has a linear resolution and $u_i, u_j\in G(I)$. Assume that
$u_i=u_{i_1},..,u_{i_{t-1}},u_{i_t}=u_j$ is a path  between $u_i$ and
$u_j$. Then  $ x_{F(u_j)\setminus
F(u_i)}e_{i}-x_{F(u_i)\setminus F(u_j)}e_{j} \in \ker(\varphi)$
and is a linear combination of linear forms which comes from the given path. By Remark \ref{4} and Remark \ref{16},
$F(u_{i_r}) \subseteq F(u_i) \bigcup F(u_j)$, for all $i\leq r\leq j$.
}
\end{Remark}

\begin{Theorem} \label{011}{\em
 Assume that  $I$ is a squarefree monomial  ideal generated in degree $d $. If $G_I$ is a tree, then $I$
has a linear resolution if and only if $G_I \cong  \Delta_I $.}
\end{Theorem}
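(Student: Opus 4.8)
The plan is to prove both implications through the interplay between the algebraic Scarf complex and the minimal free resolution, exploiting the fact (Theorem \ref{01}) that a tree $G_I$ carrying linear relations forces $\projdim I = 1$. Throughout set $m = |G(I)|$, so that the tree $G_I$ has exactly $m-1$ edges.

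For the forward direction, assume $I$ has a linear resolution. Then $I$ has linear relations, so by Theorem \ref{01} we have $\projdim I = 1$, hence $\projdim S/I = 2$ and the minimal free resolution of $S/I$ takes the shape $0 \to F_2 \to F_1 \to F_0 \to S/I \to 0$. By Lemma \ref{0017}, the algebraic Scarf complex $F_{\Delta_I}$ sits inside this resolution as a subcomplex; since the resolution vanishes in homological degree $\geq 3$, the complex $\Delta_I$ can carry no face of dimension $\geq 2$, so $\Delta_I = \Delta_I^{(1)}$ is a graph. Moreover each $1$-face of $\Delta_I$ contributes an independent basis element to $F_2$, so the number of edges of $\Delta_I$ is at most $\rank F_2 = \beta_1(I)$. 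Because the resolution is linear and $G_I$ is $C_3$-free, one has $\beta_1(I) = \beta_{1,d+1}(I) = |E(G_I)| = m-1$. On the other hand, $G_I$ being $C_3$-free, Lemma \ref{0010} gives $G_I \subseteq \Delta_I^{(1)}$, so $\Delta_I$ has at least $m-1$ edges. The two bounds coincide, forcing $\Delta_I^{(1)} = G_I$; together with the absence of higher faces this yields $\Delta_I \cong G_I$.

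For the converse, assume $G_I \cong \Delta_I$. Since $G_I$ is a tree, $\Delta_I$ is a simplicial tree (a one-dimensional connected complex every subcollection of which has a leaf), so by Proposition \ref{15} the Scarf complex $\Delta_I$ supports the minimal free resolution of $I$. As $\Delta_I$ is one-dimensional, this resolution has the form $0 \to F_1 \to F_0 \to I \to 0$, with $F_0$ generated by the vertices in degree $d$ and $F_1$ by the edges, the basis element of an edge $\{i,j\}$ placed in degree $\a_{\{i,j\}} = \deg \lcm(u_i,u_j)$. The essential point is that every $1$-face of $\Delta_I$ corresponds under the isomorphism to an edge of $G_I$, and an edge of $G_I$ records a linear syzygy $x u_i = y u_j$ with $x,y$ variables, so $\lcm(u_i,u_j)$ has degree $d+1$. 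Hence $F_1 = S(-d-1)^{m-1}$ and the minimal resolution reads $0 \to S(-d-1)^{m-1} \to S(-d)^{m} \to I \to 0$, which is $d$-linear.

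The main obstacle is the forward direction: one must check carefully that the embedding of the algebraic Scarf complex into the minimal resolution is what simultaneously annihilates all $2$-faces and caps the number of Scarf edges at $\beta_1(I)$, and that the linearity of the resolution is precisely what makes this cap equal to $|E(G_I)| = m-1$. Matching this upper bound against the lower bound furnished by Lemma \ref{0010} is the crux that pins down $\Delta_I = G_I$ exactly, and it is here that the tree hypothesis (no $C_3$, hence $\beta_{1,d+1} = |E(G_I)|$) is indispensable.
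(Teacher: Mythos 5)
Your proof is correct, but it takes a genuinely different route from the paper's in both directions. For the forward implication the paper, after invoking Theorem \ref{01}, Lemma \ref{0017} and Lemma \ref{0010} exactly as you do, establishes the reverse containment $\Delta_I^{(1)}\subseteq G_I$ directly: a Scarf edge $\{u_r,u_t\}$ outside $E(G_I)$ is expanded along a path of linear syzygies, and Remark \ref{14} then forces $u_\sigma=u_{\{r,t\}}$ for the vertex set $\sigma$ of that path, contradicting the uniqueness of lcm's that defines $\Delta_I$. You replace this with the count $|E(\Delta_I)|\le \beta_1(I)=\beta_{1,d+1}(I)=|E(G_I)|=m-1\le |E(\Delta_I)|$, which is valid (distinct Scarf faces have distinct lcm's, hence contribute basis elements in distinct multidegrees of the minimal resolution) and arguably cleaner, at the price of leaning on the multigraded refinement of Lemma \ref{0017} rather than on the paper's explicit syzygy manipulations. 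For the converse the paper also passes through Proposition \ref{15} to get $\projdim I\le 1$, but then argues by contradiction, manufacturing a nonzero element of $\ker\psi$ from a hypothetical nonlinear minimal first syzygy; you instead read linearity directly off the shifts of the resolution supported on $\Delta_I$, which is shorter and produces the explicit resolution $0\to S(-d-1)^{m-1}\to S(-d)^{m}\to I\to 0$. The one point you should make explicit there is that ``$G_I\cong\Delta_I$'' is being used as equality of complexes on the common vertex set $G(I)$, so that each Scarf edge $\{i,j\}$ really satisfies $xu_i=yu_j$ and hence $\a_{\{i,j\}}=d+1$; if only an abstract isomorphism were assumed, the labelled equality is recovered anyway from Lemma \ref{0010} together with the equality of edge numbers, so nothing essential is lost.
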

\begin{proof}
Assume that $I$ has a linear resolution.
By Theorem \ref{01}  $ \projdim I \leq 1$ and by
Lemma \ref{0017}  $ \dim \Delta_I=1$. By Lemma \ref{0010}  $ G_I$ is a subgraph of  $\Delta_I $. Now let $ \{ u_{r},
u_{t}\} \in \Delta_I $.
Suppose that $\{ u_{r}, u_{t}\} \notin E( G_I)$. Since $x_{F(u_t)\setminus F(u_r)}e_{r}-x_{F(u_r)\setminus F(u_t)}e_{t} \in  \ker(\varphi)  $ and $I$ ha a linear resolution, we have
 $$ x_{F(u_t)\setminus F(u_r)}e_{r}-x_{F(u_r)\setminus F(u_t)}e_{t}=f_r(x_{k_r} e_r -
x_{{k_{r+1}}^{'}}e_{i_{r+1}})+  \ldots + f_{j}(x_{k_{j}}e_{i_{j}} -
x_{{k_t}^{'}} e_{t}).$$
Set $\sigma=\{r, i_{r+1}, \ldots, i_j, t\} $. By  Remark \ref{14}  $ m_\sigma=m_{\{r, t\}}$, which is a contradiction.

Conversely  assume that $G_I \cong  \Delta_I $. Therefore $\Delta_I$ ia a tree.
By Proposition \ref{15}  $\Delta_I$ supports a
minimal free  resolution of $I$. Therefore by Theorem \ref{010}  $ \projdim I
\leq 1$. If $ \projdim I = 0$, then $I$ is
principal monomial ideal and, hence,
 $I$ has a linear resolution.

 Let $ \projdim I = 1$. If $I$ has not a linear resolution, then there exists
 $ x_{F(u_j)\setminus F(u_i)}e_{i}-x_{F(u_i)\setminus F(u_j)}e_{j}\in  \ker(\varphi)$
such that this element belong to a minimal set of generators of $\ker(\varphi)$ and
 $\deg ( x_{F(u_i)\setminus F(u_j)}) \geq 2$.
There exists a unique path  $u_{i}=u_{i_1}, u_{i_2}, \ldots, u_{i_{t-1}},
u_{i_{t}}=u_{j}$ between $u_i$ and $u_j$ in $G_I$. By Lemma \ref{002} and Remark \ref{001}
there exists a monomial $w$ such that\\ $w(x_{F(u_j)\setminus F(u_i)} e_{i}-x_{F(u_i)\setminus
F(u_j)}e_{j})=f_{i_1}(x_{k_1}e_{i_1} - {x_{k_2}}^{'}e_{i_2})+
f_{i_2}(x_{k_2}e_{i_2} - {x_{k_3}}^{'}e_{i_3})+ \ldots+
f_{i_{t-1}}(x_{k_{t-1}}e_{i_{t-1}} - {x_{k_t}}^{'}e_{i_t})$.\\
Set $\psi (g)= x_{F(u_j)\setminus F(u_i)}e_{i}-x_{F(u_i)\setminus
F(u_j)}e_{j},\; \psi(g_1)=x_{k_1}e_{i_1} - {x_{k_2}}^{'}e_{i_2},
\ldots,\; \psi(g_{t-1})\\=x_{k_{t-1}}e_{i_{t-1}}
-{x_{k_t}}^{'}e_{i_t}$. Then
$ \psi(\Sigma_{r=1}^{t-1}f_{i_r}g_r-wg)=0$.  Since $\Sigma_{r=1}^{t-1}f_{i_r}g_r-wg\neq 0$, one has $\ker( \psi) \neq
0$, which is a contradiction.
\end{proof}
\section{linear resolution of some classes of monomial ideals}
\label{sec:3}
In this section as applications of our results, we determine linearity of resolution for some classes of monomial ideals.

Let $I$ be a squarefree Cohen-Macaulay monomial ideal of codimension $2$ and $\Delta$ be a simplicial complex such that $I=I_{\Delta}$. In \cite{HSY} the authors showed that $\Delta$ is shellable. Moreover one can see that $\Delta$ is vertex decomposable, see \cite{AS}.  Now assume that  $I$ is  generated in one degree.  Since $\projdim(I)=1$,  as a corollary of Proposition \ref{pd=1} and Theorem \ref{019}, we have:

\begin{Corollary}\label{cm2}{\em
Let $I$ be a squarefree Cohen-Macaulay monomial ideal of codimension $2$. Then $I$ has a linear  resolution if and only if $G_{I}$ is a connected graph. Indeed in this case $G_I$ is a tree and the following conditions are equivalent:
\begin{itemize}
\item[(i)] $I$ has a linear resolutions;
\item[(ii)] $I$ has linear quotients;
\item[(iii)] $I$ is variable decomposable.
\end{itemize} }
\end{Corollary}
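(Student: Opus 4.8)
The plan is to reduce the statement entirely to the two tools already assembled, namely Proposition \ref{pd=1} and Theorem \ref{019}, once the projective dimension has been pinned down. First I would record the standard homological input that a squarefree Cohen--Macaulay monomial ideal $I$ of codimension $2$ satisfies $\projdim(I)=1$: since $S/I$ is Cohen--Macaulay of dimension $n-2$, the Auslander--Buchsbaum formula gives $\projdim(S/I)=n-\depth(S/I)=n-(n-2)=2$, whence $\projdim(I)=\projdim(S/I)-1=1$. This is precisely the hypothesis of Proposition \ref{pd=1}, and together with the standing assumption that $I$ is generated in one degree it yields at once the first assertion: $I$ has a linear resolution if and only if $G_I$ is a connected graph.

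Next I would justify the parenthetical claim that, in this case, $G_I$ is in fact a tree. Suppose $G_I$ is connected (equivalently, $I$ has a linear resolution). Because $\projdim(I)=1$ we have $\ker\psi=0$, so by Lemma \ref{004} the graph $G_I$ contains no cycle; a connected acyclic graph is a tree. Alternatively one may argue through Theorem \ref{01}: a linear resolution forces linear relations, and linear relations together with $\projdim(I)=1$ give that $G_I$ is a tree. Either route places us squarely in the hypothesis ``$G_I$ is a tree'' of Theorem \ref{019}.

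Finally, with $G_I$ a tree and $I$ generated in a single degree, Theorem \ref{019} applies verbatim and delivers the equivalence of (i), (ii) and (iii), completing the proof. I do not anticipate any genuine obstacle here: the whole substance of the corollary is the identification $\projdim(I)=1$, after which the statement is a straightforward assembly of Proposition \ref{pd=1}, Lemma \ref{004} (or Theorem \ref{01}) and Theorem \ref{019}. The only point demanding a line of care is checking that the single-degree generation hypothesis is genuinely in force, since Theorem \ref{019} is phrased for ideals generated in one degree $d$; this is guaranteed by the standing assumption on $I$ recorded just before the corollary.
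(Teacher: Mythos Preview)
Your proposal is correct and follows essentially the same route as the paper. The paper's argument is the single sentence preceding the corollary: ``Since $\projdim(I)=1$, as a corollary of Proposition \ref{pd=1} and Theorem \ref{019}, we have:''; your write-up merely fills in the Auslander--Buchsbaum computation for $\projdim(I)=1$ and makes explicit the passage to a tree (which is already contained in the proof of Proposition \ref{pd=1}).
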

The following example  shows that there are Cohen-Macaulay monomial ideal of codimenstion $2$ with and without a linear resolution.
\begin{Example}{\em
$(i)$- let $I=(xy,yz,zt)\subset K[x,y,z,t]$ . Then $I$ is Cohen-Macaulay monomial ideal of codimenstion $2$ with  a linear resolutions

$(ii)$- let $I=<xy, zt> \subset K[x,y,z,t]$.
 It is easy to see that $I$ is a Cohen-Macaulay monomial ideal of codimension $2$ which has not a linear  resolution.}
\end{Example}

\begin{Remark}{\em
Let $I$ be a squarefree monomial ideal. If $G_I $ is a complete graph, then the following statements hold.
\begin{itemize}
\item[(a)]
$I$ has a linear  resolution;
\item[(b)]
$ I $ is variable-decomposable ideal;
\item[(c)]
$I$ has  linear quotients.
\end{itemize}}
\end{Remark}
In \cite{CD} Conca and De Negri introduced path ideal
of a graph. Let $G$ be a directed
graph on vertex set $\{1, \ldots, n\}$. For integer $2 \leq t \leq n$,
a sequence ${i_1}, \ldots, {i_t}$ of distinct vertices of $G$ is called a path of length $t$, if there are $t-1$ distinct directed edges $e_1, \ldots, e_{t-1}$, where $e_j$ is an edge from ${i_j}$ to ${i_{j+1}}$. The path ideal of $G$ of length
$t$ is the monomial ideal $ I_t(G) = <\prod_{j=1}^t x_{i_j} >$, where ${i_1}, \ldots, {i_t}$ is a path of length $t$ in $G$. Let $C_n$  denote the $n$-cycle on vertex set $V=\{1, \ldots, n\}$.  In \cite [proposition 4.1]{FV} it is shown that $S/I_2(C_n)$ is vertex decomposable/ shellable/ Cohen-Macaulay if and only if $n=3$ or $5$.  Saeedi, Kiani and Terai in \cite{SKT} showed that if $2<t\leq n$, then $S/I_t(C_n)$ is sequentially Cohen-Macaulay if and only if $t=n$, $t=n-1$ or $t=(n-1)/2$. In \cite{AS} it is shown that  $S/I_t(C_n)$ is Cohen-Macaulay if and only if it is shellable and if and only if  $I_t(C_n)$ is vertex decomposable.

It is easy to see that if $t<n-1$, then $G_{I_t(C_n)}\cong C_n$. Hence, by Theorem \ref{02}  $ I_t(C_n) $ has a linear  resolution if and only if $t=n-2$. For $t=n-1$, since $G_{I_t(G)}$ is a complete graph,   $ I_t(G) $ has a linear  resolution. Also, in these cases having a linear resolution is equivalent to have linear quotients and it is equivalent to variable decomposability of $I_t(C_n)$.
\begin{Corollary}\label{cn}{\em
 $ I_t(C_n) $ has a linear  resolution if and only if $t=n-2$ or $t=n-1$. Moreover the following conditions are equivalent:
\begin{itemize}
\item[(a)]
$I_t(G)$ has a linear  resolution;
\item[(b)]
$ I_t(G) $ is variable-decomposable ideal;
\item[(c)]
$I_t(G)$ has  linear quotients.
\end{itemize}}

\end{Corollary}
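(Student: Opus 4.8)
The plan is to prove Corollary \ref{cn} by reducing everything to the structural results already established for cycles (Theorem \ref{02}) and complete graphs, so the real work is combinatorial analysis of the first syzygy graph $G_{I_t(C_n)}$. First I would record the generators explicitly: for the $n$-cycle $C_n$ on vertices $\{1,\dots,n\}$ (read cyclically), the path ideal $I_t(C_n)$ is generated by the $n$ squarefree monomials $u_i = x_i x_{i+1}\cdots x_{i+t-1}$ for $i=1,\dots,n$, indices taken modulo $n$. Each $u_i$ has degree $t$, so the ideal is generated in a single degree and all earlier machinery applies.

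\smallskip

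The heart of the argument is to determine the isomorphism type of $G_{I_t(C_n)}$ as a function of $t$. Two generators $u_i$ and $u_j$ are adjacent precisely when there exist variables $x,y$ with $x u_i = y u_j$, i.e.\ when $|F(u_i)\setminus F(u_j)| = 1$. Since $F(u_i)$ is a block of $t$ consecutive vertices on the cycle, I would compute the overlap $|F(u_i)\cap F(u_j)|$ in terms of the cyclic distance between $i$ and $j$ and check exactly when the symmetric difference has size two. The plan is to verify that for $t < n-1$ the only adjacencies are between cyclically consecutive blocks (those differing by a single shift), which yields $G_{I_t(C_n)}\cong C_n$; the claim $G_{I_t(C_n)}\cong C_n$ for $t<n-1$ is already asserted in the discussion preceding the corollary, so I may simply invoke it. Once $G_{I_t(C_n)}\cong C_m$ with $m=n\geq 4$, Theorem \ref{02} gives the equivalence of (a), (b), (c) together with the combinatorial criterion in part (b) of that theorem, and a direct check shows that criterion holds exactly when $t=n-2$.

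\smallskip

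For the boundary case $t=n-1$ I would argue separately: here every generator $u_i$ is missing exactly one variable, so any two generators have symmetric difference of size two and $G_{I_{n-1}(C_n)}$ is the complete graph $K_n$. By the Remark immediately preceding the corollary (complete $G_I$ forces a linear resolution, linear quotients, and variable-decomposability), all three properties hold, giving linearity in this case. Conversely, for $t=n$ the ideal is principal and the range $2\le t\le n$ together with the cycle analysis shows no other value of $t$ produces linearity: when $t\le n-3$ one has $G_{I_t(C_n)}\cong C_n$ with $t\neq n-2$, so Theorem \ref{02}(b) fails and there is no linear resolution. Assembling these cases yields that $I_t(C_n)$ has a linear resolution if and only if $t=n-2$ or $t=n-1$, and in both cases the equivalence of (a), (b), (c) follows from Theorem \ref{02} and the complete-graph Remark respectively.

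\smallskip

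I expect the main obstacle to be the careful cyclic bookkeeping in the adjacency computation: correctly handling the wrap-around so that the classification $G_{I_t(C_n)}\cong C_n$ (for $t<n-1$) versus $K_n$ (for $t=n-1$) is airtight, and ensuring that the three-cycles of type $(i)$ that get removed in the definition of the first syzygy graph do not alter the isomorphism type. Since the paper has already stated the isomorphism $G_{I_t(C_n)}\cong C_n$ for $t<n-1$ in the surrounding text, I would lean on that and keep the proof short, spending the remaining effort verifying the criterion of Theorem \ref{02}(b) pins down $t=n-2$ and dispatching the complete-graph case.
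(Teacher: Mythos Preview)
Your proposal is correct and follows essentially the same route as the paper: the argument in the text immediately preceding the corollary is exactly to observe that $G_{I_t(C_n)}\cong C_n$ for $t<n-1$ and then invoke Theorem~\ref{02} to single out $t=n-2$, while for $t=n-1$ the graph $G_{I_t(C_n)}$ is complete and the Remark on complete $G_I$ handles that case. Your additional care about the cyclic bookkeeping and about type~$(i)$ triangles is fine but not needed beyond what the paper already asserts.
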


\begin{Corollary}\label{Ln}{\em
Let $L_n$ be a line on vertex set $\{1, \ldots, n\}$ and $ I_t(L_n)$ be the  path ideal of $L_n$. Then $ I_t(L_n) $ has a linear  resolution if and only if $t\geq \frac{n}{2}$.}
\end{Corollary}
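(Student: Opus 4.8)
The plan is to reduce the statement to Proposition~\ref{line}: I will first show that $G_{I_t(L_n)}$ is a line, and then translate the containment condition~(b) of that proposition into the inequality $t\ge n/2$ by elementary interval arithmetic.

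First I would record the generators explicitly. A path of length $t$ in $L_n$ is a run of $t$ consecutive vertices, so $G(I_t(L_n))=\{u_1,\dots,u_m\}$ with $m=n-t+1$ and $u_i=x_ix_{i+1}\cdots x_{i+t-1}$; thus $F(u_i)=\{i,i+1,\dots,i+t-1\}$ is an integer interval of length $t$. Next I would identify $G_I$. For two generators of equal degree $t$, the relation $xu_i=yu_j$ with $x,y$ variables forces, by Remark~\ref{001}, that $|F(u_i)\setminus F(u_j)|=|F(u_j)\setminus F(u_i)|=1$; so $u_i$ and $u_j$ are adjacent exactly when their supports differ in a single element on each side. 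For the intervals above one computes $|F(u_i)\cap F(u_j)|=t-(j-i)$ whenever $0\le j-i\le t$, so the supports differ by exactly one element precisely when $j-i=1$. Hence the only edges are the pairs $\{u_i,u_{i+1}\}$ (realized by $x_{i+t}u_i=x_iu_{i+1}$), and $G_I$ is the line $u_1-u_2-\cdots-u_m$, in particular a tree.

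Now Proposition~\ref{line} applies: $I_t(L_n)$ has a linear resolution if and only if $F(u_k)\subseteq F(u_i)\cup F(u_j)$ for all $j\le k\le i$. The heart of the argument is the interval analysis of this condition. If $i-j\le t$, then $F(u_j)\cup F(u_i)=[j,\,i+t-1]$ is a single interval containing every $F(u_k)=[k,\,k+t-1]$ with $j\le k\le i$, so the condition holds. If instead $i-j\ge t+1$, then $F(u_j)\cup F(u_i)$ has a gap at $\{j+t,\dots,i-1\}$, and a short check shows that every intermediate generator $u_k$ with $j<k<i$ satisfies $F(u_k)\cap\{j+t,\dots,i-1\}\neq\emptyset$, so the containment fails.

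Consequently condition~(b) of Proposition~\ref{line} holds for all triples if and only if no pair $j<i$ in $\{1,\dots,m\}$ has $i-j\ge t+1$, that is, if and only if the largest available gap $m-1$ obeys $m-1\le t$. Substituting $m=n-t+1$ turns this into $n-t\le t$, i.e.\ $t\ge n/2$, which is the desired characterization. The only delicate point is the interval bookkeeping in the gap argument; everything else is a direct application of the machinery already established.
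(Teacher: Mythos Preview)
Your proposal is correct and follows essentially the same route as the paper's own proof: identify $G_{I_t(L_n)}$ as the line $u_1,\dots,u_{n-t+1}$, invoke Proposition~\ref{line}, and reduce condition~(b) to the inequality $t\ge n/2$ by interval arithmetic on the supports. The paper is terser---it only exhibits the single violating triple $(u_1,u_2,u_m)$ when $t<n/2$ and asserts the positive direction as ``clear''---whereas you carry out the full gap analysis for arbitrary triples; but the structure and the key lemma are identical.
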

\begin{proof}
Let $L_n=1, \ldots, n$ be a line. It is easy to see that $G_{ I_t(L_n)}\cong L_{n-t+1}$ and $I_t(L_n)=<\prod_{i=1}^t x_i, \ldots, \prod_{i=t+1}^{2t} x_i, \ldots, \prod_{i=n-t+1}^n x_i>$.
If $n-t+1 > t+1$, then $F(u_2) \nsubseteq F(u_1) \bigcup F(u_n)$. Hence Theorem \ref{05} implies that $I_t(G) $ has not a linear  resolution. If $n-t+1 \leq t+1$, i.e $t\geq \frac{n}{2}$, then it is clear that for any $1\leq j\leq k \leq i \leq m$ one has:
\begin{center}
$F(u_k) \subseteq F(u_i) \bigcup F(u_j)$.
\end{center}
Therefore, by Proposition \ref{line}, $ I_t(G) $ has a linear  resolution and the equivalent conditions hold.
\end{proof}

\section{Cohen-Macaulay  simplicial complex}
\label{sec:4}
Let $\Delta=< F_1, \ldots,  F_m>$ be a  simplicial complex on vertex set $[n]$ and $I_{\Delta}\subset k[x_1,\ldots,x_n] $ be its Stanley-Reisner ideal. For each $F\subset [n]$, we set $\bar{F_i}=[n]\setminus F_i$ and $P_F=(x_j:\; j\in F)$.  It is well known that $I_{\Delta}=\bigcap_{i=1}^{m} P_{\bar{F_i}}$ and $I_{\Delta^{\vee}}=(x_{\bar{F_i}}:\; i=1,\ldots,m)$, see \cite{HH}. The simplicial complex $\Delta$ is called pure if all facets of it have the same dimension. It is easy to see that $\Delta$ is pure if and only if $I_{\Delta^{\vee}}$ is generated in one degree.
The $k$-algebra $k[\Delta] =S/{I_{\Delta}}$ is called the Stanley-Reisner ring of $\Delta$. We
say that $\Delta$ is Cohen-Macaulay over $k$ if $k[\Delta]$ is Cohen-Macaulay.  It is  known $\Delta$ is a Cohen-Macaulay over $k$ if and only if $I_{\Delta^{\vee}}$ has a linear resolution,  see \cite{ER}.  Since every Cohen-Macaulay simplicial complex is pure, in this section, we consider only pure simplicial complexes.

The simplicial complex $\Delta$ is called shellable if its facets can be ordered
$F_1,F_2,\dots,F_m$ such that, for all $2\leq i\leq m$, the subcomplex $\langle F_1,\ldots,F_{i-1}\rangle \cap \langle F_i\rangle$ is pure of dimension $\dim(F_i)-1$.

For the simplicial complexes $\Delta_1$ and $\Delta_2$ defined on disjoint vertex sets, the join of $\Delta_1$ and $\Delta_2$ is $\Delta_1*\Delta_2=\{F\cup G\; :\; F\in\Delta_1,\; G\in\Delta_2\}$. For a face $F$ in $\Delta$, the link, deletion and star of $F$ in $\Delta$ are respectively, denoted by
$\link_\Delta F$, $\Delta\setminus F$ and $\star_\Delta F$ and are defined by
$\link_\Delta F = \{G\in \Delta \; :\; F\cap G =\varnothing,\; F\cup G\in \Delta\}$,  $\Delta\setminus F=\{G\in\Delta\;:\; F\nsubseteq G\}$ and $\star_\Delta F= \langle F\rangle*\link_\Delta F$.

A face $F$ in $\Delta$ is called a shedding face if every face $G$ of $\star_\Delta F$ satisfies the following exchange property: for every $i\in F$ there is a $j\in [n]\setminus G$ such that $(G\cup\{j\})\setminus\{i\}$ is a face of $\Delta$.  A simplicial complex $\Delta$ is recursively defined to be $k$-decomposable if either $\Delta$ is a simplex or else has a shedding face $F$ with $\dim(F)\leq k$ such that both $\Delta\setminus F$ and $\link_\Delta F$ are $k$-decomposable. $0$-decomposable simplicial complexes are called vertex decomposable.

It is clear that $x_{\bar{F_i}}$ and $x_{\bar{F_j}}$ are adjacent in $G_{I_{\Delta^{\vee}}}$ if and only if $F_i$ and $F_j$  are connected  in codimension one, i.e, $|F_i\cap F_j|=|F_i|-1$. A simplicial complex $\Delta$ is called connected in codimension one or  strongly connected if for any two facets $F$  and $G$ of $\Delta$ there exists a sequence of facets $F = F_0,  F_1, \ldots, F_{q-1}, F_q = G$ such that  $F_i$ and  $ F_{i+1}$ is connected in codimension one for each $i=1,\ldots,q-1$ . Hence we have the following:
\begin{Lemma}\label{cd1-c}{\em
A  simplicial complex $\Delta$ is connected in codimension one if and only if  $G_{I_{\Delta^{\vee}}}$ is a connected graph.}
\end{Lemma}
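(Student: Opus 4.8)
The plan is to treat Lemma \ref{cd1-c} as a straight translation between two notions of connectivity via the labelling bijection $F_i \mapsto x_{\bar F_i}$ between the facets of $\Delta$ and the vertices of $G_{I_{\Delta^{\vee}}}$. The only substantive ingredient is the adjacency criterion recorded just before the lemma, namely that $x_{\bar F_i}$ and $x_{\bar F_j}$ are adjacent in $G_{I_{\Delta^{\vee}}}$ if and only if $F_i$ and $F_j$ are connected in codimension one; once this is secured, the equivalence is pure bookkeeping.

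First I would verify the adjacency criterion carefully. Two vertices $x_{\bar F_i}$ and $x_{\bar F_j}$ are adjacent exactly when $x_a\, x_{\bar F_i} = x_b\, x_{\bar F_j}$ for some single variables $x_a, x_b$. Applying Remark \ref{001} to the supports $\bar F_i$ and $\bar F_j$, there is a monomial $w$ with $x_a = w\, x_{\bar F_j \setminus \bar F_i}$ and $x_b = w\, x_{\bar F_i \setminus \bar F_j}$. Since $\bar F_j \setminus \bar F_i = F_i \setminus F_j$ and $\bar F_i \setminus \bar F_j = F_j \setminus F_i$, and since $\Delta$ is pure so that $|F_i| = |F_j|$ and hence $k := |F_i \setminus F_j| = |F_j \setminus F_i|$, the requirement $\deg x_a = \deg x_b = 1$ gives $\deg w + k = 1$. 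The option $k=0$ forces $F_i = F_j$ (a loop, excluded), so $k=1$ and $w=1$; thus the facets differ in exactly one element, i.e. $|F_i \cap F_j| = |F_i| - 1$, which is codimension-one adjacency.

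With the criterion in hand, both implications unwind the definitions. For the forward direction, suppose $\Delta$ is connected in codimension one and let $x_{\bar F}, x_{\bar G}$ be two vertices. By definition there is a sequence of facets $F = F_0, F_1, \ldots, F_q = G$ with consecutive terms connected in codimension one; the criterion turns each pair $\{x_{\bar F_t}, x_{\bar F_{t+1}}\}$ into an edge of $G_{I_{\Delta^{\vee}}}$, so $x_{\bar F_0}, \ldots, x_{\bar F_q}$ is a walk joining $x_{\bar F}$ to $x_{\bar G}$, and the graph is connected. Conversely, if $G_{I_{\Delta^{\vee}}}$ is connected, any two facets $F, G$ yield vertices joined by a path $x_{\bar F} = x_{\bar F_0}, \ldots, x_{\bar F_q} = x_{\bar G}$, and the criterion converts each edge back into a codimension-one adjacency, producing the required facet chain.

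The main obstacle, such as it is, lies entirely in the degree bookkeeping of the criterion above: it is precisely the purity of $\Delta$ (equivalently, that $I_{\Delta^{\vee}}$ is generated in a single degree) that forces $|F_i \setminus F_j| = |F_j \setminus F_i|$ and hence single-variable multipliers, ruling out every adjacency except the codimension-one one. After that, no real work remains, since ``connected in codimension one'' and ``connected as a graph'' are literally the same connectivity relation transported across the bijection $F_i \mapsto x_{\bar F_i}$.
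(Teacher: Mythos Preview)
Your proposal is correct and follows the same route as the paper: the paper states the adjacency criterion (``$x_{\bar F_i}$ and $x_{\bar F_j}$ are adjacent in $G_{I_{\Delta^{\vee}}}$ if and only if $F_i$ and $F_j$ are connected in codimension one'') as ``clear'' just before the lemma and then records Lemma~\ref{cd1-c} as an immediate consequence of the definitions, with no further proof. Your write-up simply makes explicit, via Remark~\ref{001} and the purity assumption, the degree count behind that criterion, and then carries out the same bijection-of-paths argument; nothing in your approach departs from the paper's.
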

 For facets  $F$ and $G$  of $\Delta$, we introduced a subcomplex $\Delta^{(F,G)}=\langle L\in \mathcal{F}(\Delta):\; F\cap G\subset L\rangle$. It is easy to see that $\Delta^{(F,G)}$ is connected in codimension one if and only if $G_{I_{\Delta^{\vee}}}^{(x_{\bar F},x_{\bar G})}$ is a connected graph. Hence by Proposition \ref{009}  we have:
 \begin{Corollary}\label{delta fg}{\em
 Let $\Delta $ be a simplicial complex on vertex set $[n]$.  Then $I_{\Delta^{\vee}}$ has linear relations if and only if $\Delta^{(F,G)}$ is connected in codimension one for all facets $F$ and $G$ of $\Delta$.}
 \end{Corollary}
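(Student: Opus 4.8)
The plan is to obtain the corollary from Proposition \ref{009} by translating the graph-theoretic condition on $G_{I_{\Delta^{\vee}}}$ into the language of $\Delta$. Because we work only with pure complexes here, $I_{\Delta^{\vee}}$ is generated in a single degree, so Proposition \ref{009} applies: $I_{\Delta^{\vee}}$ has linear relations if and only if the induced subgraph $G_{I_{\Delta^{\vee}}}^{(u,v)}$ is connected for every pair of generators $u,v$. Since the generators of $I_{\Delta^{\vee}}$ are exactly the monomials $x_{\bar F}$ with $F$ a facet of $\Delta$, it suffices to show that for $u=x_{\bar F}$ and $v=x_{\bar G}$ the subgraph $G_{I_{\Delta^{\vee}}}^{(u,v)}$ is connected precisely when $\Delta^{(F,G)}$ is connected in codimension one.

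First I would identify the vertex sets. The support of $x_{\bar F}$ is $\bar F$, so $F(u)\cup F(v)=\bar F\cup\bar G=[n]\setminus(F\cap G)$ by De Morgan. A generator $x_{\bar L}$ is a vertex of $G_{I_{\Delta^{\vee}}}^{(u,v)}$ exactly when $\bar L\subseteq F(u)\cup F(v)$, i.e. when $\bar L\cap(F\cap G)=\varnothing$, i.e. when $F\cap G\subseteq L$. Thus the vertices of $G_{I_{\Delta^{\vee}}}^{(u,v)}$ correspond bijectively to the facets $L$ of $\Delta$ with $F\cap G\subseteq L$, which are precisely the facets of $\Delta^{(F,G)}$.

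Next I would match the edges. By the observation recorded just before Lemma \ref{cd1-c}, $x_{\bar L}$ and $x_{\bar{L'}}$ are adjacent exactly when $L$ and $L'$ meet in codimension one. Hence the induced subgraph is, up to the removal of redundant type-$(i)$ edges, the codimension-one adjacency graph on the facets of $\Delta^{(F,G)}$, whose connectivity is by definition the assertion that $\Delta^{(F,G)}$ is connected in codimension one. Running this equivalence over all facets $F,G$ and feeding it into Proposition \ref{009} yields the claim.

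There is no genuine obstacle; the proof is a dictionary. The only point worth checking is that the edge-deletion used to define $G_I$ does not interfere with connectivity: if an induced subgraph $G_{I_{\Delta^{\vee}}}^{(u,v)}$ contains two vertices $u_{i_1},u_{i_2}$ of a type-$(i)$ triangle with third vertex $u_{i_3}$, then $F(u_{i_3})\subseteq F(u_{i_1})\cup F(u_{i_2})\subseteq F(u)\cup F(v)$, so $u_{i_3}$ lies in the subgraph as well and the two surviving edges keep $u_{i_1}$ and $u_{i_2}$ connected. Thus connectivity of every induced subgraph may be read off either the reduced graph or the full codimension-one adjacency graph, and the translation above is legitimate.
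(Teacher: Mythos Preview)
Your proposal is correct and follows exactly the same route as the paper: invoke Proposition~\ref{009} and then translate the connectivity of $G_{I_{\Delta^{\vee}}}^{(x_{\bar F},x_{\bar G})}$ into the codimension-one connectivity of $\Delta^{(F,G)}$. The paper disposes of the translation with the phrase ``it is easy to see,'' whereas you actually carry out the De~Morgan computation on vertex sets and verify that the removal of type-$(i)$ triangle edges does not disturb connectivity of induced subgraphs; this extra care is welcome but does not constitute a different approach.
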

 Suppose that $\Delta$ is a simplicial complex of dimension $d$, i.e, $|F_i|=d+1$ for all $i$. We associate to $\Delta$ a simple
graph $G_{\Delta}$  whose vertices are
labeled by the facets of $\Delta$. Two vertices $F_i$ and $F_j$ are
adjacent if  $F_i$ and $F_j$ are connected in codimension one.  If $F_i$ and $F_j$ are adjacent in $G_{\Delta}$, then $\mid F_i \cap  F_{i+1}\mid =d$. It is easy to see that  $ \mid\bar{F_i} \cap\bar{F_j}\mid=n-d-2$. Therefor $x_{\bar{F_i}}$ and $x_{\bar{F_j}}$ is adjacent in $G_{I_{{\Delta}^{\vee}}}$ and, hence, $G_{\Delta} \cong G_{I_{{\Delta}^{\vee}}}$.

Now assume that $G_{\Delta}\cong G_{I_{{\Delta}^{\vee}}}$ is a line.  Proposition \ref{line}, implies that  $I_{{\Delta}^{\vee}}$ has a linear resolution if and only if for any $1\leq j\leq k \leq i \leq m$, $\bar{ F_k} \subseteq \bar{F_i} \bigcup \bar{F_j}$. By  Eagon-Reiner \cite{ER}, we have  the following:
\begin{Corollary}\label{d-line}{\em
Let $\Delta=< F_1, \ldots,  F_m>$ be a pure simplicial complex. If  $G_{\Delta}=F_1, F_2, \ldots, F_m$ is a line, then $\Delta $ is a Cohen-Macaulay  if and only if   $F_i \cap F_j \subseteq F_k$ for any $1\leq j\leq k \leq i \leq m$. Moreover in this case  the following conditions are equivalent:
\begin{itemize}
\item[(a)] $\Delta^{(F,G)}$ is connected in codimension one for all facets $F$ and $G$ in $\Delta$.
\item[(b)] $\Delta$ is Cohen-Macaulay.
\item[(c)] $\Delta$ is shellabe
\item[(d)] $\Delta$ is vertex decomposable simplicial complex.
\end{itemize}}
\end{Corollary}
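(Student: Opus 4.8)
The plan is to transport everything to the Alexander dual ideal $I := I_{\Delta^{\vee}}$ and then apply Proposition \ref{line}. Since $\Delta$ is pure, say of dimension $d$, the ideal $I$ is generated in the single degree $n-d-1$, with minimal generators $u_i = x_{\bar{F_i}}$; and by the discussion preceding the statement we have $G_I \cong G_{\Delta}$, which is the line $u_1, u_2, \ldots, u_m$. Thus $G_I$ is a line (in particular a tree), so both Proposition \ref{line} and Theorem \ref{tree} apply to $I$.

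First I would establish the main equivalence. By Eagon--Reiner \cite{ER}, $\Delta$ is Cohen--Macaulay if and only if $I$ has a linear resolution. Since $G_I$ is a line, Proposition \ref{line} says this is equivalent to $F(u_k) \subseteq F(u_i) \cup F(u_j)$ for all $1 \leq j \leq k \leq i \leq m$. Because $F(u_t) = \bar{F_t} = [n] \setminus F_t$, this condition reads $[n] \setminus F_k \subseteq ([n] \setminus F_i) \cup ([n] \setminus F_j) = [n] \setminus (F_i \cap F_j)$, and taking complements in $[n]$ turns it into $F_i \cap F_j \subseteq F_k$. This is exactly the stated characterization of Cohen--Macaulayness.

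For the ``moreover'' part I would assemble a dictionary between algebraic properties of $I$ and combinatorial properties of $\Delta$, and then use that for a line all the relevant algebraic properties of $I$ coincide. Concretely: (a) translates to ``$I$ has linear relations'' by Corollary \ref{delta fg}; (b) translates to ``$I$ has a linear resolution'' by Eagon--Reiner; (d) translates to ``$I$ is variable-decomposable'' by \cite[Theorem 2.10]{RY}; and (c) translates to ``$I$ has linear quotients'' by the standard Alexander-duality correspondence between shellability of $\Delta$ and linear quotients of $I_{\Delta^{\vee}}$. Now, a line is a tree, so Theorem \ref{tree} gives that $I$ has a linear resolution if and only if it has linear relations, while Proposition \ref{line} (equivalently Theorem \ref{019}) gives that having a linear resolution is equivalent to being variable-decomposable and to having linear quotients. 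Hence all four algebraic conditions are equivalent for $I$, and translating back through the dictionary shows that (a), (b), (c), (d) are equivalent.

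The routine ingredients are the complementation identity $\bar{F_k} \subseteq \bar{F_i}\cup\bar{F_j} \iff F_i\cap F_j\subseteq F_k$ and the bookkeeping of the dictionary. The only item not already contained in the excerpt is the equivalence ``$\Delta$ shellable $\iff$ $I_{\Delta^{\vee}}$ has linear quotients'', so the main point to be careful about is to invoke this duality correctly, and to note that, since $\Delta$ is pure, shellability here is pure shellability, matching condition (c). Everything else is a direct application of Proposition \ref{line}, Theorem \ref{tree}, Corollary \ref{delta fg}, and the Eagon--Reiner theorem.
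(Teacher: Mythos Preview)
Your proposal is correct and mirrors the paper's own argument: the paper deduces the corollary from the preceding sentence, applying Proposition~\ref{line} to $I_{\Delta^{\vee}}$ together with Eagon--Reiner and the complementation $\bar{F_k}\subseteq\bar{F_i}\cup\bar{F_j}\iff F_i\cap F_j\subseteq F_k$, and for the ``moreover'' part relies on the same dictionary (Corollary~\ref{delta fg}, \cite{ER}, \cite{RY}, and the shellability/linear-quotients duality) that you spell out. Your write-up is in fact more detailed than the paper's, which leaves the equivalence of (a)--(d) implicit.
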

Also  a consequence of Theorem \ref{02}, we have:
\begin{Corollary}\label{d-cycle}{\em
Let $\Delta=< F_1, \ldots,  F_m>$ be a pure simplicial complex. If $G_{\Delta} \cong C_m$, then $\Delta$ is a Cohen-Macaulay  if and only if $m=n$ and with a suitable relabeling of facets $F_i$, we have  $i \in F_i \cap F_{i+1}$ and $i \notin F_j$ for all $j \neq i, i+1$ ( $F_{m+1}=F_1$). Moreover in this case $\Delta$ is shellabe and vertex decomposable simplicial complex.}
\end{Corollary}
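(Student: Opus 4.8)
The plan is to pass to the Alexander dual ideal $I = I_{\Delta^{\vee}}$ and to read off the claim from Theorem \ref{02}. Writing $u_i = x_{\bar{F_i}}$ for the minimal generators of $I$, we have $G(I) = \{u_1, \ldots, u_m\}$, and, as recorded immediately before the statement, $G_I = G_{I_{\Delta^{\vee}}} \cong G_{\Delta} \cong C_m$. Since $\Delta$ is pure, $I$ is generated in one degree, and by the Eagon--Reiner theorem \cite{ER} the complex $\Delta$ is Cohen--Macaulay if and only if $I$ has a linear resolution. Hence (for $m \geq 4$, as required by Theorem \ref{02}) $\Delta$ is Cohen--Macaulay if and only if condition (b) of Theorem \ref{02} holds for $I$: namely $m = n$ together with the stated divisibility pattern on the $u_j$.

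What remains is a dictionary translation of condition (b). For any $i \in [n]$ and any facet $F_j$ one has $x_i \mid u_j \Longleftrightarrow i \in \bar{F_j} \Longleftrightarrow i \notin F_j$. After relabelling the variables (equivalently, the facets) so that the cycle reads $u_1, u_2, \ldots, u_m, u_1$, the clause ``$x_i \mid u_j$ for all $j \neq i, i+1$'' becomes ``$i \notin F_j$ for all $j \neq i, i+1$'', which is half of the asserted condition. For the other half I would extract from the proof of Theorem \ref{02} the accompanying non-divisibilities $x_i \nmid u_i$ and $x_i \nmid u_{i+1}$ (shown there as $x_{t_i} \nmid u_i$ and $x_{t_i} \nmid u_{i+1}$); these translate into $i \in F_i$ and $i \in F_{i+1}$, i.e. $i \in F_i \cap F_{i+1}$, with the cyclic convention $F_{m+1} = F_1$ matching $x_{t_{m+1}} = x_{t_1}$. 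Together these equivalences turn (b) into exactly ``$m = n$ and, after relabelling, $i \in F_i \cap F_{i+1}$ and $i \notin F_j$ for all $j \neq i, i+1$'', and the argument is reversible.

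For the final assertion, note that when these equivalent conditions hold Theorem \ref{02} also yields that $I$ is variable-decomposable and has linear quotients. I would then invoke the two standard correspondences between an Alexander dual ideal and its complex: $I_{\Delta^{\vee}}$ is variable-decomposable if and only if $\Delta$ is vertex decomposable \cite{RY}, and $I_{\Delta^{\vee}}$ has linear quotients if and only if $\Delta$ is shellable (see \cite{HH}). Since $\Delta$ is pure, it is therefore pure shellable and pure vertex decomposable, as claimed. The step I expect to be the main obstacle is the second paragraph: one must keep the cyclic indexing consistent and, above all, justify the membership $i \in F_i \cap F_{i+1}$ rather than only the non-membership $i \notin F_j$ for $j \neq i, i+1$, so that one cannot simply quote the displayed condition (b) but must also use the non-divisibilities established inside its proof.
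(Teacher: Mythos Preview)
Your approach is correct and is exactly what the paper intends: the corollary is stated without proof, merely as ``a consequence of Theorem \ref{02}'', so the deduction via Eagon--Reiner and the dictionary $x_i\mid u_j \Leftrightarrow i\notin F_j$ is precisely the implicit argument. Your care in noting that condition (b) of Theorem \ref{02} literally asserts only the divisibilities $x_i\mid u_j$ for $j\neq i,i+1$, while the corollary also requires $i\in F_i\cap F_{i+1}$, is well placed; these non-divisibilities are indeed established inside the proof of (a)$\Rightarrow$(b), and alternatively follow from $G_I\cong C_n$ together with (b) by a degree count (each $u_j$ is squarefree, divisible by $n-2$ variables, and cannot have degree $n-1$ since then $G_I$ would be complete).
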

Again a corollary of Theorem \ref{019},  Theorem \ref{011} and Theorem \ref{tree} we have:
\begin{Corollary}\label{d-tree}{\em
Let $\Delta=< F_1, \ldots,  F_m>$ be a pure simplicial complex.  If $G_{\Delta}$ is a tree, then the following conditions are equivalent:
\begin{itemize}
\item[(a)] $\Delta^{(F,G)}$ is connected in codimension one for all facets $F$ and $G$ in $\Delta$.
\item[(b)] if $F=F_1, F_2,\ldots,F_s=G$ is a unique path in $G_{\Delta}$, then $F_i\cap F_k\subset F_j$ for all $1\leq i\leq j\leq k\leq s$.
\item[(c)] $\Delta$ is Cohen-Macaulay.
\item[(d)] $\Delta$ is shellabe
\item[(e)] $\Delta$ is vertex decomposable.
\item[(f)] $G_{\Delta} \cong \Delta_{I_{{\Delta}^{\vee}}}$.
\end{itemize}}
\end{Corollary}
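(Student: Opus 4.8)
The plan is to translate every condition on $\Delta$ into the corresponding condition on the squarefree monomial ideal $I := I_{\Delta^\vee}$, and then invoke the tree-theorems of the previous sections. Write $G(I) = \{u_1, \ldots, u_m\}$ with $u_i = x_{\bar{F_i}}$, so that $F(u_i) = \bar{F_i}$; because $\Delta$ is pure, $I$ is generated in one degree. As observed just before the statement, $G_\Delta \cong G_{I_{\Delta^\vee}} = G_I$ under $F_i \mapsto u_i$, so the hypothesis that $G_\Delta$ is a tree is exactly the hypothesis that $G_I$ is a tree. Hence Theorems \ref{tree}, \ref{019} and \ref{011} all apply to $I$, and the proof reduces to matching each of (a)--(f) with a clause of one of these theorems, every clause being equivalent to ``$I$ has a linear resolution''; that common value then links all six conditions.

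First I would record the dictionary whose anchor is (c): by Eagon--Reiner, $\Delta$ is Cohen--Macaulay if and only if $I$ has a linear resolution. By the remark preceding Corollary \ref{delta fg}, $\Delta^{(F,G)}$ is connected in codimension one if and only if $G_I^{(x_{\bar F}, x_{\bar G})}$ is connected, and ranging over all facets this is clause (c) of Theorem \ref{tree}; so (a) is equivalent to (c). For (b), taking complements turns $F_i \cap F_k \subseteq F_j$ into $\bar{F_j} \subseteq \bar{F_i} \cup \bar{F_k}$, i.e.\ $F(u_j) \subseteq F(u_i) \cup F(u_k)$, and since the unique path $F = F_1, \ldots, F_s = G$ in the tree $G_\Delta$ corresponds to the unique path $x_{\bar F} = u_1, \ldots, u_s = x_{\bar G}$ in $G_I$, clause (b) is precisely clause (d) of Theorem \ref{tree}; so (b) is equivalent to (c). Finally (f) asserts $G_I \cong \Delta_I$, which by Theorem \ref{011} holds exactly when $I$ has a linear resolution, giving (f) equivalent to (c).

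It remains to handle (d) and (e), where I would invoke the two Alexander-duality dictionaries. The complex $\Delta$ is vertex decomposable if and only if $I = I_{\Delta^\vee}$ is variable-decomposable (Rahmati--Yassemi, quoted in the introduction), and $\Delta$ is shellable if and only if $I_{\Delta^\vee}$ has linear quotients (the standard shellability/linear-quotients duality). Because $\Delta$ is pure and $I$ is generated in one degree, the shellings and decompositions involved are automatically pure, so these read as pure shellability and pure vertex decomposability. Theorem \ref{019} asserts that for $I$ with $G_I$ a tree, having a linear resolution, being variable-decomposable, and having linear quotients are all equivalent; transcribing back through the two dictionaries yields (c), (d) and (e) equivalent.

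The logic is purely a matter of transcription, so the main subtlety is making the duality statements precise rather than chaining the equivalences. The vertex-decomposable/variable-decomposable correspondence is quoted verbatim, but the shellable/linear-quotients correspondence needs a correct citation together with the (easy) check that working inside the pure setting introduces no nonpurity; once both dictionaries are fixed, stringing (a)--(f) together through the common value ``$I$ has a linear resolution'' is immediate.
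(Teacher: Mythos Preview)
Your proposal is correct and follows exactly the route the paper intends: the paper gives no proof beyond ``Again a corollary of Theorem \ref{019}, Theorem \ref{011} and Theorem \ref{tree},'' and you have faithfully unpacked this by transporting each condition (a)--(f) across the isomorphism $G_\Delta \cong G_{I_{\Delta^\vee}}$ and the standard Alexander-duality dictionaries (Eagon--Reiner, Rahmati--Yassemi, shellable $\Leftrightarrow$ linear quotients) to land on the hypotheses of those three theorems. Your only added care---flagging that the shellability/linear-quotients correspondence deserves a precise citation---is appropriate, since the paper itself leaves that implicit.
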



\end{document}